\newtheorem*{rem}{Remark}
\newtheorem{theo}{Theorem}[section]
\newtheorem*{theo*}{Theorem}
\newtheorem{prop}[theo]{Property}
\newtheorem{defi}[theo]{Definition}
\newtheorem{lem}[theo]{Lemma}
\newcommand{\Dz}{\mathcal{D}^{(0)}}
\newcommand{\Dhz}{\hat{\mathcal{D}}^{(0)}}
\newcommand{\Dh}{\hat{\mathcal{D}}}
\newcommand{\Ddag}{{\mathcal{D}}^\dagger}
\newcommand{\Q}{\mathbb{Q}}
\newcommand{\calS}{\mathcal{S}}
\newcommand{\calT}{\mathcal{T}}
\newcommand{\calX}{\mathcal{X}}
\newcommand{\calY}{\mathcal{Y}}
\newcommand{\calZ}{\mathcal{Z}}
\newcommand{\calA}{\mathcal{A}}
\newcommand{\calB}{\mathcal{B}}
\newcommand{\calD}{\mathcal{D}}
\newcommand{\calO}{\mathcal{O}}
\newcommand{\calE}{\mathcal{E}}
\newcommand{\calF}{\mathcal{F}}
\newcommand{\calL}{\mathcal{L}}
\newcommand{\calM}{\mathcal{M}}
\newcommand{\calP}{\mathcal{P}}
\newcommand{\calQ}{\mathcal{Q}}
\newcommand{\calR}{\mathcal{R}}
\title{Fourier-Mukaï transform for $\Dhz$-modules over formal abelian schemes}
\author{Florian \bsc{Viguier}}
\date{\today}
\begin{document}

\maketitle

\selectlanguage{english}
\begin{abstract}
In 1996, Rothstein and Laumon simultaneously constructed a Fourier-Mukai transform for $\calD$-modules over a locally noetherian base of characteristic 0. This functor induces an equivalence of categories between quasi-coherent sheaves of $\calD$-modules over an abelian variety $A$ and quasi-coherent sheaves of $\calO$-modules over its universal vectorial extension $A^\natural$.
In this article, we define a Fourier-Mukai transform for $\calD$-modules on an abelian formal scheme $\calA/\calS=Spf(V)$, where $V$ is a discrete valuation ring, and we discuss the extension of the classical results of Fourier-Mukai transform to this arithmetic case.
\end{abstract}

\tableofcontents

\section*{Introduction}

The Fourier-Mukai transform, first defined by Shigeru Mukai in \cite{Muk} for $\calO$-modules on an abelian variety over an algebraically closed field, is a very usefull tool in the study of quasi-coherent sheaves over an abelian variety. Thanks to the independant work of Rothstein \cite{Rot96} and Laumon \cite{Lau}, it has been extended into a functor acting on $\calD$-modules on an abelian variety in characteristic zero. We will here recall the way it is constructed in \cite{Lau}, as it will be the easier construction to adapt for arithmetic varieties.

First, given an abelian variety $A$ over a base $S$ of characteristic zero, one constructs the functor from the category of noetherian $S$-scheme to the one of abelian groups:
\[Pic^\natural(A\times\bullet/\bullet):T\mapsto Pic^\natural(A\times T/T),\]
where $Pic^\natural(X/T)$ is the abelian group of sheaves of $\calD_{X/T}$-modules $\calO_X$-invertible $\calL$ such that
\[m^\flat\calL\simeq p_1^\flat\mathcal{L}\otimes p_2^\flat\mathcal{L},\]
with $m,p_1,p_2:X\times_T X\rightarrow X$ being respectively the multiplication and the canonical projections and $f^\flat$ the inverse image functor of $\calD$-modules without the shift. Such a sheaf $\calL$ is said to verify the theorem of the square. Note that the group $Pic^\natural(A/S)$ is also the group of rigid extensions of $A$ (see \cite{Ma-Me} for more details).

\noindent The functor $Pic^\natural(A\times\bullet/\bullet)$ is representable by an abelian group scheme over $S$ of dimension $2dim(A)$ and denoted $A^\natural$. It is the universal vectorial extension of $A$. The representability of this functor leads to the construction of a universal element $\calP$ of $Pic^\natural(A\times A^\natural/A^\natural)$, corresponding to the morphism $id_{A^\natural}\in Hom(A^\natural,A^\natural)$. The sheaf $\calP$ is thus a $\calD_{A\times A^\natural/A^\natural}$-module, $\calO_{A\times A^\natural}$-invertible, satisfying the theorem of the square, and it is called the Poincar\'e sheaf. Note that it is not the same Poincar\'e sheaf as the one defined on the dual abelian variety. These sheaves are linked together in a way that will be explained later.

\noindent The classical Fourier-Mukai transform is then defined as the functor $\calF$ from the derived category of quasi-coherent $\calD_{A/S}$-modules to the derived category of quasi-coherent $\calO_{A^\natural}$-modules as follows
\[\mathcal{F}(\mathcal{E}^\cdot)=Rp_*^\natural(\mathcal{P}\overset{\mathbb{L}}{\otimes}p^*\mathcal{E}^\cdot),\]
where $p:A\times A^\natural \rightarrow A$ and $p^\vee: A\times A^\natural \rightarrow A^\natural$ are the canonical projections. One can define in the same way the dual Fourier-Mukai transform $\mathcal{F}^\natural:D^b_{qcoh}(\mathcal{O}_{A^\natural})\rightarrow D^b_{qcoh}(\mathcal{D}_{A/S})$ as follows
\[\mathcal{F}^\natural(\mathcal{E}^\cdot)=Rp_*(\mathcal{P}\overset{\mathbb{L}}{\otimes}p^{\natural*}\mathcal{E}^\cdot).\]

The most important property of the Fourier-Mukai transform for $\calD$-modules is the following

\begin{theo*}[Rothstein-Laumon]

The Fourier-Mukai transform induces an equivalence of categories between $D^b_{qcoh}(\calD_{A/S})$ and $D^b_{qcoh}(\calO_{A^\natural})$.

More precisely, it is involutive, in the sense that
\[\calF^\natural\circ\calF\simeq\langle-1\rangle^!\bullet[-dim(A)]\]
and
\[\calF\circ\calF^\natural\simeq\langle-1\rangle^{\natural*}\bullet[-dim(A)],\]
with $\langle-1\rangle$ and $\langle-1\rangle^\natural$ being the inverse morphisms of $A$ and $A^\natural$ respectively.

\end{theo*}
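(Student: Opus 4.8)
The plan is to follow Mukai's original argument \cite{Muk}, transposed to $\calD$-modules as in \cite{Lau}: realise $\calF$ and $\calF^\natural$ as integral transforms with kernel the Poincar\'e sheaf $\calP$, compose them, and identify the resulting kernel. Two preliminary points should be settled first. One is that $\calF$ and $\calF^\natural$ do send $D^b_{qcoh}$ into $D^b_{qcoh}$: this follows from the properness of $A$, hence of the projection $p\colon A\times A^\natural\to A$ and of the fibres of $p^\natural$, which bounds the cohomological amplitude of the direct images involved, together with the $\calO$-coherence of $\calP$. The other is to record the functoriality of $\calP$: it satisfies the theorem of the square in the $A$-direction (this is the very definition of $Pic^\natural(A\times A^\natural/A^\natural)$), and, by the biextension structure of the Poincar\'e sheaf, it is additive in the $A^\natural$-direction as well.

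Next I would realise the composition as a single integral transform. On the triple product $A\times A^\natural\times A$ (the outer factors being two copies of $A$), with the evident projections $q_{12},q_{23}$ to $A\times A^\natural$ and $q_{13}$ to $A\times A$, flat base change along the cartesian squares relating these maps, together with the projection formula for inverse and direct images, gives for every $\calE^\cdot\in D^b_{qcoh}(\calD_{A/S})$ a natural isomorphism $\calF^\natural\big(\calF(\calE^\cdot)\big)\simeq Rr_{2,*}\big(\calK\overset{\mathbb L}{\otimes}r_1^\flat\calE^\cdot\big)$, where $r_1,r_2\colon A\times A\to A$ are the projections and $\calK:=Rq_{13,*}\big(q_{12}^\flat\calP\overset{\mathbb L}{\otimes}q_{23}^\flat\calP\big)\in D^b_{qcoh}(\calD_{A\times A/S})$; that is, $\calF^\natural\circ\calF$ is the integral transform on $A$ with kernel $\calK$, and symmetrically $\calF\circ\calF^\natural$ is the integral transform on $A^\natural$ with a kernel $\calK^\natural$ built in the same way from $\calP$.

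The computation of $\calK$ is where the special features of $\calP$ enter. By the theorem of the square in the $A$-direction, $q_{12}^\flat\calP\overset{\mathbb L}{\otimes}q_{23}^\flat\calP$ is identified with the pull-back of $\calP$ along $(a,\xi,a')\mapsto(a+a',\xi)$; pushing this forward along the middle factor $A^\natural$ yields $\calK\simeq\nu^\flat\big(Rp_*\calP\big)$, with $\nu\colon A\times A\to A$ the group law and $p\colon A\times A^\natural\to A$ the projection. The whole statement thus reduces to the arithmetic analogue of Mukai's vanishing (``$\mathrm{WIT}$'') lemma: $Rp_*\calP\simeq i_{0,+}\calO_S[-\dim A]$ in $D^b_{qcoh}(\calD_{A/S})$, where $i_0\colon S\hookrightarrow A$ is the unit section, equivalently $\calF^\natural(\calO_{A^\natural})\simeq i_{0,+}\calO_S[-\dim A]$; and dually $Rp^\natural_*\calP=\calF(\calO_A)\simeq\calO_{0^\natural}[-\dim A]$, the skyscraper at the origin of $A^\natural$. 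Granting this, $\nu^\flat\big(i_{0,+}\calO_S\big)$ is the $\calD$-module direct image of the structure sheaf of $\nu^{-1}(0)=\{(a,-a)\}$, the graph of $\langle-1\rangle$, so $\calK$ is that direct image shifted by $[-\dim A]$, and the integral transform it defines is exactly $\langle-1\rangle^!\bullet[-\dim A]$; this is the first formula, and the second follows in the same way from the additivity in the $A^\natural$-direction and the second half of the lemma. Both compositions being then invertible auto-equivalences, $\calF$ and $\calF^\natural$ are equivalences of derived categories, mutually quasi-inverse up to inversion and shift.

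The main obstacle is the key lemma, and inside it the computation at the origin. Its vanishing part is of Mukai type and comparatively soft: for $a\neq 0$, $\calP|_{\{a\}\times A^\natural}$ is the pull-back along the affine morphism $A^\natural\to\widehat A$ of a non-trivial degree-zero line bundle on the dual abelian variety, and since the push-forward of $\calO_{A^\natural}$ to $\widehat A$ carries an exhaustive filtration with free $\calO_{\widehat A}$-module quotients, its cohomology vanishes; dually, $H^\cdot_{\mathrm{dR}}\big(A,(\calL,\nabla)\big)=0$ for every non-trivial $(\calL,\nabla)\in Pic^\natural(A)$, so $\calF(\calO_A)$ is supported at $0^\natural$; cohomology-and-base-change for the proper maps $p$ and $p^\natural$ then upgrades these fibrewise vanishings to vanishing of the complexes away from the origin. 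The delicate point is to pin down the complex \emph{at} the origin: one must compute $R\Gamma(A^\natural,\calO_{A^\natural})$, respectively the relative de Rham cohomology of $\calP$, together with the $\calD_{A/S}$-structure it inherits from the connection on $\calP$, and show that it is concentrated in degree $\dim A$ and isomorphic to the $\delta$-module $i_{0,+}\calO_S$ (respectively that $Rp^\natural_*\calP$ is genuinely the shifted skyscraper, not merely supported at $0^\natural$ set-theoretically). This is precisely where the structure of $A^\natural$ as the \emph{universal} vectorial extension is indispensable: the non-triviality of its extension class forces a Koszul-type cancellation in the spectral sequence attached to the filtration of $\pi_*\calO_{A^\natural}$, producing the required cohomological concentration. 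This step is the technical core of \cite{Rot96} and \cite{Lau}, and it is also the one whose transposition to the formal and arithmetic setting should demand the most care.
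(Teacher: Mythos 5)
Your proposal is correct and follows essentially the same route as the paper's (i.e.\ Laumon's) argument, which the paper recalls and then adapts to the arithmetic case: write the composition as an integral transform, use the theorem of the square (resp.\ the additivity of $\calP$ in the $A^\natural$-direction via $\calP\simeq(id_A\times\theta)^*\calP^\vee$) plus base change to reduce the kernel to $\nu^\flat$ of $p_+\calP$ (resp.\ $m^{\natural*}$ of $p^\natural_+\calP$), invoke the key lemma identifying these push-forwards with the shifted direct image of $\calO_S$ at the origin, and conclude with the anti-diagonal. One minor slip: it is $p^\natural\colon A\times A^\natural\to A^\natural$ (fibres $\simeq A$) that is proper, not $p\colon A\times A^\natural\to A$ (fibres $\simeq A^\natural$, which is only affine over $A^\vee$); preservation of $D^b_{qcoh}$ under $Rp_*$ follows instead from quasi-compactness and finite cohomological dimension.
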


Roughly speaking, a consequence of this theorem is that a complex $\calE^\cdot$ of $\calD$-modules over $A$ is characterized by the solutions of all the twists of $\calE^\cdot$ with invertible sheaves with connexions.

\noindent The goal of any extension of the Fourier-Mukai transform is to preserve this property.

One of the motivations of this work comes from an article of Schnell (\cite{Sch15}) where he uses the Fourier-Mukai transform to establish some structure results for holonomic $\calD$-modules.

In the arithmetic case we fix $V$ a discrete valuation ring of uniformizer $\pi$ with $p=char(V/\pi)>0$, $V_i$ the reduction modulo $\pi^i$ of $V$, $(A_i)$ an inductive system of abelian schemes over the $S_i=Spec(V_i)$ which are compatible with the reductions modulo $\pi^j$ (e.g. the reduction modulo $\pi^i$ of an abelian variety $A$ over $S$) and $\calA=\varinjlim A_i$, which is a formal abelian scheme.

\noindent The goal of this article is to define a Fourier-Mukai transform on $\calA$ and to study it in order to find the expected involutivity formulas, as it has been previously done for $\calO_\calA$-modules in \cite{Vig22}. Note that all the previous constructions will adapt without any problem in the arithmetic case, the only work will be to define the functors correctly. However, the proof of the involutivity formulas strongly requires the characteristic zero hypothesis, so that a lot of work is needed in order to prove results in the arithmetic case.
The main result of this article is theorem \ref{Dhz_equiv} where we prove that our arithmetic Fourier-Mukai functor is essentially surjective.

The first chapter will be dedicated to the definitions of "relative" $\calD$-modules and their fundamental properties. Indeed, as $\calP$ is a $\calD_{A\times A^\natural/A^\natural}$-module, all the $\calD$-modules will be considered as relative to $A^\natural$. It is important to study their behavior, especially their push-forward by the projection $A\times A^\natural\rightarrow A$.

\noindent These discussions will also lead to an interesting version of the seesaw principle for $\calD$-modules (property \ref{D-bascule}).

The second chapter will focus on the definition of the dual abelian variety $\calA^\natural$, the Poincar\'e sheaf $\calP$ and the Fourier-Mukai transform $\calF$ as well as their properties, the most important being the essential surjectivity of $\calF$ (theorem \ref{Dhz_equiv}).

I want to thank Christine Huyghe for the numerous conversations we had on this topic and for all of her precious advices.

\section{Classic and relative $\Dhz$-modules}

For this section, let $V$ be a discrete valuation ring with uniformizer $\pi$, $S=Spec(V)$, for all $i$ $V_i=\faktor{V}{\pi^i V}$ and $S_i=Spec{V_i}$ and $\calS=Spf{S}$. For all $i$ let $X_i$ be a variety over $S_i$ such that $X_{i+1}\times_{S_{i+1}}S_i\simeq X_i$ and $\calX=\varinjlim{X_i}$ is smooth (one can think of the $X_i$ as the reduction modulo $\pi^i$ of a smooth variety $X$ over $S$).

By the end of this article, the $X_i$ will be considered to be abelian schemes, but as the following definitions can be written in a more general context, we will only assume $\calX$ is smooth.

\subsection{Definitions}

The construction of the sheaves $\Dz$ won't be recalled in this article. For more information, see \cite{D_mod1} (chapter 1).
Simply recall that taking a left $\Dz$-module structure on a $\calO$-module $\calE$ is equivalent to take an integrable connexion on $\calE$ (this is the main reason this article will only talk about $\Dz$-modules and not $\mathcal{D}^{(m)}$-modules). Moreover, locally
\[\Gamma(U,\Dz_X)=\left\{\sum_{\underline{k}\text{, finite}} a_{\underline{k}} \underline{\partial}^{\underline{k}}\mid a_{\underline{k}}\in\Gamma(U,\mathcal{O}_X)\right\},\]
where the $\partial_i$ are the derivation associated to a coordinate system $t_i$ of $U$ (note that these coordinates, and so $\Dz_X$, depend on the base $S$ considered).

One can define the module $\Dz_\mathcal{X}$ the same way on the formal scheme $\calX$.

\begin{defi}

Let $\mathcal{X}$ be a formal scheme on $\mathcal{S}$. $\Dhz_{\mathcal{X}/\calS}$ is the $\pi$-adic completion of $\Dz_\mathcal{X}$:
\[\Dhz_{\mathcal{X}/\calS}=\underset{i}{\varprojlim}\faktor{\Dz_\mathcal{X}}{\pi^i}\simeq \underset{i}{\varprojlim}\Dz_{X_i}.\]

\end{defi}

\begin{rem}
	The subscript $\calX/\calS$ is here to point out that $\calX$ is viewed as a $\calS$-formal scheme. Later in this paper we will have to consider the product of two formal schemes $\calX\times\calY$ as a scheme over $\calY$, and so we will have to consider $\Dhz_{\calX\times\calY/\calY}$-modules instead of $\Dhz_{\calX\times\calY/\calS}$-modules. In that case, we will call them 'relative' $\Dhz$-modules.
	
	In the following we may however use the notation $\Dhz_{\mathcal{X}}$ instead of $\Dhz_{\mathcal{X}/\calS}$ if there is no confusion.
\end{rem}

Locally,
\[\Gamma(\mathcal{U},\Dhz_{\mathcal{X}/\calS})=\left\{\sum_{\underline{k}} a_{\underline{k}} \underline{\partial}^{\underline{k}}\mid a_{\underline{k}}\in\Gamma(U,\mathcal{O}_\calX)\text{ and } \underset{|\underline{k}|\rightarrow\infty}{v_\pi(a_{\underline{k}})\rightarrow0}\right\}.\]

\subsubsection*{Quasi-coherent $\Dhz$-modules}

We will take the following definition for quasi-coherent sheaves of $\Dhz$-modules over $\calX$:

\begin{defi}

Let $\mathcal{E}^\cdot\in D^-(\Dhz_\mathcal{X})$. For all $i$ define $\mathcal{E}^\cdot_i=\mathcal{O}_{X_i}\overset{\mathbb{L}}{\otimes}_{\mathcal{O}_\mathcal{X}}\mathcal{E}^\cdot$.

$\mathcal{E}^\cdot$ is said quasi-coherent if
\begin{itemize}
\item $\mathcal{E}^\cdot_0\in D^-_{qcoh}(\Dz_{X_0})$,
\item There is a canonical isomorphism $\mathcal{E}^\cdot\simeq R\varprojlim\mathcal{E}^\cdot_i$.
\end{itemize}

Denote by $D^-_{qcoh}(\Dhz_\mathcal{X})$ the full sub-category of $D^-(\Dhz_\mathcal{X})$ whose objects are the quasi-coherents.

\end{defi}

The following characterization can be find in \cite{D_mod1}.

\begin{theo}[Berthelot]

If $(\mathcal{E}_i^\cdot)_i$ is a projective system of complexes such that the $i$-th element is in $D^b(\Dz_{X_i})$ and verifying
\begin{itemize}
\item $\mathcal{E}^\cdot_0\in D^b_{qcoh}(\Dz_{X_0})$,
\item For all $i$ the canonical morphism
\[\Dz_{X_i}\overset{\mathbb{L}}{\otimes}_{\Dz_{X_{i+1}}}\mathcal{E}^\cdot_{i+1}\rightarrow\mathcal{E}^\cdot_i\]
is an isomorphism,
\end{itemize}
then $\mathcal{E}^\cdot=R\varprojlim\mathcal{E}^\cdot_i$ is an object of $D^b_{qcoh}(\Dhz_\mathcal{X})$.

\end{theo}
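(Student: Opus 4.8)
The strategy is to reduce everything to the two defining conditions of quasi-coherence and show they follow from the hypotheses on the projective system $(\calE_i^\cdot)_i$. The first condition, $\calE_0^\cdot\in D^b_{qcoh}(\Dz_{X_0})$, is already assumed, so the entire content is to prove that $\calE^\cdot=R\varprojlim\calE_i^\cdot$ is bounded, lies in $D^-(\Dhz_\calX)$, and satisfies the canonical isomorphism $\calE^\cdot\simeq R\varprojlim(\calO_{X_i}\otimes^{\mathbb{L}}_{\calO_\calX}\calE^\cdot)$. The key point is that the hypothesis
\[\Dz_{X_i}\overset{\mathbb{L}}{\otimes}_{\Dz_{X_{i+1}}}\calE^\cdot_{i+1}\xrightarrow{\ \sim\ }\calE^\cdot_i\]
is the ``$\calD$-module analogue'' of the transition condition one needs, and it should let us identify $\calO_{X_i}\otimes^{\mathbb{L}}_{\calO_\calX}\calE^\cdot$ with $\calE_i^\cdot$ after passing to the limit.

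First I would establish boundedness: since $\calE_0^\cdot$ is bounded and, by the transition isomorphism applied inductively, each $\calE_i^\cdot$ is obtained from $\calE_0^\cdot$ by derived base change along $\Dz_{X_0}\to\Dz_{X_i}$ (which is flat on the relevant side, or at least has finite Tor-dimension since $\pi^i$ is a non-zero-divisor), the $\calE_i^\cdot$ have cohomological amplitude contained in a fixed interval independent of $i$. Then $R\varprojlim$ of a projective system of uniformly bounded complexes, combined with the fact that $R\varprojlim$ over $\mathbb{N}$ has cohomological dimension $\le 1$, shows $\calE^\cdot=R\varprojlim\calE_i^\cdot$ is bounded, hence lies in $D^b(\Dhz_\calX)$. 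Next I would compute $\calO_{X_i}\otimes^{\mathbb{L}}_{\calO_\calX}\calE^\cdot$: using that $\calO_{X_i}=\calO_\calX/\pi^i$ has the two-term free resolution $[\calO_\calX\xrightarrow{\pi^i}\calO_\calX]$, one gets a cone/triangle description, and the compatibility of $R\varprojlim$ with this finite homotopy colimit (together with the Mittag-Leffler property coming from surjectivity of the transition maps on cohomology, which holds because the transition maps are reductions modulo powers of $\pi$) gives $\calO_{X_i}\otimes^{\mathbb{L}}_{\calO_\calX}\calE^\cdot\simeq\calE_i^\cdot$. This simultaneously yields the canonical isomorphism $\calE^\cdot\simeq R\varprojlim\calE_i^\cdot$ tautologically and shows $\calE_0^\cdot\simeq\calO_{X_0}\otimes^{\mathbb{L}}_{\calO_\calX}\calE^\cdot$, so the quasi-coherence conditions hold.

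The main obstacle I anticipate is the careful handling of the derived projective limit and the interchange of $R\varprojlim$ with derived tensor product: one must check the relevant $\varprojlim^1$ terms vanish, which requires knowing the projective systems formed by the cohomology sheaves $\mathcal{H}^q(\calE_i^\cdot)$ satisfy Mittag-Leffler. This is where the hypothesis that the transition maps are the canonical ones coming from reduction modulo $\pi$ is essential — it forces the transition maps on cohomology to be surjective (or at least to stabilize), so the systems are ML. A secondary technical point is verifying that $\Dhz_\calX\otimes^{\mathbb{L}}_{\Dhz_\calX}(-)$-style identifications behave well, i.e. that $\calE^\cdot$ genuinely has a $\Dhz_\calX$-module structure and not merely a compatible family of $\Dz_{X_i}$-structures; this follows formally once $\calE^\cdot$ is exhibited as $R\varprojlim$ in the derived category of $\Dhz_\calX$-modules, using that $\Dhz_\calX=\varprojlim\Dz_{X_i}$. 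I would cite \cite{D_mod1} for the precise form of the base-change and limit lemmas rather than reprove them.
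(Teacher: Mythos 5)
First, note that the paper does not prove this statement at all: it is quoted from Berthelot (\cite{D_mod1}) with the sentence ``The following characterization can be find in \cite{D\_mod1}'', so there is no in-paper argument to compare yours against. Your outline does follow the standard route (identify $\mathcal{O}_{X_j}\otimes^{\mathbb{L}}_{\mathcal{O}_\mathcal{X}}R\varprojlim\mathcal{E}_i^\cdot$ with $\mathcal{E}_j^\cdot$ via the two-term perfect complex $[\mathcal{O}_\mathcal{X}\xrightarrow{\pi^j}\mathcal{O}_\mathcal{X}]$ and the fact that $R\varprojlim$ commutes with cones), and that part of the plan is sound.

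Two of your justifications, however, would fail as written. First, the transition maps on cohomology are \emph{not} surjective in general: the map $\mathcal{H}^q(\mathcal{E}_{i+1}^\cdot)\rightarrow\mathcal{H}^q(\Dz_{X_i}\otimes^{\mathbb{L}}\mathcal{E}_{i+1}^\cdot)\simeq\mathcal{H}^q(\mathcal{E}_i^\cdot)$ factors through $\mathcal{H}^q(\mathcal{E}_{i+1}^\cdot)/\pi^i$, whose cokernel in $\mathcal{H}^q(\mathcal{E}_i^\cdot)$ is a $\mathcal{T}or_1$-term coming from $\mathcal{H}^{q+1}$. (Already for $\mathcal{E}^\cdot=[\mathcal{O}_\mathcal{X}\xrightarrow{\pi}\mathcal{O}_\mathcal{X}]$ the transition maps on $\mathcal{H}^{-1}$ are eventually zero, not surjective.) The Mittag--Leffler property you need is still true, but the correct reason is that these extra torsion contributions have essentially zero (stabilizing) images, not surjectivity; you must argue that separately. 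Second, your boundedness argument relies on $\Dz_{X_0}\rightarrow\Dz_{X_i}$ being ``flat, or at least of finite Tor-dimension'': this is false ($V/\pi^iV\rightarrow V/\pi V$ has infinite Tor-dimension since $\pi$ is a zero-divisor in $V/\pi^iV$), and moreover the hypothesis only gives base change in the direction $i+1\rightsquigarrow i$, so you cannot ``reconstruct $\mathcal{E}_i^\cdot$ from $\mathcal{E}_0^\cdot$''. Uniform amplitude must instead be extracted from the assumption that each $\Dz_{X_j}\otimes^{\mathbb{L}}_{\Dz_{X_i}}\mathcal{E}_i^\cdot$ is bounded together with a Nakayama-type argument on the top and bottom cohomology sheaves. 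Both points are repairable, but as stated they are genuine gaps rather than routine verifications.
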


\subsubsection*{Tensor product}

We will only recall the definition given in \cite{D_mod1} (chapter 3) and some basic properties.

\begin{defi}
Let $\mathcal{E}^\cdot\in D^-_{qcoh}(\Dhz_\mathcal{X}\ ^r)$ (resp. $\mathcal{F}^\cdot\in D^-_{qcoh}( ^l\Dhz_\mathcal{X})$) be a complex of right (resp. left) $\Dhz_\mathcal{X}$-module.

\noindent As usual, denote $\mathcal{E}^\cdot_i=\mathcal{O}_{X_i}\overset{\mathbb{L}}{\otimes}_{\mathcal{O}_\mathcal{X}}\mathcal{E}^\cdot$. Idem for $\mathcal{F}^\cdot_i$.

The completed tensor product of $\mathcal{E}^\cdot$ and $\mathcal{F}^\cdot$ is defined as
\[\mathcal{E}^\cdot\hat{\otimes}^\mathbb{L}_{\Dhz_\mathcal{X}}\mathcal{F}^\cdot=R\underset{i}{\varprojlim}\left(\mathcal{E}^\cdot_i\overset{\mathbb{L}}{\otimes}_{\Dz_{X_i}}\mathcal{F}^\cdot_i\right).\]

\end{defi}

It is a $\Dhz_\mathcal{X}$-module.

Moreover, if the complexes have bounded cohomology, $\mathcal{E}^\cdot\hat{\otimes}^\mathbb{L}_{\Dhz_\mathcal{X}}\mathcal{F}^\cdot\in D^b_{qcoh}(\Dhz_\mathcal{X})$.

Finally, if one of the complexes is coherent, there is a canonical isomorphism \[\mathcal{E}^\cdot\overset{\mathbb{L}}{\otimes}_{\Dhz_\mathcal{X}}\mathcal{F}^\cdot\simeq\mathcal{E}^\cdot\hat{\otimes}^\mathbb{L}_{\Dhz_\mathcal{X}}\mathcal{F}^\cdot.\]

\subsubsection*{Inverse image}

The following definition can be found in \cite{D_mod1} (chapter 3).

\begin{defi}

Let $f:\mathcal{X}\rightarrow\mathcal{Y}$ be a morphism of smooth formal $\mathcal{S}$-schemes and $\mathcal{E}^\cdot\in D^b_{qcoh}(\Dhz_\mathcal{Y})$.

\noindent For all $i$, denote by $f_i:X_i\rightarrow Y_i$ the morphism of $S_i$-schemes associated to $f$.

The inverse image of $\mathcal{E}^\cdot$ by $f$ is defined as

\[f^!_{/\calS}\mathcal{E}^\cdot = R\underset{i}{\varprojlim}\left(f^!_{i/S_i}\mathcal{E}^\cdot_i\right).\]

\end{defi}

\begin{rem}

The notation $f^!_{/\calS}$ is used here to show explicitly the base $\calS$. This notation will make sense in the second half of this article, where there will be schemes which can be seen as $\calS$-schemes or $\mathcal{T}$-schemes (with $\mathcal{T}$ as $\calS$-scheme). The base scheme is of crucial importance when it comes to $\calD$-modules, as it determines which functions are the constants, and the functors $f^!_{/\calS}$ and $f^!_{/\mathcal{T}}$ are different.

\noindent However, if there is no confusion possible, we will use the usual notation $f^!$.

\end{rem}

Again, if $\mathcal{E}^\cdot$ is coherent, there is a canonical isomorphism

\[\Dhz_{\mathcal{X}\rightarrow\mathcal{Y}}\overset{\mathbb{L}}{\otimes}_{f^{-1}\Dhz_\mathcal{Y}}f^{-1}\mathcal{E}^\cdot[d_\mathcal{X}-d_\mathcal{Y}]\simeq f^!\mathcal{E}^\cdot,\]
where the transfer bimodule is defined as $\Dhz_{\mathcal{X}\rightarrow\mathcal{Y}}=\varprojlim\Dz_{X_i\rightarrow Y_i}$.

The formal functor $f^!$ preserves quasi-coherence and, if $f$ is smooth, preserves coherence. This is because the 'classical' functors $f_i^!$ do and (quasi-)coherence can be tested modulo $\pi^i$.

As relative $\Dhz$-modules will be crucial in the article, it is necessary to define the relative inverse image. Let $\calX$, $\calY$ and $\calZ$ be three formal schemes over $\calS$, $g:\calY\rightarrow\calZ$ and $f=id_\calX\times g:\calX\times\calY\rightarrow\calX\times\calZ$ and assume that $\calE^\cdot\in D^b_{qcoh}(\Dhz_{\calX\times\calZ/\calZ})$ (so only the derivations along $\calX$ acts on $\calE^\cdot$). The goal is to give some sense to the notion of pulling-back $\calE^\cdot$ on $\calX\times\calY$. The main idea is to say that we want to preserve the action of the derivations along $\calX$ and keep having no action of the derivations along $\calY$. It can be thought as a base change.

\begin{defi}

	For each $i$, define the transfer bimodule (of level 0) relative to $Y_i$ and $Z_i$ as
		\[\mathcal{D}^{(0)(Y_i,Z_i)}_{X_i\times Y_i\rightarrow X_i\times Z_i/S_i}=f_i^*\Dz_{X_i\times Z_i/Z_i}=\mathcal{O}_{X_i\times Y_i}\otimes_{f_i^{-1}\mathcal{O}_{X_i\times Z_i}}f_i^{-1}\Dz_{X_i\times Z_i/Z_i}.\]
	
	The transfer bimodule (of level 0) relative to $\calY$ and $\calZ$ can then be defined as
	\[\Dh^{(0)(\mathcal{Y},\mathcal{Z})}_{\mathcal{X}\times\mathcal{Y}\rightarrow\mathcal{X}\times\mathcal{Z}/\calS}=\underset{i}{\varprojlim}\mathcal{D}^{(0)(Y_i,Z_i)}_{X_i\times Y_i\rightarrow X_i\times Z_i}.\]
	
	For all $\mathcal{E}_i^\cdot\in D^b(\Dz_{X_i\times Z_i/Z_i})$, the inverse image relative to $Y_i$ and $Z_i$ of $\calE^\cdot_i$ by $f_i$ is defined by
	\[f_{i/S_i}^{!(Y_i,Z_i)}\mathcal{E}_i^\cdot=\mathcal{D}^{(0)(Y_i,Z_i)}_{X_i\times Y_i\rightarrow X_i\times Z_i}\overset{\mathbb{L}}{\otimes}_{f_i^{-1}\Dz_{X_i\times Z_i/Z_i}}f_i^{-1}\mathcal{E}_i^\cdot\in D^b(\Dz_{X_i\times Y_i/Y_i}).\]
	
	Finally, for all $\mathcal{E}^\cdot\in D^b_{qcoh}(\Dhz_{\mathcal{X}\times\calZ/\mathcal{Z}})$, the inverse image relative to $\mathcal{Y}$ and $\mathcal{Z}$ of $\calE^\cdot$ by $f$ is defined by
	\[f^{!(\mathcal{Y},\mathcal{Z})}_{/\calS}\mathcal{E}^\cdot = R\underset{i}{\varprojlim}\left(f_i^{!(Y_i,Z_i)}\mathcal{E}^\cdot_i\right).\]

\end{defi}

These are exactly the same definitions as in the non-relative case, at the only exception that all the subscripts $/S$ have been changed into subscripts $/Y$ or $/Z$, as now the variety $X\times Y$ (resp. $X\times Z$) is viewed as a $Y$-variety (resp. $Z$-variety). Also note that because of that the natural shift that should occur in the definition of the inverse image is $dim_Y(X\times Y)-dim_Z(X\times Z)=dim(X)-dim(X)=0$. This explains why there is no shift in the definition.

It is important to point out that these relative functors are actually very close to the functors defined on $\mathcal{O}$-modules, as they don't modify the structure of $\Dhz$-module of the sheaves they act on. Keeping that in mind, the following results are straightforward.

\begin{prop}

\begin{itemize}

\item $f_i^{!(Y_i,Z_i)}$ and $f^{!(\mathcal{Y},\mathcal{Z})}$ verify the transitivity formulas.

\item $f_i^{!(Y_i,Z_i)}\Dz_{X_i\times Z_i/Z_i}\simeq\Dz_{X_i\times Y_i/Y_i}$.

\item $f^{!(\mathcal{Y},\mathcal{Z})}\Dhz_{\mathcal{X}\times\mathcal{Z}/\mathcal{Z}}\simeq\Dhz_{\mathcal{X}\times\mathcal{Y}/\mathcal{Y}}$.

\item As $\mathcal{O}_{X_i\times Y_i}$-modules, $f_i^{!(Y_i,Z_i)}\calE_i^\cdot\simeq Lf_i^*\calE_i^\cdot$.

\item As $\mathcal{O}_{\calX\times \calY}$-modules, $f^{!(\calY,\calZ)}\calE^\cdot\simeq Lf^*\calE^\cdot$, where $Lf^*$ is defined as $Lf^*\calE^\cdot=R\varprojlim Lf_i^*\calE_i^\cdot$.

\item If $\mathcal{E}_i^\cdot\in D^b_{qcoh}(\Dz_{X_i\times Z_i/Z_i})$, then $f_i^{!(Y_i,Z_i)}\mathcal{E}_i^\cdot\in D^b_{qcoh}(\Dz_{X_i\times Y_i/Y_i})$.

\item If $\mathcal{E}^\cdot\in D^b_{qcoh}(\Dhz_{\mathcal{X}\times\mathcal{Z}/\mathcal{Z}})$, then $f^{!(\mathcal{Y},\mathcal{Z})}\mathcal{E}^\cdot\in D^b_{qcoh}(\Dhz_{\mathcal{X}\times\mathcal{Y}/\mathcal{Y}})$.

\item If $\mathcal{E}^\cdot\in D^b_{coh}(\Dhz_{\mathcal{X}\times\mathcal{Z}/\mathcal{Z}})$ there is a canonical isomorphism

\[\Dh^{(0)(\mathcal{Y},\mathcal{Z})}_{\mathcal{X}\times\mathcal{Y}\rightarrow\mathcal{X}\times\mathcal{Z}}\overset{\mathbb{L}}{\otimes}_{f^{-1}\Dhz_{\mathcal{X}\times\mathcal{Z}/\mathcal{Z}}}f^{-1}\mathcal{E}^\cdot\simeq f^{!(\mathcal{Y},\mathcal{Z})}\mathcal{E}^\cdot.\]

\item If $f_i$ (resp. $f$) is smooth, then $f_i^{!(Y_i,Z_i)}$ (resp. $f^{!(\mathcal{Y},\mathcal{Z})}$) preserves coherence.

\end{itemize}

\end{prop}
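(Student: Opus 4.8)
The plan is to reduce every assertion at finite level to Berthelot's theory of inverse images for $\mathcal{D}^{(m)}$-modules (\cite{D_mod1}, chapter 3) in the case $m=0$, and then to deduce the formal statements by applying $R\varprojlim$ and invoking the criterion of Berthelot quoted above. The organizing remark will be that, for each $i$, the scheme $X_i\times Y_i$ is canonically the fibre product $(X_i\times Z_i)\times_{Z_i}Y_i$, so that $f_i=id_{X_i}\times g_i$ is exactly the base change of $g_i\colon Y_i\to Z_i$ along the projection $X_i\times Z_i\to Z_i$; accordingly $\mathcal{D}^{(0)(Y_i,Z_i)}_{X_i\times Y_i\rightarrow X_i\times Z_i}$ is the transfer bimodule attached to this base change, $f_i^{!(Y_i,Z_i)}$ is the associated inverse-image functor for relative $\Dz$-modules, and the vanishing of the shift reflects $\dim_{Y_i}(X_i\times Y_i)=\dim_{Z_i}(X_i\times Z_i)=\dim(X_i)$. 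Granting this dictionary, the transitivity formula, the isomorphism $f_i^{!(Y_i,Z_i)}\Dz_{X_i\times Z_i/Z_i}\simeq\Dz_{X_i\times Y_i/Y_i}$ (apply the functor to the transfer bimodule of the identity), and---for smooth $f_i$---the preservation of coherence are immediate instances of the classical results, while the identification with the transfer bimodule at finite level is simply the definition of $f_i^{!(Y_i,Z_i)}$; the preservation of quasi-coherence will also follow from the $\mathcal{O}$-module comparison treated next.

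For the $\mathcal{O}$-module comparison I would argue as follows: choose a resolution $\mathcal{P}^\cdot\to f_i^{-1}\mathcal{E}_i^\cdot$ by flat $f_i^{-1}\Dz_{X_i\times Z_i/Z_i}$-modules. Since $\Dz_{X_i\times Z_i/Z_i}$ is locally free over $\mathcal{O}_{X_i\times Z_i}$, every flat $\Dz_{X_i\times Z_i/Z_i}$-module is flat over $\mathcal{O}_{X_i\times Z_i}$; hence, using the explicit shape $\mathcal{D}^{(0)(Y_i,Z_i)}_{X_i\times Y_i\rightarrow X_i\times Z_i}=\mathcal{O}_{X_i\times Y_i}\otimes_{f_i^{-1}\mathcal{O}_{X_i\times Z_i}}f_i^{-1}\Dz_{X_i\times Z_i/Z_i}$ and ordinary associativity of the tensor product, one gets $f_i^{!(Y_i,Z_i)}\mathcal{E}_i^\cdot\simeq\mathcal{O}_{X_i\times Y_i}\otimes_{f_i^{-1}\mathcal{O}_{X_i\times Z_i}}\mathcal{P}^\cdot$, and the right-hand side computes $Lf_i^*\mathcal{E}_i^\cdot$ precisely because the terms of $\mathcal{P}^\cdot$ are $f_i^{-1}\mathcal{O}_{X_i\times Z_i}$-flat. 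The formal version is then obtained by passing to the limit, the level-$i$ isomorphisms being compatible with the transition maps and $Lf^*$ being defined precisely as $R\varprojlim Lf_i^*$; preservation of quasi-coherence at finite level follows at once, since $Lf_i^*$ preserves $\mathcal{O}$-quasi-coherence.

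I would then transport the finite-level statements to the formal ones. As $f^{!(\mathcal{Y},\mathcal{Z})}\mathcal{E}^\cdot=R\varprojlim_i f_i^{!(Y_i,Z_i)}\mathcal{E}_i^\cdot$ by construction, the identity $f^{!(\mathcal{Y},\mathcal{Z})}\Dhz_{\mathcal{X}\times\mathcal{Z}/\mathcal{Z}}\simeq\Dhz_{\mathcal{X}\times\mathcal{Y}/\mathcal{Y}}$ follows by passing to the limit in the corresponding finite-level isomorphism, and the comparison with $\Dh^{(0)(\mathcal{Y},\mathcal{Z})}_{\mathcal{X}\times\mathcal{Y}\rightarrow\mathcal{X}\times\mathcal{Z}}$ for coherent $\mathcal{E}^\cdot$ is the same completeness statement already recalled for the completed tensor product, applied to $\mathcal{E}^\cdot$ and the $\pi$-adically complete bimodule $\Dh^{(0)(\mathcal{Y},\mathcal{Z})}_{\mathcal{X}\times\mathcal{Y}\rightarrow\mathcal{X}\times\mathcal{Z}}$. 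For quasi-coherence---and for coherence when $f$ is smooth---I would verify that the projective system $\bigl(f_i^{!(Y_i,Z_i)}\mathcal{E}_i^\cdot\bigr)_i$ meets the hypotheses of Berthelot's criterion quoted above: its reduction modulo $\pi$ lies in $D^b_{qcoh}$ (resp. $D^b_{coh}$) by the finite-level statements, and the transition morphism $\Dz_{X_i\times Y_i/Y_i}\overset{\mathbb{L}}{\otimes}_{\Dz_{X_{i+1}\times Y_{i+1}/Y_{i+1}}}f_{i+1}^{!(Y_{i+1},Z_{i+1})}\mathcal{E}_{i+1}^\cdot\to f_i^{!(Y_i,Z_i)}\mathcal{E}_i^\cdot$ is an isomorphism; Berthelot's theorem then places $f^{!(\mathcal{Y},\mathcal{Z})}\mathcal{E}^\cdot$ in $D^b_{qcoh}(\Dhz_{\mathcal{X}\times\mathcal{Y}/\mathcal{Y}})$, coherence being likewise tested modulo $\pi$.

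The main obstacle is exactly this last transition isomorphism, equivalently the compatibility of the relative transfer bimodules with reduction modulo $\pi^i$: one must check that $\Dz_{X_i\times Y_i/Y_i}\otimes_{\Dz_{X_{i+1}\times Y_{i+1}/Y_{i+1}}}\mathcal{D}^{(0)(Y_{i+1},Z_{i+1})}_{X_{i+1}\times Y_{i+1}\rightarrow X_{i+1}\times Z_{i+1}}\simeq\mathcal{D}^{(0)(Y_i,Z_i)}_{X_i\times Y_i\rightarrow X_i\times Z_i}$ and that the derived tensor product above has no higher terms. With the explicit description of the relative transfer bimodule recalled above and the local freeness of each $\Dz$ over its structure sheaf, this reduces to a flatness computation on the underlying $\mathcal{O}$-modules---routine, but the place where the modest work of the proof is concentrated.
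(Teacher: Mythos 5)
Your proposal is correct and follows exactly the route the paper intends: the paper gives no proof of this proposition, declaring the results straightforward consequences of the observation that the relative functors do not modify the $\Dhz$-module structure and coincide with the $\mathcal{O}$-module inverse image on underlying $\mathcal{O}$-modules, which is precisely your organizing principle. Your finite-level flatness computation and the passage to the limit via Berthelot's criterion supply the details the paper leaves implicit.
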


\subsubsection*{Direct image}

As for inverse images, the first defintion can be found in \cite{D_mod1} (chapter 3).

\begin{defi}

Let $f:\mathcal{X}\rightarrow\mathcal{Y}$ be a morphism of smooth formal $\mathcal{S}$-schemes and $\mathcal{E}^\cdot\in D^b_{qcoh}(\Dhz_\mathcal{X})$.

\noindent For all $i$, denote by $f_i:X_i\rightarrow Y_i$ the morphism of $S_i$-schemes associated to $f$.

The direct image of $\mathcal{E}^\cdot$ by $f$ is defined as

\[f_{+/\calS}\mathcal{E}^\cdot = R\underset{i}{\varprojlim}\left(f_{i+/S_i}\mathcal{E}^\cdot_i\right).\]

\end{defi}

Again, if $\mathcal{E}^\cdot$ is coherent there is a canonical isomorphism

\[Rf_*\left(\Dhz_{\mathcal{Y}\leftarrow\mathcal{X}}\overset{\mathbb{L}}{\otimes}_{\Dhz_\mathcal{X}}\mathcal{E}^\cdot\right)\simeq f_+\mathcal{E}^\cdot,\]
where the transfer bimodule is defined as $\Dhz_{\mathcal{Y}\leftarrow\mathcal{X}}=\varprojlim\Dz_{Y_i\leftarrow X_i}$.

As before, the formal functor $f_+$ preserves quasi-coherence and, if $f$ is proper, preserves coherence.

Finaly, let $\calX$, $\calY$ and $\calZ$ be three formal schemes over $\calS$, $g:\calY\rightarrow\calZ$ and $f=id_\calX\times g:\calX\times\calY\rightarrow\calX\times\calZ$ and assume that $\calE^\cdot\in D^b_{qcoh}(\Dhz_{\calX\times\calY/\calY})$. One can define the relative direct image of $\calE^\cdot$ using the same idea as before: replacing all the $/S$ in the definition by $/Y$ or $/Z$.

\begin{defi}

	For all $i$, the transfer bimodule (of level 0) relative to $Y_i$ and $Z_i$ is defined by
	\[\mathcal{D}^{(0)(Y_i,Z_i)}_{X_i\times Z_i\leftarrow X_i\times Y_i/S_i}=\omega_{X_i\times Y_i/Y_i}\otimes_{\mathcal{O}_{X_i\times Y_i}}\mathcal{D}^{(0)(Y_i,Z_i)}_{X_i\times Y_i\rightarrow X_i\times Z_i}\otimes_{f^{-1}\mathcal{O}_{X_i\times Z_i}}f^{-1}\omega^{-1}_{X_i\times Z_i/Z_i}.\]
	
	The transfer bimodule (of level 0) relative to $\mathcal{Y}$ and $\mathcal{Z}$ is then defined as
	\[\Dh^{(0)(\mathcal{Y},\mathcal{Z})}_{\mathcal{X}\times\mathcal{Z}\leftarrow\mathcal{X}\times\mathcal{Y}/\calS}=\underset{i}{\varprojlim}\mathcal{D}^{(0)(Y_i,Z_i)}_{X_i\times Z_i\leftarrow X_i\times Y_i}.\]
	
	For all $\mathcal{E}_i^\cdot\in D^b(\Dz_{X_i\times Y_i/Y_i})$, the direct image relative to $Y_i$ and $Z_i$ of $\mathcal{E}^\cdot_i$ by $f_i$ is defined as
	\[f_{i+/S_i}^{(Y_i,Z_i)}\mathcal{E}_i^\cdot=Rf_{i*}\left(\mathcal{D}^{(0)(Y_i,Z_i)}_{X_i\times Z_i\leftarrow X_i\times Y_i}\overset{\mathbb{L}}{\otimes}_{\Dz_{X_i\times Y_i/Y_i}}\mathcal{E}^\cdot\right)\in D(\Dz_{X_i\times Z_i/Z_i}).\]
	
	Finally, for all $\mathcal{E}^\cdot\in D^b_{qcoh}(\Dhz_{\mathcal{X}\times\calY/\mathcal{Y}})$, its direct image relative to $\mathcal{Y}$ and $\mathcal{Z}$ is defined by
	\[f^{(\mathcal{Y},\mathcal{Z})}_{+/\calS}\mathcal{E}^\cdot = R\underset{i}{\varprojlim}\left(f_{i+}^{(Y_i,Z_i)}\mathcal{E}^\cdot_i\right).\]

\end{defi}

Again, the proofs of the following properties are straightforward.

\begin{prop}

\begin{itemize}

\item $f_{i+}^{(Y_i,Z_i)}$ and $f^{(\mathcal{Y},\mathcal{Z})}_+$ verify the transitivity formulas.

\item As $\calO_{X_i\times Z_i}$-modules, $f^{(Y_i,Z_i)}_{i+}\mathcal{E}_i^\cdot\simeq Rf_{i*}\mathcal{E}_i^\cdot$.

\item As $\calO_{\calX\times\calZ}$-modules, $f^{(\mathcal{Y},\mathcal{Z})}_+\mathcal{E}^\cdot\simeq Rf_{*}\mathcal{E}^\cdot$.

\item If $\mathcal{E}_i^\cdot\in D^b_{qcoh}(\Dz_{X_{Y,i}/Y_i})$, then $f_{i+}^{(Y_i,Z_i)}\mathcal{E}_i^\cdot\in D^b_{qcoh}(\Dz_{X_{Z,i}/Z_i})$.

\item If $\mathcal{E}^\cdot\in D^b_{qcoh}(\Dhz_{\mathcal{X}_\mathcal{Y}/\mathcal{Y}})$, then $f_+^{(\mathcal{Y},\mathcal{Z})}\mathcal{E}^\cdot\in D^b_{qcoh}(\Dhz_{\mathcal{X}_\mathcal{Z}/\mathcal{Z}})$.

\item If $\mathcal{E}^\cdot\in D^b_{coh}(\Dhz_{\mathcal{X}_\mathcal{Y}/\mathcal{Y}})$ there is a canonical isomorphism

\[Rf_*\left(\Dh^{(0)(\mathcal{Y},\mathcal{Z})}_{\mathcal{X}\times\mathcal{Z}\leftarrow\mathcal{X}\times\mathcal{Y}}\overset{\mathbb{L}}{\otimes}_{\Dhz_{\mathcal{X}\times\mathcal{Y}/\mathcal{Y}}}\mathcal{E}^\cdot\right)\simeq f_+^{(\mathcal{Y},\mathcal{Z})}\mathcal{E}^\cdot.\]

\item If $f_i$ (resp. $f$) is proper, then $f_{i+}^{(Y_i,Z_i)}$ (resp. $f_+^{!(\mathcal{Y},\mathcal{Z})}$) preserves coherence.

\end{itemize}

\end{prop}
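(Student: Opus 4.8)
The plan is to exploit the single structural fact that makes all the items essentially formal: since $f=id_\calX\times g$ is the identity along the $\calX$-direction while the relative $\Dhz$-module structures in play only involve the derivations along $\calX$, the relative direct image does not touch that structure — it is nothing but $Rf_*$ on underlying $\calO$-modules, with the $\Dhz$-action transported along the canonical identification of the relative dualizing sheaves $\omega_{X_i\times Y_i/Y_i}$ and $f_i^*\omega_{X_i\times Z_i/Z_i}$ (both being the pullback of $\omega_{X_i}$ along the projection to $X_i$). Spelled out, the two $\omega$-twists in the transfer bimodule $\mathcal{D}^{(0)(Y_i,Z_i)}_{X_i\times Z_i\leftarrow X_i\times Y_i}$ cancel because $f_i$ is the identity along $\calX$, so for any $\calE_i^\cdot$ the complex $\mathcal{D}^{(0)(Y_i,Z_i)}_{X_i\times Z_i\leftarrow X_i\times Y_i}\overset{\mathbb{L}}{\otimes}_{\Dz_{X_i\times Y_i/Y_i}}\calE_i^\cdot$ has underlying $\calO_{X_i\times Y_i}$-complex equal to $\calE_i^\cdot$; applying $Rf_{i*}$ gives $f_{i+}^{(Y_i,Z_i)}\calE_i^\cdot\simeq Rf_{i*}\calE_i^\cdot$ as $\calO_{X_i\times Z_i}$-modules, and passing to $R\varprojlim$ (which is how $Rf_*$ on $\calX\times\calY$ is defined) yields $f_+^{(\mathcal{Y},\mathcal{Z})}\calE^\cdot\simeq Rf_*\calE^\cdot$.

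For the transitivity formulas I would factor $g=g''\circ g'$, set $f'=id_\calX\times g'$ and $f''=id_\calX\times g''$, and exhibit the usual canonical isomorphism expressing the transfer bimodule attached to $f=f''\circ f'$ as a derived tensor product of those attached to $f'$ and $f''$; together with $Rf_*=Rf''_*\circ Rf'_*$ and the projection formula this is verbatim the classical computation of \cite{D_mod1} (chapter 3) with every subscript $/S_i$ replaced by the relevant relative base. The formal statement is then immediate, since $f_+^{(\mathcal{Y},\mathcal{Z})}$ is built level by level.

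The remaining items reduce to their classical analogues modulo $\pi^i$ followed by the theorem of Berthelot recalled above. Preservation of $D^b_{qcoh}$ by $f_{i+}^{(Y_i,Z_i)}$ is precisely the statement of \cite{D_mod1} (chapter 3) for the morphism $f_i$ over the base $Z_i$ in place of $S_i$, boundedness being ensured by the finite cohomological dimension of the separated finite-type morphism $g_i$. For the transfer-bimodule isomorphism one uses that, $\calE^\cdot$ being coherent, the completed tensor products of the previous subsections coincide with the ordinary derived ones and are already $\pi$-adically complete, so that
\[
f_+^{(\mathcal{Y},\mathcal{Z})}\calE^\cdot
= R\varprojlim Rf_{i*}\!\left(\mathcal{D}^{(0)(Y_i,Z_i)}_{X_i\times Z_i\leftarrow X_i\times Y_i}\overset{\mathbb{L}}{\otimes}_{\Dz_{X_i\times Y_i/Y_i}}\calE_i^\cdot\right)
\simeq Rf_*\!\left(\Dh^{(0)(\mathcal{Y},\mathcal{Z})}_{\mathcal{X}\times\mathcal{Z}\leftarrow\mathcal{X}\times\mathcal{Y}}\overset{\mathbb{L}}{\otimes}_{\Dhz_{\mathcal{X}\times\mathcal{Y}/\mathcal{Y}}}\calE^\cdot\right),
\]
the last identification combining the definition of $\Dh^{(0)(\mathcal{Y},\mathcal{Z})}_{\mathcal{X}\times\mathcal{Z}\leftarrow\mathcal{X}\times\mathcal{Y}}$ as a projective limit with the commutation of $R\varprojlim$ and $Rf_*$. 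Finally, the formal quasi-coherence and coherence statements follow from the corresponding classical ones for $f_i$ (proper in the coherent case) together with Berthelot's theorem, once one checks that the transition maps $\Dz_{X_i\times Z_i/Z_i}\overset{\mathbb{L}}{\otimes}_{\Dz_{X_{i+1}\times Z_{i+1}/Z_{i+1}}}f_{i+1,+}^{(Y_{i+1},Z_{i+1})}\calE_{i+1}^\cdot\to f_{i+}^{(Y_i,Z_i)}\calE_i^\cdot$ are isomorphisms — the analogous base-change compatibility for the classical relative direct image.

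The only genuinely technical point is the interchange of $R\varprojlim$ and $Rf_*$ invoked for the transfer-bimodule isomorphism (and, implicitly, for the $\calO$-module identification over $\calX\times\calZ$): this is exactly the commutation established in the non-relative setting in \cite{D_mod1}, and one only has to observe that passing from the absolute base to the relative base leaves the underlying topological spaces and the sheaves of rings unchanged, so that argument carries over without modification.
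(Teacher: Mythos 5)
Your proposal is correct and follows exactly the route the paper has in mind: the paper declares these properties ``straightforward'' precisely because, as you observe, the transfer bimodule $\calD^{(0)(Y_i,Z_i)}_{X_i\times Z_i\leftarrow X_i\times Y_i}$ is isomorphic to $\Dz_{X_i\times Y_i/Y_i}$ (the $\omega$-twists cancelling since $f=id_\calX\times g$), so the relative direct image is $Rf_*$ on underlying $\calO$-modules and everything reduces to the classical statements over the base $Z_i$ plus passage to the limit via Berthelot's criterion. Your write-up supplies more detail than the paper, but the argument is the same.
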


\subsection{Fundamental properties}

The goal of this section is to give some important 'calculus' results on relative $\Dhz$-modules, such as the projection formula or the base change formula. It is important to note that these results are isomorphisms whose morphisms are constructed using adjunction formulas, hence it is sufficient to prove the adjunction formulas for relative $\Dhz$-modules, as it is sufficient for a morphism of $\Dhz$-modules to be an isomorphism on the underlying $\calO$-modules in order to be an isomorphism of $\Dhz$-modules.

First of all, recall the adjunction formulas for non-relative $\Dhz$-modules. See \cite{Virrion} sections IV.4 et IV.5 for more details.

\begin{prop}[Adjunction formulas]

Let $f:\mathcal{X}\rightarrow\mathcal{Y}$ be a proper morphism of smooth formal schemes, $\mathcal{E}^\cdot\in D^b_{coh}(\Dhz_\mathcal{X})$ and $\mathcal{F}^\cdot\in D^b_{qcoh}(\Dhz_\mathcal{Y})$.
Then for all $i$ \[Rf_{i*}R\mathcal{H}om_{\Dz_{X_i}}(\mathcal{E}_i^\cdot,f_i^!\mathcal{F}_i^\cdot)\simeq
R\mathcal{H}om_{\Dz_{Y_i}}(f_{i+}\mathcal{E}_i^\cdot,\mathcal{F}_i^\cdot)\]
and \[Rf_{*}R\mathcal{H}om_{\Dhz_\mathcal{X}}(\mathcal{E}^\cdot,f^!\mathcal{F}^\cdot)\simeq
R\mathcal{H}om_{\Dhz_\mathcal{Y}}(f_{+}\mathcal{E}^\cdot,\mathcal{F}^\cdot).\]

In particular, there are morphisms
\[\mathcal{E}^\cdot_i\rightarrow f_i^! f_{i+}\mathcal{E}^\cdot_i \text{ and } \mathcal{E}^\cdot\rightarrow f^! f_{+}\mathcal{E}^\cdot\]
and if $f^!\mathcal{F}^\cdot\in D^b_{coh}(\Dhz_\mathcal{Y})$, there are morphisms
\[f_{i+} f^!_i\mathcal{F}^\cdot_i\rightarrow \mathcal{F}^\cdot_i \text{ and } f_{+} f^!\mathcal{F}^\cdot\rightarrow \mathcal{F}^\cdot.\]

\end{prop}

There are also adjunction formulas for relative $\Dhz$-modules.

\begin{prop}[Relative adjunction morphisms]\label{rel_adj}
Let $\mathcal{X}$, $\mathcal{Y}$ and $\mathcal{Z}$ be three formal schemes, $f:\mathcal{X}_\mathcal{Y}=\mathcal{X}\times \mathcal{Y}\rightarrow \mathcal{X}_\mathcal{Z}=\mathcal{X}\times \mathcal{Z}$ a morphism of the form $id_\mathcal{X}\times g$, $\mathcal{E}^\cdot\in D^b_{qcoh}(\Dhz_{\mathcal{X}_\mathcal{Y}/\mathcal{Y}})$ and $\mathcal{F}^\cdot\in D^b_{qcoh}(\mathcal{D}_{\mathcal{X}_\mathcal{Z}/\mathcal{Z}})$. The adjunction morphisms
\[f_i^{!(Y_i,Z_i)}f_{i+}^{(Y_i,Z_i)}\mathcal{E}_i^\cdot\rightarrow\mathcal{E}_i^\cdot\text{ et } \mathcal{F}_i^\cdot\rightarrow f_{i+}^{(Y_i,Z_i)}f_i^{!(Y_i,Z_i)}\mathcal{F}_i^\cdot\]
are respectively morphisms on the categories $D^b_{qcoh}(\Dz_{X_{Y,i}/Y_i})$ and $D^b_{qcoh}(\Dz_{X_{Z,i}/Z_i})$.

There are also adjunction morphisms
\[f^{!(\mathcal{Y},\mathcal{Z})}f_{+}^{(\mathcal{Y},\mathcal{Z})}\mathcal{E}^\cdot\rightarrow\mathcal{E}^\cdot\text{ et } \mathcal{F}^\cdot\rightarrow f_{+}^{(\mathcal{Y},\mathcal{Z})}f^{!(\mathcal{Y},\mathcal{Z})}\mathcal{F}^\cdot\]
in the categories $D^b_{qcoh}(\Dhz_{\mathcal{X}_\mathcal{Y}/\mathcal{Y}})$ and $D^b_{qcoh}(\Dhz_{\mathcal{X}_\mathcal{Z}/\mathcal{Z}})$.
\end{prop}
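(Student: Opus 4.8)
The plan is to reduce everything to the non-relative adjunction formulas already stated, exploiting the key observation made in the text: the relative functors $f_i^{!(Y_i,Z_i)}$ and $f_{i+}^{(Y_i,Z_i)}$ do not alter the $\calD$-module structure along $\calX$, so on underlying $\calO$-modules they coincide with $Lf_i^*$ and $Rf_{i*}$. Since a morphism of $\Dz_{X_i\times Y_i/Y_i}$-modules is an isomorphism as soon as it is one on underlying $\calO$-modules, it suffices to produce the adjunction \emph{morphisms}; their being morphisms in the correct derived categories is what actually needs checking, and their behaviour (e.g. being isomorphisms under further hypotheses) follows from the corresponding $\calO$-module statements.

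First I would fix $i$ and work with $f_i = \mathrm{id}_{X_i}\times g_i : X_i\times Y_i\to X_i\times Z_i$. The construction of the two adjunction morphisms is the standard one: view $X_i\times Y_i$ as a $Y_i$-scheme and $X_i\times Z_i$ as a $Z_i$-scheme, and apply the non-relative adjunction formulas of Virrion (the displayed \texttt{prop} above) \emph{fibrewise over the $\calY$/$\calZ$ direction}. Concretely, the unit $\mathcal{F}_i^\cdot\to f_{i+}^{(Y_i,Z_i)} f_i^{!(Y_i,Z_i)}\mathcal{F}_i^\cdot$ and the counit $f_i^{!(Y_i,Z_i)} f_{i+}^{(Y_i,Z_i)}\mathcal{E}_i^\cdot\to \mathcal{E}_i^\cdot$ come from the transfer-bimodule descriptions of $f_i^{!(Y_i,Z_i)}$ and $f_{i+}^{(Y_i,Z_i)}$ given in the definitions (for the counit one uses, as in Virrion IV.5, the trace map attached to the proper morphism $f_i$ together with the tensor-hom adjunction for the transfer bimodules $\mathcal{D}^{(0)(Y_i,Z_i)}_{X_i\times Y_i\rightarrow X_i\times Z_i}$ and $\mathcal{D}^{(0)(Y_i,Z_i)}_{X_i\times Z_i\leftarrow X_i\times Y_i}$). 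Since these transfer bimodules are, as sheaves on the source/target, literally $f_i^*\Dz_{X_i\times Z_i/Z_i}$ resp. its $\omega$-twist, the whole construction is formally identical to the non-relative one with every subscript $/S_i$ replaced by $/Y_i$ or $/Z_i$; in particular $f_i^{!(Y_i,Z_i)}$ preserves $D^b_{qcoh}(\Dz_{X_i\times Z_i/Z_i})$ and $f_{i+}^{(Y_i,Z_i)}$ preserves $D^b_{qcoh}(\Dz_{X_i\times Y_i/Y_i})$ by the previous \texttt{prop}, so the source and target of each adjunction map lie in the asserted categories.

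Next I would pass to the formal level. Since $\mathcal{E}^\cdot\simeq R\varprojlim_i \mathcal{E}_i^\cdot$ and $\mathcal{F}^\cdot\simeq R\varprojlim_i \mathcal{F}_i^\cdot$ by quasi-coherence, and since $f_{+}^{(\mathcal{Y},\mathcal{Z})}$ and $f^{!(\mathcal{Y},\mathcal{Z})}$ are \emph{defined} as $R\varprojlim_i$ of the corresponding level-$i$ functors, the composites $f_{i+}^{(Y_i,Z_i)} f_i^{!(Y_i,Z_i)}$ and $f_i^{!(Y_i,Z_i)} f_{i+}^{(Y_i,Z_i)}$ assemble into projective systems compatible with the transition maps (the compatibility being exactly the statement that the unit/counit are morphisms of functors, which is checked at each level by the transitivity formulas of the two \texttt{prop}'s above). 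Applying $R\varprojlim_i$ to the level-$i$ adjunction morphisms then produces the desired morphisms $f^{!(\mathcal{Y},\mathcal{Z})} f_{+}^{(\mathcal{Y},\mathcal{Z})}\mathcal{E}^\cdot\to\mathcal{E}^\cdot$ and $\mathcal{F}^\cdot\to f_{+}^{(\mathcal{Y},\mathcal{Z})} f^{!(\mathcal{Y},\mathcal{Z})}\mathcal{F}^\cdot$; that the targets remain in $D^b_{qcoh}(\Dhz_{\mathcal{X}_\mathcal{Y}/\mathcal{Y}})$ resp. $D^b_{qcoh}(\Dhz_{\mathcal{X}_\mathcal{Z}/\mathcal{Z}})$ follows from the $R\varprojlim$-stability of quasi-coherence already recorded (Berthelot's theorem and the preservation statements in the two \texttt{prop}'s).

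The main obstacle is not the construction but the bookkeeping needed to know that the level-$i$ adjunction morphisms are genuinely compatible with the transition morphisms $\Dz_{X_i}\overset{\mathbb{L}}{\otimes}_{\Dz_{X_{i+1}}}(-)$, so that $R\varprojlim_i$ can be applied and yields a morphism in the derived category of $\Dhz$-modules rather than merely a morphism of projective systems with a possible $\varprojlim^1$ defect. Here one leans on the boundedness and coherence/quasi-coherence hypotheses: for $\mathcal{E}^\cdot$ coherent one has the transfer-bimodule description of both relative functors (last item of each \texttt{prop}), and the transition maps for the transfer bimodules $\Dh^{(0)(\mathcal{Y},\mathcal{Z})}_{\mathcal{X}\times\mathcal{Y}\rightarrow\mathcal{X}\times\mathcal{Z}}$ and $\Dh^{(0)(\mathcal{Y},\mathcal{Z})}_{\mathcal{X}\times\mathcal{Z}\leftarrow\mathcal{X}\times\mathcal{Y}}$ are visibly compatible with those of $\Dz_{X_i\times Z_i/Z_i}$; combined with the Berthelot-type criterion this makes the projective system of level-$i$ adjunction morphisms a morphism of pro-objects with vanishing higher limits, which is exactly what is needed. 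The remaining verifications (that each arrow is $\Dz$-linear, naturality in $\mathcal{E}^\cdot$ and $\mathcal{F}^\cdot$) are routine and reduce on underlying $\calO$-modules to the classical $(Lf^*, Rf_*)$ adjunction, using the penultimate items of the two \texttt{prop}'s.
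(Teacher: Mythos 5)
Your target is right and you have correctly spotted the key structural fact (on underlying $\calO$-modules the relative functors are just $Lf^*$ and $Rf_*$), but the step where you actually \emph{construct} the morphisms goes through the wrong adjunction. You propose to obtain the unit and counit ``as in Virrion IV.5, [from] the trace map attached to the proper morphism $f_i$'', i.e. by running the non-relative adjunction fibrewise. That machinery produces the adjunction in which $f_{i+}$ is the \emph{left} adjoint of $f_i^!$, whose unit and counit are $\calE_i^\cdot\to f_i^!f_{i+}\calE_i^\cdot$ and $f_{i+}f_i^!\calF_i^\cdot\to\calF_i^\cdot$ --- the opposite direction from the arrows asserted in the proposition --- and it needs $f$ proper, which is not among the hypotheses here. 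The whole point of the relative statement (made explicit in the remark that follows it) is that the relative adjunction is \emph{not} the non-relative one relativized: here $f^{!(\mathcal{Y},\mathcal{Z})}$ is the left adjoint and $f_{+}^{(\mathcal{Y},\mathcal{Z})}$ the right adjoint, exactly as for $\calO$-modules. ``Reducing to underlying $\calO$-modules'' at the end does not repair this, since the $(Lf^*,Rf_*)$ adjunction for $\calO$-modules does not by itself produce $\Dz$-linear arrows.

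The correct (and much shorter) construction, which is the one the paper uses, is the elementary ringed-space adjunction: for $\calA=\Dz_{X_i\times Y_i/Y_i}$, $\calB=\Dz_{X_i\times Z_i/Z_i}$ and the ring morphism $\calB\to f_{i*}\calA$, the functor $\calA\otimes_{f_i^{-1}\calB}f_i^{-1}(-)$ is left adjoint to $f_{i*}$ (the Stacks project lemma cited in the paper). Since $\calD^{(0)(Y_i,Z_i)}_{X_i\times Z_i\leftarrow X_i\times Y_i}\simeq\Dz_{X_i\times Y_i/Y_i}$ up to switching left and right structures --- the observation you make about the transfer bimodules but do not exploit --- the underived relative direct image is exactly $f_{i*}$ of a $\calD$-module and the underived relative inverse image is exactly $\calA\otimes_{f_i^{-1}\calB}f_i^{-1}(-)$; one functor is left exact (the bimodule is $\Dz_{X_i\times Y_i/Y_i}$-flat), the other right exact, so the adjunction passes to derived functors with no properness or trace map, and the unit and counit are $\Dz$-linear by construction. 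Your passage to the formal level by taking the inverse limit of the level-$i$ morphisms is fine and agrees with the paper; the compatibility with reduction modulo $\pi^j$ is immediate because each level-$i$ adjunction is an instance of the same functorial construction.
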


\begin{proof}

As the following reasoning will work on all the morphisms $f_i:X_i\times Y_i\rightarrow X_i\times Z_i$, we will simplify the notations and consider a morphism $f:X\times Y\rightarrow X\times Z$.

It is known (\cite[\href{https://stacks.math.columbia.edu/tag/0096}{Lemme 0096}]{Sta_proj}) that given $\calA$ a sheaf of ring on $X\times Y$, $\calB$ a sheaf of ring on $X\times Z$ and a structure $\calB\rightarrow f_*\calA$ then the functor $\calA\otimes_{f^{-1}\calB}f^{-1}:Mod(\calB)\rightarrow Mod(\calA)$ is left adjoint to the functor $f_*:Mod(\calA)\rightarrow Mod(\calB)$.

Applying this result with $\calA=\Dz_{X\times Y/Y}$ and $\calB=\Dz_{X\times Z/Z}$ and remarking that $\calD^{(Y,Z)}_{X\times Z\leftarrow X\times Y}\simeq\Dz_{X\times Y/Y}$ (modulo the inversion of the left and right structure), one can say that the functors
\[f_*(\calD^{(Y,Z)}_{X\times Z\leftarrow X\times Y}\otimes_{\Dz_{X\times Y/Y}}\bullet):Mod(\Dz_{X\times Y/Y})\rightarrow Mod(\Dz_{X\times Z/Z})\]
and
\[\Dz_{X\times Y/Y}\otimes_{f^{-1}\Dz_{X\times Z/Z}}f^{-1}:Mod(\Dz_{X\times Z/Z})\rightarrow Mod(\Dz_{X\times Y/Y})\]
are adjoints. As the first one is left exact ($\calD^{(Y,Z)}_{X\times Z\leftarrow X\times Y}$ is $\Dz_{X\times Y/Y}$-flat) and the second one is right exact, these functors have derived functors, which are still adjoints. These derived functors are $f^{(Y,Z)}_+$ and $f^{!(Y,Z)}$ respectively.

So, for all $i$, we have proved the adjunction between $f_i^{!(Y_i,Z_i)}$ and $f_{i+}^{(Y_i,Z_i)}$. For the one between $f^{!(\mathcal{Y},\mathcal{Z})}$ and $f_{+}^{(\mathcal{Y},\mathcal{Z})}$, it is sufficient to apply the inverse limit on the morphisms modulo $\pi^i$, as these morphisms are compatible to the reductions modulo $\pi^j$.

\end{proof}

\begin{rem}
The adjunction formulas for relative $\Dhz$-modules are not the same as the ones for non-relative $\Dhz$-modules, however, they are the same as the ones for $\calO$-modules. Hence, the following proofs for the results on relative $\Dhz$-modules will be closer to the ones of the results for $\calO$-modules than the ones of the results for non-relative $\Dhz$-modules.
\end{rem}

The projection formula will be an important tool to compute the composition of two Fourier-Mukai transforms. The following statement can be find in \cite{Virrion}, section II.4.

\begin{prop}[Projection formula]\label{proj}

Let $f:\mathcal{X}\rightarrow \mathcal{Y}$ be a proper morphism of smooth formal schemes, $\mathcal{E}^\cdot\in D^b_{qcoh}(\Dhz_{\mathcal{X}/\mathcal{S}})$ and $\mathcal{F}^\cdot\in D^b_{qcoh}(\Dhz_{\mathcal{Y}/\mathcal{S}})$.

Then for all $i$ there is a natural isomorphism
\[f_{i+}(f_i^\flat\mathcal{F}_i^\cdot\overset{\mathbb{L}}{\otimes}_{\mathcal{O}_{X_i}}\mathcal{E}_i^\cdot) \simeq \mathcal{F}_i^\cdot \overset{\mathbb{L}}{\otimes}_{\mathcal{O}_{Y_i}}f_{i+}\mathcal{E}_i^\cdot.\]
(Where the functor $f_i^\flat$ represents the inverse image without the shift.)

This isomorphism is compatible to the reductions modulo $\pi^j$, such that it naturally induces an isomorphism
\[f_+(f^\flat\mathcal{F}^\cdot\hat{\otimes}^{\mathbb{L}}_{\mathcal{O}_{\mathcal{X}}}\mathcal{E}^\cdot) \simeq \mathcal{F}^\cdot \hat{\otimes}^\mathbb{L}_{\mathcal{O}_{\mathcal{Y}}}f_+\mathcal{E}^\cdot.\]

\end{prop}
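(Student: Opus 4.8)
The plan is to prove the statement at each finite level $i$ and then pass to the projective limit. First I would treat the level-$i$ isomorphism
\[
f_{i+}(f_i^\flat\mathcal{F}_i^\cdot\overset{\mathbb{L}}{\otimes}_{\mathcal{O}_{X_i}}\mathcal{E}_i^\cdot) \simeq \mathcal{F}_i^\cdot \overset{\mathbb{L}}{\otimes}_{\mathcal{O}_{Y_i}}f_{i+}\mathcal{E}_i^\cdot
\]
as a known result: $X_i$ and $Y_i$ are schemes over $S_i$, $f_i$ is proper, and the projection formula for $\mathcal{D}^{(0)}$-modules is exactly the statement cited from \cite{Virrion}, section II.4. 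So the level-$i$ isomorphism is available for free; the construction of the morphism itself goes through the unit of the adjunction between $f_i^!$ (equivalently $f_i^\flat$ up to shift) and $f_{i+}$, tensored against the identity, and one checks it is an isomorphism on underlying $\mathcal{O}$-modules by reducing to the analogous statement for quasi-coherent $\mathcal{O}$-modules, which is the classical projection formula $Rf_{i*}(f_i^*\mathcal{G}\otimes^{\mathbb{L}}\mathcal{H})\simeq\mathcal{G}\otimes^{\mathbb{L}}Rf_{i*}\mathcal{H}$.

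Next I would verify the compatibility of these isomorphisms with the transition maps of the projective systems, i.e. with reduction modulo $\pi^j$. Concretely, for $j\leq i$ one has canonical identifications $(f_i^\flat\mathcal{F}_i^\cdot\otimes^{\mathbb{L}}\mathcal{E}_i^\cdot)_j\simeq f_j^\flat\mathcal{F}_j^\cdot\otimes^{\mathbb{L}}\mathcal{E}_j^\cdot$ (using $\mathcal{E}_j^\cdot = \mathcal{O}_{X_j}\otimes^{\mathbb{L}}_{\mathcal{O}_{X_i}}\mathcal{E}_i^\cdot$ and the base-change compatibility of $f^\flat$, which follows from the transitivity formulas already recorded in the excerpt), and similarly $(f_{i+}\mathcal{E}_i^\cdot)_j\simeq f_{j+}\mathcal{E}_j^\cdot$. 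Since the level-$i$ projection isomorphism is natural — it is built from adjunction units and the classical $\mathcal{O}$-module projection formula, both of which are compatible with arbitrary base change — its reduction mod $\pi^j$ agrees with the level-$j$ projection isomorphism. This makes the collection $\{$level-$i$ iso$\}_i$ a morphism of projective systems of complexes.

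Then I would apply $R\varprojlim_i$ to both sides. On the left, $R\varprojlim_i f_{i+}(f_i^\flat\mathcal{F}_i^\cdot\otimes^{\mathbb{L}}_{\mathcal{O}_{X_i}}\mathcal{E}_i^\cdot)$ is by definition $f_+$ applied to the completed tensor product $f^\flat\mathcal{F}^\cdot\,\hat\otimes^{\mathbb{L}}_{\mathcal{O}_\calX}\mathcal{E}^\cdot$, once one checks that $(f^\flat\mathcal{F}^\cdot\hat\otimes^{\mathbb{L}}\mathcal{E}^\cdot)_i\simeq f_i^\flat\mathcal{F}_i^\cdot\otimes^{\mathbb{L}}\mathcal{E}_i^\cdot$, which is the defining property of $\hat\otimes^{\mathbb{L}}$ together with $\mathcal{O}_{X_i}\otimes^{\mathbb{L}}_{\mathcal{O}_\calX}f^\flat\mathcal{F}^\cdot\simeq f_i^\flat\mathcal{F}_i^\cdot$. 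On the right, $R\varprojlim_i(\mathcal{F}_i^\cdot\otimes^{\mathbb{L}}_{\mathcal{O}_{Y_i}}f_{i+}\mathcal{E}_i^\cdot)\simeq\mathcal{F}^\cdot\,\hat\otimes^{\mathbb{L}}_{\mathcal{O}_\calY}f_+\mathcal{E}^\cdot$ by the definition of $\hat\otimes^{\mathbb{L}}$ and of $f_+$. Finally, since a morphism of $\Dhz_\calY$-modules is an isomorphism as soon as it is an isomorphism on underlying $\mathcal{O}_\calY$-modules (as the excerpt emphasises), and since it suffices to test this modulo $\pi^i$, the isomorphism at each level descends to the desired isomorphism $f_+(f^\flat\mathcal{F}^\cdot\hat\otimes^{\mathbb{L}}_{\mathcal{O}_\calX}\mathcal{E}^\cdot)\simeq\mathcal{F}^\cdot\hat\otimes^{\mathbb{L}}_{\mathcal{O}_\calY}f_+\mathcal{E}^\cdot$.

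The main obstacle I anticipate is not the level-$i$ statement (which is cited) nor the final descent (which is formal), but the bookkeeping in the middle step: one must be careful that all the base-change identifications $(\bullet)_j$ used to compare level $i$ and level $j$ are the \emph{canonical} ones and that the projection morphism really is natural with respect to them — in particular that the adjunction unit $\mathcal{E}_i^\cdot\to f_i^!f_{i+}\mathcal{E}_i^\cdot$ from Proposition~\ref{rel_adj} is compatible with reduction mod $\pi^j$, which is where one uses that these relative functors behave like $\mathcal{O}$-module functors. Once that naturality is pinned down, both $R\varprojlim$ computations are immediate from the definitions of $\hat\otimes^{\mathbb{L}}$ and $f_+$ recorded above.
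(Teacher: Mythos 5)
Your proposal is correct and follows essentially the same route as the paper: the level-$i$ isomorphism is taken from Virrion (section II.4), the morphism is built from the adjunction unit and checked to be an isomorphism on underlying $\mathcal{O}$-modules, and the formal statement follows by compatibility with reduction modulo $\pi^j$ and passage to $R\varprojlim$. The paper in fact gives no proof beyond the citation, so your write-up supplies exactly the bookkeeping the author leaves implicit.
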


The same result is true for relative $\Dhz$-modules.

\begin{prop}[Relative projection formula]\label{proj_rel}

Let $f:\mathcal{X}_\mathcal{Y} \rightarrow \mathcal{X}_\mathcal{Z}$ be a morphism of the form $id_\mathcal{X}\times g$, $\mathcal{E}^\cdot\in D^b_{qcoh}(\Dhz_{\mathcal{X}_\mathcal{Z}/\mathcal{Z}})$ and $\mathcal{F}^\cdot\in D^b_{qcoh}(\Dhz_{\mathcal{X}_\mathcal{Y}/\mathcal{Y}})$. Then
\[f^{(Y_i,Z_i)}_{i+}(f_i^{!(Y_i,Z_i)}\mathcal{E}_i^\cdot\overset{\mathbb{L}}{\otimes}_{\mathcal{O}_{X_{Y,i}}}\mathcal{F}_i^\cdot) \simeq \mathcal{E}_i^\cdot \overset{\mathbb{L}}{\otimes}_{\mathcal{O}_{X_{Z,i}}}f^{(Y_i,Z_i)}_{i+}\mathcal{F}_i^\cdot.\]
Moreover,
\[f^{(\mathcal{Y},\mathcal{Z})}_+(f^{!(\mathcal{Y},\mathcal{Z})}\mathcal{E}^\cdot\hat{\otimes}^{\mathbb{L}}_{\mathcal{O}_{\mathcal{X}_\mathcal{Y}}}\mathcal{F}^\cdot) \simeq \mathcal{E}^\cdot \hat{\otimes}^\mathbb{L}_{\mathcal{O}_{\mathcal{X}_\mathcal{Z}}}f^{(\mathcal{Y},\mathcal{Z})}_+\mathcal{F}^\cdot.\]

\end{prop}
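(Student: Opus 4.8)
The strategy is to reduce the relative projection formula to the non-relative one (Proposition \ref{proj}), exploiting the remark that relative $\Dhz$-modules do not modify the underlying $\calO$-module structure and that the relative functors $f^{!(\calY,\calZ)}$ and $f^{(\calY,\calZ)}_+$ agree with $Lf^*$ and $Rf_*$ on underlying $\calO$-modules (the properties listed after the two relative definitions). First I would work at finite level, fixing $i$ and writing $f_i : X_i \times Y_i \to X_i \times Z_i$. Since the morphism $\calF_i^\cdot \otimes f_i^{!(Y_i,Z_i)}\calE_i^\cdot \to$ (or from) $\calE_i^\cdot \otimes f_{i+}^{(Y_i,Z_i)}\calF_i^\cdot$ is built purely from the adjunction morphisms of Proposition \ref{rel_adj}, and since being an isomorphism of $\Dz_{X_i\times Z_i/Z_i}$-modules can be tested on the underlying $\calO_{X_i\times Z_i}$-modules, it suffices to check the statement after forgetting the $\Dz$-structure. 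But on underlying $\calO$-modules the relative functors become $Lf_i^*$ and $Rf_{i*}$, so the desired isomorphism is exactly the classical projection formula for the proper (indeed, of the form $\mathrm{id}\times g$ with $g$ proper, since $\calX,\calY,\calZ$ are abelian, hence $g$ proper) morphism $f_i$ of schemes.

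Concretely, I would: (1) construct the candidate morphism at level $i$ using the relative adjunction from Proposition \ref{rel_adj} together with the relative tensor product, exactly mimicking the construction of the non-relative projection morphism recalled in Proposition \ref{proj}; (2) observe that this morphism is $\Dz_{X_i\times Z_i/Z_i}$-linear by construction; (3) invoke the item ``as $\calO$-modules, $f_i^{!(Y_i,Z_i)}\calE_i^\cdot\simeq Lf_i^*\calE_i^\cdot$'' and ``as $\calO$-modules, $f^{(Y_i,Z_i)}_{i+}\calF_i^\cdot\simeq Rf_{i*}\calF_i^\cdot$'' from the two propositions above, reducing the claim to the statement that the underlying morphism of $\calO_{X_i\times Z_i}$-complexes is the classical projection-formula isomorphism $Rf_{i*}(Lf_i^*\calE_i^\cdot\otimes^{\mathbb{L}}\calF_i^\cdot)\simeq \calE_i^\cdot\otimes^{\mathbb{L}}Rf_{i*}\calF_i^\cdot$; (4) conclude at finite level. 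For the formal statement I would then check that the level-$i$ isomorphisms are compatible with reduction modulo $\pi^j$ (which follows from the compatibility of the adjunction morphisms already noted in the proof of Proposition \ref{rel_adj}, and from the fact that the classical projection formula isomorphisms are functorial hence compatible with the base-change $S_j\hookrightarrow S_i$), and apply $R\varprojlim_i$, using the definitions of $\hat{\otimes}^{\mathbb{L}}$, $f^{!(\calY,\calZ)}$ and $f^{(\calY,\calZ)}_+$ as $R\varprojlim_i$ of their level-$i$ counterparts, together with the fact that $R\varprojlim$ commutes with the constructions involved.

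The main obstacle, such as it is, is bookkeeping rather than conceptual: one must make sure the candidate morphism really is $\Dz$-linear and not merely $\calO$-linear, i.e. that using the \emph{relative} adjunction (Proposition \ref{rel_adj}) in the construction produces a morphism in $D^b_{qcoh}(\Dz_{X_{Z,i}/Z_i})$, and then that the forgetful functor to $\calO$-modules identifies it with the classical projection isomorphism. This is where the remark that ``relative $\Dhz$-modules have the same adjunction formulas as $\calO$-modules'' does the real work: the construction of the projection morphism is formally identical to the $\calO$-module one, so the comparison in step (3) is automatic. A secondary point is the passage to the limit: one needs the level-$i$ objects to lie in $D^b_{qcoh}$ (guaranteed by the preservation-of-quasi-coherence items in the propositions above) so that $R\varprojlim$ behaves well and Berthelot's characterization applies, ensuring the limiting object is again in $D^b_{qcoh}(\Dhz_{\calX_\calZ/\calZ})$ and that the limiting morphism is an isomorphism. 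I do not expect genuine difficulties beyond these verifications.
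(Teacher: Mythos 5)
Your proposal is correct and follows essentially the same route as the paper: construct the morphism at finite level from the relative adjunction of Proposition \ref{rel_adj} (which coincides with the $\calO$-module adjunction), check it is an isomorphism by passing to underlying $\calO$-modules where it becomes the classical projection formula, and then take $R\varprojlim$ using compatibility with reduction modulo $\pi^j$. The only extra content in your write-up is the (harmless, unneeded) properness remark; the paper's argument does not invoke it.
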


\begin{proof}

As explained earlier, as the adjunction formulas are the same for $\calO$-modules and relative $\Dh$-modules over the $X_i\times Y_i$ and $X_i\times Z_i$, there exists a morphism of $\Dh_{X_i\times Z_i/Z_i}$-modules
\[\mathcal{E}_i^\cdot \overset{\mathbb{L}}{\otimes}_{\mathcal{O}_{X_{Z,i}}}f^{(Y_i,Z_i)}_{i+}\mathcal{F}_i^\cdot\rightarrow f^{(Y_i,Z_i)}_{i+}(f_i^{!(Y_i,Z_i)}\mathcal{E}_i^\cdot\overset{\mathbb{L}}{\otimes}_{\mathcal{O}_{X_{Y,i}}}\mathcal{F}_i^\cdot).\]
To prove that it is an isomorphism, it is sufficient to consider the underlying $\calO$-modules, which are isomorphic as settled by the projection formula for $\calO$-modules.

For the second isomorphism, as adjunction formulas are compatible with reductions modulo $\pi^i$, so are the first isomorphims and one can take their inverse limit.

\end{proof}

For the base change formula, it works the same way. First recall the non-relative result.

\begin{prop}[Base change]\label{chgmt_base}
Let $\mathcal{X}$, $\mathcal{Y}$ and $\mathcal{Z}$ be three smooth formal schemes, $f:\mathcal{X}\rightarrow \mathcal{Y}$ and $\pi_\mathcal{X}:\mathcal{X}\times\mathcal{Z}\rightarrow \mathcal{X}$ and $\pi_\mathcal{Y}:\mathcal{Y}\times\mathcal{Z}\rightarrow \mathcal{Y}$ the projections. Consider the cartesian diagram
\[\xymatrix@=40pt{
	\mathcal{X}\times\mathcal{Z} \ar^{\pi_\mathcal{Y}}[r] \ar_{f\times id_\mathcal{Z}}[d] & \mathcal{X} \ar^f[d] \\
	\mathcal{Y}\times\mathcal{Z} \ar_{\pi_\mathcal{X}}[r] & \mathcal{Y}
}\]
There is an isomorphism of functors $\pi_\mathcal{Y}^!f_+\simeq(f\times id_\mathcal{Z})_+\pi_\mathcal{X}^!$.

\noindent There is also an isomorphism of relative functors $\pi_\mathcal{Y}^{!(\mathcal{Z})}f_+\simeq(f\times id_\mathcal{Z})_{+/\mathcal{Z}}\pi_\mathcal{X}^{!(\mathcal{Z})}$.
\end{prop}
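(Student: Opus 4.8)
The plan is to follow the pattern used throughout this section: build the base-change morphism by adjunction, reduce the claim that it is an isomorphism to the level of underlying $\calO$-modules, where it becomes classical flat base change, and only then pass to the $\pi$-adic limit.

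First I would reduce everything to a fixed level $i$. Reducing the cartesian diagram of formal schemes modulo $\pi^i$ gives again a cartesian diagram of $S_i$-schemes, in which the two projections $\pi_{X,i}:X_i\times Z_i\rightarrow X_i$ and $\pi_{Y,i}:Y_i\times Z_i\rightarrow Y_i$ are smooth (hence flat), while the vertical maps are $f_i$ and $f_i\times id_{Z_i}$. Since the four functors $\pi_\mathcal{Y}^!$, $f_+$, $(f\times id_\mathcal{Z})_+$ and $\pi_\mathcal{X}^!$ are by definition obtained as $R\varprojlim_i$ of the corresponding classical functors at finite level, it suffices to produce, naturally in $i$, an isomorphism $\pi_{Y,i}^!f_{i+}\simeq(f_i\times id_{Z_i})_+\pi_{X,i}^!$, and then to apply $R\varprojlim_i$.

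At level $i$ I would construct the morphism from the Virrion adjunction morphisms recalled above, together with the identification of the transfer bimodule of $f_i\times id_{Z_i}$ with the pullback along $\pi_{X,i}$ of the transfer bimodule of $f_i$ — a purely local statement, since $\Dz$ of a product is the external tensor product of the $\Dz$'s of the factors. This yields a morphism of $\Dz_{Y_i\times Z_i}$-modules
\[\pi_{Y,i}^!f_{i+}\mathcal{F}_i^\cdot\longrightarrow(f_i\times id_{Z_i})_+\pi_{X,i}^!\mathcal{F}_i^\cdot\]
(the relative-dimension shifts of the two projections coincide, so no shift survives). To see it is an isomorphism I would pass to the underlying complexes of $\calO_{Y_i\times Z_i}$-modules, where, after unwinding the transfer bimodules, the morphism becomes exactly the flat base change isomorphism $\pi_{Y,i}^*Rf_{i*}\mathcal{F}_i^\cdot\simeq R(f_i\times id_{Z_i})_*\pi_{X,i}^*\mathcal{F}_i^\cdot$, valid because $\pi_{Y,i}$ is flat and $f_i$ is of finite type over the noetherian base $S_i$, hence quasi-compact and quasi-separated; since a morphism of $\Dz$-modules is an isomorphism as soon as it is one on the underlying $\calO$-modules, this settles the finite level. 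Because all the ingredients (adjunction units and counits, transfer-bimodule identifications) are compatible with the transition maps $-\overset{\mathbb{L}}{\otimes}_{\calO_{X_{i+1}}}\calO_{X_i}$ of the projective systems, these isomorphisms assemble into a morphism of projective systems, and $R\varprojlim_i$ produces the formal isomorphism $\pi_\mathcal{Y}^!f_+\simeq(f\times id_\mathcal{Z})_+\pi_\mathcal{X}^!$.

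For the relative statement the argument is identical with the relative functors substituted throughout: the morphism is built from the relative adjunction morphisms of Proposition \ref{rel_adj}, and on the underlying $\calO$-modules it is again the flat base change isomorphism, because (as in the Properties above) the relative functors $f^{!(\mathcal{Y},\mathcal{Z})}$ and $f_+^{(\mathcal{Y},\mathcal{Z})}$ agree with $Lf^*$ and $Rf_*$ on $\calO$-modules; testing on $\calO$-modules and passing to $R\varprojlim_i$ concludes. The step I expect to require the most care is the passage to the limit: one must verify that the finite-level isomorphisms are genuinely natural in $i$, so that no $\varprojlim^1$ term obstructs the limit — which is exactly what the quasi-coherence hypotheses, through Berthelot's criterion, are there to guarantee — and one must remember that the classical $\calD$-module base change is an isomorphism only because the horizontal projections $\pi_\mathcal{X},\pi_\mathcal{Y}$ are smooth, so this flatness must be invoked explicitly rather than appealing to base change for an arbitrary cartesian square.
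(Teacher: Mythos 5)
Your proposal is correct and follows essentially the route the paper takes: the relative statement is proved exactly as the relative projection formula (relative adjunction morphism, then conservativity of the forgetful functor to $\calO$-modules, where the relative functors literally are $Lf^*$ and $Rf_*$), and the non-relative statement reduces, after identifying the transfer bimodule of $f\times id_\mathcal{Z}$ with the pullback along $\pi_\mathcal{X}$ of that of $f$, to flat base change for $\calO$-modules applied to $Rf_*(\Dhz_{\mathcal{Y}\leftarrow\mathcal{X}}\overset{\mathbb{L}}{\otimes}\mathcal{F}^\cdot)$ --- precisely the classical computation for projections that the paper invokes to avoid Kashiwara's theorem. One small caution: for the non-relative functors the underlying $\calO$-complex of $f_+\mathcal{F}^\cdot$ is \emph{not} $Rf_*\mathcal{F}^\cdot$ but $Rf_*$ of its relative de Rham complex, so the ``unwinding of the transfer bimodules'' you mention is where the actual work sits, and the adjunction-based construction of the comparison morphism presupposes $f$ (hence $f\times id_\mathcal{Z}$) proper, as it is in all of the paper's applications.
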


It is important to note that, as we are working on positive characteristic, the usual proof of this result (see \cite{hotta} e.g.) can't be applied directly as it uses Kashiwara's theorem, which is false in positive characteristic. For that reason, one has to consider only projections for the horizontal arrows, as the usual proof only uses Kashiwara's theorem for closed immersions.


\begin{prop}[Relative base change]\label{chgmt_base_rel}
Let $\mathcal{X}$, $\mathcal{Y}_1$, $\mathcal{Y}_2$ and $\mathcal{Z}$ be four smooth formal schemes, $\pi_1:\mathcal{X}\times\mathcal{Y}_1\times\mathcal{Z}\rightarrow \mathcal{X}\times\mathcal{Y}_1$ et $\pi_2:\mathcal{X}\times\mathcal{Y}_2\times\mathcal{Z}\rightarrow \mathcal{X}\times\mathcal{Y}_2$ the projections and $f:\mathcal{Y}_1\rightarrow \mathcal{Y}_2$.
Consider $g=id_\mathcal{X}\times f$, $h=id_\mathcal{X}\times f\times id_\mathcal{Z}$ and the following cartesian diagram
\[\xymatrix@=40pt{
	\mathcal{X}\times\mathcal{Y}_1\times\mathcal{Z} \ar^{\pi_1}[r] \ar_{h}[d] & \mathcal{X}\times\mathcal{Y}_1 \ar^{g}[d] \\
	\mathcal{X}\times\mathcal{Y}_2\times\mathcal{Z} \ar_{\pi_2}[r] & \mathcal{X}\times\mathcal{Y}_2
}\]
Then there is an isomorphism of functors $\pi^!_{2/\mathcal{Y}_2}g^{(\mathcal{Y}_1,\mathcal{Y}_2)}_+\simeq h^{(\mathcal{Y}_1,\mathcal{Y}_2)}_+\pi^!_{1/\mathcal{Y}_1}$.

\noindent There is also an isomorphism of functors $\pi^{!(\mathcal{Z})}_{2/\mathcal{Y}_2}g^{(\mathcal{Y}_1,\mathcal{Y}_2)}_+\simeq h^{(\mathcal{Y}_1,\mathcal{Y}_2)}_+\pi^{!(\mathcal{Z})}_{1/\mathcal{Y}_1}$.

\end{prop}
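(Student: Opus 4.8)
The plan is to mimic, essentially line by line, the proof of Property~\ref{chgmt_base}, replacing the absolute base $\calS$ by $\calY_1$ or $\calY_2$ throughout and using the relative adjunction data of Property~\ref{rel_adj} in place of the absolute one. As everywhere in this section, one first works at each finite level $i$: for every $i$ the square obtained by reduction modulo $\pi^{i}$ is again a cartesian square of $S_i$-schemes, with horizontal arrows the projections $\pi_{1,i}$, $\pi_{2,i}$ and vertical arrows $g_i=id_{X_i}\times f_i$ and $h_i=id_{X_i}\times f_i\times id_{Z_i}$; it then suffices to produce, for each $i$, a base change isomorphism compatible with the reductions modulo $\pi^{j}$ and to apply $R\varprojlim$, exactly as in the proofs of Properties~\ref{rel_adj} and~\ref{proj_rel}.

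At level $i$ one builds the base change morphism $\pi_{2,i}^{!(Z_i)}\,g_{i+}^{(Y_{1,i},Y_{2,i})}\to h_{i+}^{(Y_{1,i},Y_{2,i})}\,\pi_{1,i}^{!(Z_i)}$ from the relative adjunction morphisms of Property~\ref{rel_adj}, in the same formal way as in the absolute case (apply the unit $\mathrm{id}\to\pi_{1,i+}\pi_{1,i}^{!}$ and the counit $\pi_{2,i}^{!}\pi_{2,i+}\to\mathrm{id}$ together with the commutation $h_i\circ\pi_{1,i}=\pi_{2,i}\circ g_i$). Since the relative functors do not alter the underlying $\calO$-module structure — one has $g_{i+}^{(Y_{1,i},Y_{2,i})}\simeq Rg_{i*}$ and $\pi_{1,i}^{!(Z_i)}\simeq L\pi_{1,i}^{*}$ on $\calO$-modules, up to a shift common to both sides which plays no role, and likewise for $h_i$ and $\pi_{2,i}$ — this morphism, viewed on underlying $\calO$-modules, is the classical flat base change morphism attached to the above cartesian square.

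To conclude that it is an isomorphism, it is enough, by the principle recalled at the beginning of this section (a morphism of relative $\Dhz$-modules that is an isomorphism on the underlying $\calO$-modules is one), to check this for the underlying $\calO$-modules; there it becomes the cohomology-and-flat-base-change isomorphism (see \cite{hotta}), which applies because the horizontal maps $\pi_{1,i},\pi_{2,i}$ are flat, being base changes of the smooth map $Z_i\to S_i$. This is exactly the place where, as in Property~\ref{chgmt_base}, one uses that the horizontal arrows are projections: the argument does not invoke Kashiwara's theorem, which fails in characteristic $p$. Finally, the isomorphisms obtained at the various levels $i$ are compatible with the reductions modulo $\pi^{j}$, because the adjunction morphisms of Property~\ref{rel_adj} are; passing to $R\varprojlim$ gives $\pi^{!(\mathcal{Z})}_{2/\mathcal{Y}_2}g^{(\mathcal{Y}_1,\mathcal{Y}_2)}_+\simeq h^{(\mathcal{Y}_1,\mathcal{Y}_2)}_+\pi^{!(\mathcal{Z})}_{1/\mathcal{Y}_1}$. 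The first stated isomorphism, with $\pi^{!}$ replacing $\pi^{!(\mathcal{Z})}$, is the special case in which the auxiliary relativity along $\mathcal{Z}$ is dropped, and is proved word for word in the same way.

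The only genuine obstacle is bookkeeping: one must check that the base change morphism produced from the relative adjunctions of Property~\ref{rel_adj} restricts, on underlying $\calO$-modules, to the usual $\calO$-module base change morphism, so that the classical isomorphism can be transported; this rests on the compatibility of the relative $\Dhz$-adjunctions with the $\calO$-module adjunctions pointed out in the remark following Property~\ref{rel_adj}. No idea beyond those already used for Properties~\ref{chgmt_base}, \ref{rel_adj} and~\ref{proj_rel} is required.
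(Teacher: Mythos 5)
Your proof is correct and follows exactly the route the paper takes: the paper's entire proof is the remark that "the idea is the same as for the relative projection formula," i.e.\ build the morphism from the relative adjunctions of Property~\ref{rel_adj}, check it is an isomorphism on underlying $\calO$-modules via classical flat base change, and pass to $R\varprojlim$ using compatibility with reduction modulo $\pi^{j}$. Your version is in fact more detailed than the paper's; the only cosmetic discrepancy is that the paper notes Kashiwara's theorem is not needed in the relative case (so the horizontal arrows need not be projections there), whereas you tie the argument to the projections — harmless, since the statement only concerns projections.
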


The idea of the proof is the same as the one for the relative projection formula.

Note that, as Kashiwara's theorem isn't needed here, one can write this result for horizontal arrows being proper and not necessary projections. However, as we won't need more generality in the following this result is enough.

Another important "calculus rule" is that the relative functors are compatible with the non-relative ones. For the sake of clarity the following morphisms will be supposed to be projections, but these results are true for all morphisms as they are nothing more than elementary calculations.

\begin{prop}\label{compat}
Consider the following projections

	\[\xymatrix@=30pt{
			\mathcal{X}\times\mathcal{Y}\times\mathcal{Z} \ar^{\pi_\mathcal{YZ}}[r] \ar_{\pi_\mathcal{XZ}}[d] & \mathcal{Y}\times\mathcal{Z} \ar^{f}[d] \\
			\mathcal{X}\times \mathcal{Z} \ar_{g}[r] & \mathcal{Z}
	}\]
	
Consider $\mathcal{E}^\cdot\in D^b_{qcoh}(\mathcal{D}_{\mathcal{Z}/\mathcal{S}})$ and $\mathcal{F}^\cdot\in D^b_{qcoh}(\mathcal{D}_{\mathcal{X}\times\mathcal{Y}\times\mathcal{Z}/\mathcal{Y}})$. The following isomorphims hold:

\[\pi_{X_iZ_i/S_i}^{!(Y_i)}\circ g_{i/S_i}^\flat(\mathcal{E}_i^\cdot) \simeq\pi_{Y_iZ_i/Y_i}^\flat\circ f_{i/S_i}^{!(Y_i)}(\mathcal{E}_i^\cdot) \text{ } (\text{in the category }D^b_{qcoh}(\Dz_{X_i\times Y_i\times Z_i/Y_i})),\]
\[g_{i+/S_i}\circ\pi_{X_iZ_i+/S_i}^{(Y_i)}(\mathcal{F}_i^\cdot) \simeq f_{i+/S_i}^{(Y_i)}\circ\pi_{Y_iZ_i+/Y_i}(\mathcal{F}_i^\cdot)\text{ } (\text{in the category }D^b_{qcoh}(\Dz_{Z_i/S_i})),\]

\[\pi_{\mathcal{XZ}/\mathcal{S}}^{!(\mathcal{Y})}\circ g_{/\mathcal{S}}^\flat(\mathcal{E}^\cdot) \simeq\pi_{\mathcal{YZ}/\mathcal{Y}}^\flat\circ f_{/\mathcal{S}}^{!(\mathcal{Y})}(\mathcal{E}^\cdot)\text{ } (\text{in the category }D^b_{qcoh}(\Dhz_{\mathcal{X}\times\mathcal{Y}\times\mathcal{Z}/\mathcal{Y}})),\]
\[g_{+/\mathcal{S}}\circ\pi_{\mathcal{XZ}+/\mathcal{S}}^{(\mathcal{Y})}(\mathcal{F}^\cdot) \simeq f_{+/\mathcal{S}}^{(\mathcal{Y})}\circ\pi_{\mathcal{YZ}+/\mathcal{Y}}(\mathcal{F}^\cdot)\text{ } (\text{in the category }D^b_{qcoh}(\Dhz_{\mathcal{Z}/\mathcal{S}})).\]
\end{prop}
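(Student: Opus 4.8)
The plan is to reduce the four asserted isomorphisms to the kind of elementary verification the author has been using throughout this subsection: as noted in the remark following Proposition \ref{rel_adj}, a morphism of relative $\Dhz$-modules is an isomorphism as soon as it is one on the underlying $\calO$-modules, because the relative functors do not alter the $\Dhz$-module structure. So I would first construct the comparison morphisms in each of the four lines, and then check they are isomorphisms on $\calO$-modules, where the statement becomes the classical compatibility of $\calO$-module pullback and pushforward along the two legs of a cartesian square (flat base change for quasi-coherent $\calO_{X_i}$-modules along the projections, which is where the hypothesis that the horizontal arrows are projections — hence flat and, being base changes of $g$, with the needed properness for the pushforward statement — is used). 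For the first line (the $f^!/f^\flat$ compatibility) the morphism comes from the transitivity formulas listed in the Proposition after the definition of relative inverse image, together with the fact that $\pi_{\calX\calZ}$ and $\pi_{\calY\calZ}$, $g$ and $f$ fit into a cartesian square; for the second line (the $f_+$ compatibility) it comes from the adjunction morphisms of Proposition \ref{rel_adj} together with the base change isomorphism of Proposition \ref{chgmt_base_rel}, or more directly from the explicit description of $f_{i+}$ via the transfer bimodule and $Rf_{i*}$ combined with the classical commutation of $Rf_{i*}$ with $R\pi_{*}$ along a cartesian square.

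Concretely, for each of the two "level $i$" isomorphisms I would argue as follows. Unwind $\pi_{X_iZ_i}^{!(Y_i)}\circ g_i^\flat$ and $\pi_{Y_iZ_i}^\flat\circ f_i^{!(Y_i)}$ using the bullet point "as $\calO$-modules, $f_i^{!(Y_i,Z_i)}\calE_i^\cdot\simeq Lf_i^*\calE_i^\cdot$": both sides become $L\pi^*$-type pullbacks of $\calE_i^\cdot$ along the two composite maps $\calX\times\calY\times\calZ\to\calZ$, which agree because the square commutes, and the resulting $\calO$-linear isomorphism is automatically $\Dhz$-linear because the only derivations acting are those along $\calX\times\calY$ in both cases and they are transported identically. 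Dually, unwind $g_{i+}\circ\pi_{X_iZ_i+}^{(Y_i)}$ and $f_{i+}^{(Y_i)}\circ\pi_{Y_iZ_i+}$ using "as $\calO$-modules, $f_{i+}^{(Y_i,Z_i)}\calE_i^\cdot\simeq Rf_{i*}\calE_i^\cdot$": both sides become $R(\text{proj})_*\calF_i^\cdot$ along the two composites $\calX\times\calY\times\calZ\to\calZ$, and these agree by the classical base change/commutation of higher direct images for the cartesian square of the statement (valid here because all maps are projections, hence the usual flat-base-change hypotheses hold). Again the resulting $\calO$-isomorphism is $\Dhz$-linear for structural reasons, so it is an isomorphism in the stated relative derived category. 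I would spell this out once and remark that the other level-$i$ case is strictly dual.

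For the two formal (hatted) isomorphisms, I would simply pass to the inverse limit. Each formal relative functor is by definition $R\varprojlim$ of its level-$i$ analogue, and the level-$i$ isomorphisms just constructed are compatible with the reductions modulo $\pi^j$ — this compatibility itself follows from the compatibility of the classical $Lf_i^*$, $Rf_{i*}$ with base change, exactly as in the proofs of Propositions \ref{proj_rel} and \ref{chgmt_base_rel} — so applying $R\varprojlim$ yields the desired isomorphisms in $D^b_{qcoh}(\Dhz_{\calX\times\calY\times\calZ/\calY})$ and $D^b_{qcoh}(\Dhz_{\calZ/\calS})$ respectively. Quasi-coherence of all the terms is guaranteed by the preservation results in the two Propositions on relative inverse and direct images, so the $R\varprojlim$ behaves well and no boundedness or convergence issue arises.

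The main obstacle is essentially bookkeeping rather than mathematics: one must keep track of which base ($/\calS$, $/\calY$, $/\calZ$) each functor is taken over and check that the classical $\calO$-module base change isomorphism is compatible with the relative $\Dhz$-structures on the nose, i.e. that the identification of the two composite projections genuinely transports the relative connection on one side to the relative connection on the other. Since in all four composites the derivations that act are exactly those along $\calX$ (for the first and third lines, also along a factor that is pulled back trivially), this compatibility is automatic, but making the argument airtight requires carefully writing out the transfer bimodules $\calD^{(0)(Y_i,Z_i)}_{\bullet\to\bullet}$ and checking the canonical identifications commute with the base-change maps — which is the one place where a reader would want to see the indices handled explicitly.
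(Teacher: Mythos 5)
Your proposal is correct and is exactly the argument the paper intends: the paper omits the proof, declaring these compatibilities ``nothing more than elementary calculations'', and your strategy of reducing to the underlying $\calO$-modules (where the relative functors coincide with $Lf^*$ and $Rf_*$), observing that the $\Dhz$-linearity is automatic, and then applying $R\varprojlim$ is the same template used for Propositions \ref{proj_rel} and \ref{chgmt_base_rel}. One minor correction of terminology: at level $i$ what is actually invoked is not a base-change theorem but the transitivity isomorphisms $L(g_i\circ\pi_{X_iZ_i})^*\simeq L\pi_{X_iZ_i}^*\circ Lg_i^*$ and $R(g_i\circ\pi_{X_iZ_i})_*\simeq Rg_{i*}\circ R\pi_{X_iZ_i*}$ applied to the two equal composites $g\circ\pi_{\calX\calZ}=f\circ\pi_{\calY\calZ}$.
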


The isomorphisms in this section may be troublesome to understand because of the large amount of subscripts and superscript, so keep in mind one thing: if you are writing functors and objects that make sense (especially in term of what are the bases whose sheaves are considered relative to), all the expected isomorphisms will hold. The only thing to be carefull of is that the horizontal arrows for the non-relative base change have to be projections, but as in the following we will work only with projections, there will be no problem.

\subsection{Seesaw principle}

One interesting application of relative $\Dhz$-modules is that one can find a generalization of the seesaw principle (see \cite{Mum} corollary 5.6 for the usual version). Even though we won't use it in the rest of this article, it is an interesting result on its own.

\begin{prop}[Seesaw principle]\label{D-bascule}
Let $\mathcal{X}$, $\mathcal{Y}$ and $\mathcal{Z}$ be three formal schemes, $\mathcal{X}$ and $\mathcal{Y}$ complete and $\mathcal{L}$ a sheaf of $\Dhz_{\mathcal{X}\times\mathcal{Y}\times\mathcal{Z}/\mathcal{Y}\times\mathcal{Z}}$-modules $\mathcal{O}_{\mathcal{X}\times\mathcal{Y}\times\mathcal{Z}}$-invertible.

Consider $\pi_\mathcal{XY}:\mathcal{X}\times\mathcal{Y}\times\mathcal{Z}\rightarrow \mathcal{X}\times\mathcal{Y}$ the projection, for all $(x,y)\in\mathcal{X}\times\mathcal{Y}$, 
\[\iota_{(x,y)}:\mathcal{Z}\simeq \{x\}\times\{y\}\times \mathcal{Z}\hookrightarrow\mathcal{X}\times\mathcal{Y}\times\mathcal{Z}\]
and for all $z\in \mathcal{Z}$, 
\[\tilde{\iota}_z:\mathcal{X}\times\mathcal{Y}\simeq \mathcal{X}\times\mathcal{Y}\times\{z\}\hookrightarrow \mathcal{X}\times\mathcal{Y}\times\mathcal{Z}.\]
\begin{itemize}
\item If $\forall (x,y)\in \mathcal{X}\times\mathcal{Y}$, $\iota_{(x,y)}^*\mathcal{L}\simeq\mathcal{O}_\mathcal{Z}$ (as $\mathcal{O}_\mathcal{Z}$-modules),
then there exists a $\Dhz_{\mathcal{X}\times\mathcal{Y}/\mathcal{Y}}$-module $\mathcal{O}_{\mathcal{X}\times\mathcal{Y}}$-invertible $\mathcal{M}$ such that $\mathcal{L}\simeq\pi_{\mathcal{XY}/\mathcal{Y}}^{!(\mathcal{Z})}\mathcal{M}$ (as $\Dhz_{\mathcal{X}\times\mathcal{Y}\times\mathcal{Z}/\mathcal{Y}\times\mathcal{Z}}$-modules).
\item If moreover there exists $z\in \mathcal{Z}$ such that $\tilde{\iota}_{z/\mathcal{Y}}^{!(\mathcal{Z})}\mathcal{L}\simeq\mathcal{O}_{\mathcal{X}\times\mathcal{Y}}$ (as $\Dhz_{\mathcal{X}\times\mathcal{Y}/\mathcal{Y}}$-modules),
then $\mathcal{L}\simeq\mathcal{O}_{\mathcal{X}\times\mathcal{Y}\times\mathcal{Z}}$ (as $\Dhz_{\mathcal{X}\times\mathcal{Y}\times\mathcal{Z}/\mathcal{Y}\times\mathcal{Z}}$-modules).
\end{itemize}
\end{prop}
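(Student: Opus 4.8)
The plan is to mimic the classical proof of the seesaw principle (Mumford, Corollary 5.6) while carefully tracking the relative $\Dhz$-structures, and to reduce everything to a statement about $\calO$-modules via the reduction mod $\pi^i$. The key observation, already emphasised in the excerpt, is that the relative functors $\pi_{\calX\calY/\calY}^{!(\calZ)}$ do not modify the underlying $\calO$-module structure (they agree with $L\pi_{\calX\calY}^*$ as $\calO$-modules), so that an isomorphism of relative $\Dhz$-modules can be checked on underlying $\calO$-modules once one knows the connexion is respected. Thus the strategy is: first construct the candidate $\calM$ and the candidate isomorphism at the $\calO$-module level using the classical seesaw argument, then upgrade it to a $\Dhz$-module isomorphism by observing that the relative structure along $\calY\times\calZ$ is transported correctly.

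Concretely, I would set $\calM := \pi_{\calX\calY *}\calL$ (more precisely $R\pi_{\calX\calY *}\calL$, which under the hypothesis $\iota_{(x,y)}^*\calL\simeq\calO_\calZ$ for all $(x,y)$ and cohomology-and-base-change is an invertible $\calO_{\calX\calY}$-module concentrated in degree $0$, because $\calX$ and $\calY$ are complete so $\calZ$ — wait, rather $\calX,\calY$ complete forces the relevant fibres to be proper; here it is $\calZ$ playing no completeness role, so more carefully: the fibrewise vanishing $H^{>0}(\calZ,\iota_{(x,y)}^*\calL)=0$ and $H^0=$ line are what is needed, exactly as in Mumford). The adjunction unit $\calL \to \pi_{\calX\calY/\calY}^{!(\calZ)}\pi_{\calX\calY+/\calY}^{(\calZ)}\calL = \pi_{\calX\calY/\calY}^{!(\calZ)}\calM$ of Proposition \ref{rel_adj} gives a canonical morphism of relative $\Dhz$-modules; on underlying $\calO$-modules it is the classical base-change morphism $\calL \to \pi_{\calX\calY}^*R\pi_{\calX\calY *}\calL$, which by the $\calO$-module seesaw principle (applied to each $\calL_i = \calL \otimes^{\mathbb L} \calO_{X_i\times Y_i\times Z_i}$) is an isomorphism. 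Hence the unit is an isomorphism of relative $\Dhz$-modules, giving the first bullet. For the second bullet, if additionally $\tilde\iota_{z/\calY}^{!(\calZ)}\calL \simeq \calO_{\calX\calY}$, then applying $\tilde\iota_z^{!(\calZ)}$ to $\calL\simeq\pi_{\calX\calY/\calY}^{!(\calZ)}\calM$ and using the transitivity formulas for the relative inverse image (first bullet of the \textbf{prop} on relative inverse images, $\pi_{\calX\calY}\circ\tilde\iota_z = id$) yields $\calM \simeq \calO_{\calX\calY}$ as $\Dhz_{\calX\calY/\calY}$-modules, whence $\calL\simeq\pi_{\calX\calY/\calY}^{!(\calZ)}\calO_{\calX\calY}\simeq\calO_{\calX\times\calY\times\calZ}$ by the normalisation $f^{!(\calY,\calZ)}\Dhz_{\calX\times\calZ/\calZ}\simeq\Dhz_{\calX\times\calY/\calY}$ (third bullet of that same proposition, applied with the invertible module in place of $\Dhz$).

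The main obstacle I anticipate is justifying that $R\pi_{\calX\calY *}\calL$ is a genuine invertible $\calO_{\calX\calY}$-module in the formal/$\pi$-adic setting, i.e. that cohomology-and-base-change behaves well for the projective system $(\calL_i)_i$. One must check that $R\pi_{\calX\calY *}\calL = R\varprojlim_i R\pi_{X_i\times Y_i *}\calL_i$, that each $R\pi_{X_i\times Y_i *}\calL_i$ is a line bundle on $X_i\times Y_i$ by the classical seesaw principle (the fibrewise hypothesis $\iota_{(x,y)}^*\calL\simeq\calO_\calZ$ passes to each level since it can be tested mod $\pi^i$), and that the transition maps are the expected ones so that the limit is $\pi$-adically complete and invertible. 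This is where the quasi-coherence formalism of Berthelot (the cited \textbf{theo} of Berthelot) and the compatibility of the relative direct image with reduction mod $\pi^j$ (used already in the proof of Proposition \ref{rel_adj}) do the work; once this is in hand, the rest is a formal consequence of the adjunction and transitivity properties recorded above. A secondary point to be careful about is that the hypothesis is stated for $\calL$ a $\Dhz_{\calX\times\calY\times\calZ/\calY\times\calZ}$-module, so the derivations act only along $\calX$; this is exactly what makes $\pi_{\calX\calY/\calY}^{!(\calZ)}$ (derivations along $\calX$, none along $\calY$, base-changed along $\calZ$) the right target, and one should spell out that the unit morphism indeed lands in the correct relative category — but this is precisely the content of Proposition \ref{rel_adj}.
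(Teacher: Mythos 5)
Your proposal is correct and follows essentially the same route as the paper: take $\mathcal{M}$ to be the relative direct image $\pi_{\mathcal{XY}+/\mathcal{Y}}^{(\mathcal{Z})}\mathcal{L}$ (which is $\pi_{\mathcal{XY}*}\mathcal{L}$ on underlying $\calO$-modules), use the relative adjunction of Proposition \ref{rel_adj} to get a canonical morphism of relative $\Dhz$-modules that the classical seesaw principle shows is an isomorphism of $\calO$-modules, handle the second bullet via $\pi_{\mathcal{XY}}\circ\tilde\iota_z=id$ and transitivity, and pass to the formal case by compatibility with reduction mod $\pi^i$. The only slip is the direction of the adjunction arrow: for this adjunction $\pi^{!(\mathcal{Z})}$ is the \emph{left} adjoint, so the canonical morphism is the counit $\pi_{\mathcal{XY}/\mathcal{Y}}^{!(\mathcal{Z})}\pi_{\mathcal{XY}+/\mathcal{Y}}^{(\mathcal{Z})}\mathcal{L}\rightarrow\mathcal{L}$ rather than a unit into it — which is what the paper uses and which serves your argument equally well.
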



\begin{proof}

In order to prove this $\calD$-seesaw-principle, we will first prove it in the case when $X$, $Y$ and $Z$ are non-formal schemes. The main idea is to adjust the proof of \cite{Mum} in order to take into account the structure of $\calD$-module of the sheaves.

The first hypothesis allows to apply the seesaw principle for $\calO$-modules: there exists an invertible $\calO_{X\times Y}$-module $\calM$ such that $\calL\simeq \pi^*_{XY}\calM$. Looking in details to the proof of \cite{Mum} corollary 5.6, one sees that $\calM$ can be taken as $\pi_{XY*}\calL$. The idea is then to find an appropriate structure of $\calD_{X\times Y/Y}$-module on $\pi_{XY*}\calL$. Consider
\[\calM=\pi_{XY*}(\calD^{(Y,Y\times Z)}_{X\times Y\leftarrow X\times Y\times Z}\otimes_{\calD_{X\times Y\times Z/Y\times Z}}\calL).\]
It is important to note that none of the functors are derivated, hence $\calM$ is a sheaf (not a complex). Its structure of $\calD_{X\times Y/Y}$-module is defined by the structure of $\pi_{XY}^{-1}\calD_{X\times Y\times Z/Y\times Z}$-module of $\calD^{(Y,Y\times Z)}_{X\times Y\leftarrow X\times Y\times Z}$ (the same way as it is for $\pi_{XY+/Y}^{(Z)}\calL$). Finally, as $\calO_{X\times Y}$-modules, $\calM\simeq\pi_{XY*}\calL$, so the seesaw principle for $\calO$-modules sates that $\calM$ is $\calO_{X\times Y}$-invertible and, as $\calO_{X\times Y\times Z}$-modules, $\pi_{XY}^*\calM\simeq\calL$.

However, there is an adjunction morphism $\pi_{XY/Y}^{!(Z)}\pi_{XY*}(\calD^{(Y,Y\times Z)}_{X\times Y\leftarrow X\times Y\times Z}\otimes_{\calD_{X\times Y\times Z/Y\times Z}}\calL)\rightarrow\calL$ in the category of $\calD_{X\times Y\times Z/Y\times Z}$-modules (it is the non-derivated one used in the proof of property \ref{rel_adj}, as $\calD_{X\times Y\times Z/Y\times Z}\otimes_{\pi_{XY}^{-1}\calD_{X\times Y/Y}}\pi_{XY}^{-1}$ is exact because $\pi_{XY}$ is a projection). So there is a morphism of $\calD_{X\times Y\times Z/Y\times Z}$-modules $\pi_{XY/Y}^{!(Z)}\calM\rightarrow\calL$ which is an isomorphism of $\calO_{X\times Y\times Z}$-modules, so it is an isomorphism of $\calD_{X\times Y\times Z/Y\times Z}$-modules.

If moreover there exists $z\in Z$ such that $\tilde{\iota}_{z/Y}^{!(Z)}\mathcal{L}\simeq\mathcal{O}_{X\times Y}$ as $\calD_{X\times{Y}/Y}$-modules, then as $\pi_{XY}\circ\tilde{\iota}_z=id_{Y\times Z}$
\[\calM\simeq\tilde{\iota}_{z/{Y}}^{!({Z})}\pi^{!(Z)}_{XY/Y}\calM\simeq\tilde{\iota}_{z/{Y}}^{!({Z})}\calL\simeq\calO_{X\times Y},\]
as $\calD_{X\times Y/Y}$-modules. Hence, $\calL\simeq\calO_{X\times Y\times Z}$ as $\calD_{X\times Y\times Z/Y\times Z}$-modules.

The seesaw principle is then proved for all the $X_i$, $Y_i$ and $Z_i$, reductions modulo $\pi^i$ of $\calX$, $\calY$ and $\calZ$. In order to prove the seesaw principle for formal schemes, it only necessary to remark that the isomorphisms expected are obtained via adjunction formulas, and hence are compatible with reductions modulo $\pi^j$. Considering the $\Dhz_{\calX\times\calY/\calY}$-module
\[\calM=\varprojlim\calM_i=\varprojlim \pi_{X_iY_i*}(\calD^{(Y_i,Y_i\times Z_i)}_{X_i\times Y_i\leftarrow X_i\times Y_i\times Z_i}\otimes_{\Dz_{X_i\times Y_i\times Z_i/Y_i\times Z_i}}\calL_i),\]
one have the expected isomorphism of $\Dhz_{\calX\times\calY\times\calZ/\calY\times\calZ}$-modules $\calL\simeq\pi_{\calX\calY/\calY}^{!(\calZ)}\calM$. Moreover, as the $\calM_i$ are $\calO$-invertible, so is $\calM$.

The second part of the seesaw principle is proved the same way as in the non-formal case.

\end{proof}

\begin{rem}
This result is written in its more general case but can be greatly simplified by considering $\calY=\calS$. Moreover, considering $\calX=\calS$ one can regain the seesaw principle for $\calO$-modules.
\end{rem}

\section{Fourier-Mukai transform for $\Dhz$-modules}

In this section, we will first construct $\Dhz$-Fourier-Mukai transforms of a given kernel on formal schemes (non necessary abelian) then consider the $\Dhz$-Fourier-Mukai transform of Poincar\'e kernel on formal abelian schemes.

As in the previous section, let $V$ be a discrete valuation ring with uniformizer $\pi$, $S=Spec(V)$, for all $i$ $V_i=\faktor{V}{\pi^i V}$ and $S_i=Spec{V_i}$ and $\calS=Spf{S}$. The non-necessary-abelian formal schemes will be denoted $\calX$, $\calY$ and $\calZ$ and the formal abelian scheme will be denoted $\calA$.

\subsection{Fourier-Mukai transforms of a given kernel}

This section is an adaptation of what is done for $\calO$-modules over classical varieties in \cite{Huy}. As all the results needed have been previously settled, this section will be close to the section 5.1.

Let $\calX$, $\calY$ and $\calZ$ be three quasi-compact smooth formal schemes.

Given $\calX$, $\calY$ and a kernel $\calP\in D^b_{qcoh}(\calO_{\calX\times\calY})$ one can define up to four different Fourier-Mukai transforms depending on the additional structure of $\Dhz$-module of $\calP$ we consider.

\begin{defi}

Let $p:\calX\times\calY\rightarrow\calX$ and $q:\calX\times\calY\rightarrow\calY$ be the projections, $\calP\in D^b_{qcoh}(\calO_{\calX\times\calY})$, $\calQ\in D^b_{qcoh}(\Dhz_{\calX\times\calY/\calS})$ and $\calR\in D^b_{qcoh}(\Dhz_{\calX\times\calY/\calY})$.

\begin{itemize}

\item The Fourier-Mukai transform of kernel $\calP$ is the functor $\Phi_\mathcal{P} : D^b_{qcoh}(\mathcal{O}_\mathcal{X})\rightarrow D^b_{qcoh}(\mathcal{O}_\mathcal{Y})$ defined as
\[\Phi_\mathcal{P}(\mathcal{E}^\cdot)=Rq_*(\mathcal{P}\hat{\otimes}^{\mathbb{L}}_{\mathcal{O}_{\mathcal{X}\times\mathcal{Y}}}p^*\mathcal{E}^\cdot),\]
where the functor $p^*$ is defined as $p^*\mathcal{E}^\cdot=R\varprojlim p_i^*\mathcal{E}_i^\cdot$.

\item The Fourier-Mukai transform from $\calX$ to $\calY$ of kernel $\calQ$ is the functor $\Phi^{\Dhz}_\mathcal{Q} : D^b_{qcoh}(\Dhz_{\mathcal{X/S}})\rightarrow D^b_{qcoh}(\Dhz_\mathcal{Y/S})$ defined as
\[\Phi^{\Dhz}_\mathcal{Q}(\mathcal{E}^\cdot)=q_+(\mathcal{Q}\hat{\otimes}^{\mathbb{L}}_{\mathcal{O}_{\mathcal{X}\times\mathcal{Y}}}p^\flat\mathcal{E}^\cdot),\]
where the functor $p^\flat$ is the inverse image without the shift: $p^!\calE=p^\flat\calE[dim(\calX)-dim(\calY)]$.

\item The Fourier-Mukai transform from $\calX$ to $\calY$ of kernel $\calR$ (relative to $\calY$) is the functor $\Phi^{\mathcal{X}\rightarrow \mathcal{Y}}_\mathcal{R} : D^b_{qcoh}(\mathcal{D}_\mathcal{X/S})\rightarrow D^b_{qcoh}(\mathcal{O}_\mathcal{Y})$ defined as
\[\Phi^{\mathcal{X}\rightarrow \mathcal{Y}}_\mathcal{R}(\mathcal{E}^\cdot)=q_{+/\mathcal{Y}}(\mathcal{R}\hat{\otimes}^{\mathbb{L}}_{\mathcal{O}_{\mathcal{X}\times \mathcal{Y}}}p^{!(\mathcal{Y})}\mathcal{E}^\cdot).\]

\item The Fourier-Mukai transform from $\calY$ to $\calX$ of kernel $\calR$ (relative to $\calY$) is the functor $\Phi^{\mathcal{X}\leftarrow \mathcal{Y}}_\mathcal{R} : D^b_{qcoh}(\mathcal{O}_\mathcal{Y})\rightarrow D^b_{qcoh}(\mathcal{D}_\mathcal{X/S})$ defined as
\[\Phi^{\mathcal{X}\leftarrow\mathcal{Y}}_\mathcal{R}(\mathcal{E}^\cdot)=p_+^{(\mathcal{Y})}(\mathcal{Q}\hat{\otimes}^{\mathbb{L}}_{\mathcal{O}_{\mathcal{X}\times\mathcal{Y}}}q^\flat_{/\calY}\mathcal{E}^\cdot).\]

\end{itemize}

\end{defi}

As expected, the composition of two Fourier-Mukai transforms is still a Fourier-Mukai transform. However, given $\calX$, $\calY$ and $\calZ$ three formal schemes, there are eight possible compositions of two Fourier-Mukai transforms, depending on which formal scheme the derivations are relative to.

\begin{prop}

Let $\calX$, $\calY$ and $\calZ$ be three smooth quasi-compact formal schemes and $\pi_{\calX\calY}$, $\pi_{\calX\calZ}$ and $\pi_{\calY\calZ}$ the projections from $\calX\times\calY\times\calZ$ to $\calX\times\calY$, $\calX\times\calZ$ and $\calY\times\calZ$ respectively.

\begin{itemize}

\item Given $\calP\in D^b_{qcoh}(\calO_{\calX\times\calY})$ and $\calQ\in D^b_{qcoh}(\calO_{\calY\times\calZ})$, denote
\[\calR=\pi_{\calX\calZ*}(\pi^*_{\calX\calY}\calP\hat{\otimes}^\mathbb{L}_{\calO_{\calX\times\calY\times\calZ}}\pi^*_{\calY\calZ}\calQ)\in D^b_{qcoh}(\calO_{\calX\times\calZ}).\]
Then $\Phi_\calQ\circ\Phi_\calP\simeq\Phi_\calR$.

\item If $\calX$ is proper, given $\calP\in D^b_{qcoh}(\Dhz_{\calX\times\calY/\calY})$ and $\calQ\in D^b_{qcoh}(\calO_{\calY\times\calZ})$, denote
\[\calR=\pi_{\calX\calZ+/\calZ}^{(\calY)}(\pi^{!(\calZ)}_{\calX\calY/\calY}\calP\hat{\otimes}^\mathbb{L}_{\calO_{\calX\times\calY\times\calZ}}\pi^*_{\calY\calZ}\calQ)\in D^b_{qcoh}(\Dhz_{\calX\times\calZ/\calZ}).\]
Then $\Phi_\calQ\circ\Phi^{\calX\rightarrow\calY}_\calP\simeq\Phi^{\calX\rightarrow\calZ}_\calR$.

\item If $\calY$ is proper, given $\calP\in D^b_{qcoh}(\Dhz_{\calX\times\calY/\calX})$ and $\calQ\in D^b_{qcoh}(\Dhz_{\calY\times\calZ/\calZ})$, denote
\[\calR=\pi_{\calX\calZ+/\calX\times\calZ}(\pi^{!(\calZ)}_{\calX\calY/\calX}\calP\hat{\otimes}^\mathbb{L}_{\calO_{\calX\times\calY\times\calZ}}\pi^{!(\calX)}_{\calY\calZ/\calZ}\calQ)\in D^b_{qcoh}(\calO_{\calX\times\calZ}).\]
Then $\Phi^{\calY\rightarrow\calZ}_\calQ\circ\Phi^{\calY\leftarrow\calX}_\calP\simeq\Phi_\calR$.

\item If $\calZ$ is proper, given $\calP\in D^b_{qcoh}(\calO_{\calX\times\calY})$ and $\calQ\in D^b_{qcoh}(\Dhz_{\calY\times\calZ/\calY})$, denote
\[\calR=\pi_{\calX\calZ+/\calX}^{(\calY)}(\pi^*_{\calX\calY}\calP\hat{\otimes}^\mathbb{L}_{\calO_{\calX\times\calY\times\calZ}}\pi^{!(\calX)}_{\calY\calZ/\calY}\calQ)\in D^b_{qcoh}(\Dhz_{\calX\times\calZ/\calX}).\]
Then $\Phi^{\calZ\leftarrow\calY}_\calQ\circ\Phi_\calP\simeq\Phi^{\calZ\leftarrow\calX}_\calR$.

\item If $\calX$ and $\calY$ are proper, given $\calP\in D^b_{qcoh}(\Dhz_{\calX\times\calY})$ and $\calQ\in D^b_{qcoh}(\Dhz_{\calY\times\calZ/\calZ})$, denote
\[\calR=\pi_{\calX\calZ+/\calZ}(\pi^{!(\calZ)}_{\calX\calY}\calP\hat{\otimes}^\mathbb{L}_{\calO_{\calX\times\calY\times\calZ}}\pi^\flat_{\calY\calZ/\calZ}\calQ)\in D^b_{qcoh}(\Dhz_{\calX\times\calZ/\calZ}).\]
Then $\Phi^{\calY\rightarrow\calZ}_\calQ\circ\Phi^{\Dhz}_\calP\simeq\Phi^{\calX\rightarrow\calZ}_\calR$.

\item If $\calY$ and $\calZ$ are proper, given $\calP\in D^b_{qcoh}(\Dhz_{\calX\times\calY/\calX})$ and $\calQ\in D^b_{qcoh}(\Dhz_{\calY\times\calZ})$, denote
\[\calR=\pi_{\calX\calZ+/\calX}(\pi^\flat_{\calX\calY/\calX}\calP\hat{\otimes}^\mathbb{L}_{\calO_{\calX\times\calY\times\calZ}}\pi^{!(\calX)}_{\calY\calZ}\calQ)\in D^b_{qcoh}(\Dhz_{\calX\times\calZ/\calX}).\]
Then $\Phi^{\Dhz}_\calQ\circ\Phi^{\calY\leftarrow\calX}_\calP\simeq\Phi^{\calZ\leftarrow\calX}_\calR$.

\item If $\calX$ and $\calZ$ are proper, given $\calP\in D^b_{qcoh}(\Dhz_{\calX\times\calY/\calY})$ and $\calQ\in D^b_{qcoh}(\Dhz_{\calY\times\calZ/\calY})$, denote
\[\calR=\pi^{(\calY)}_{\calX\calZ+}(\pi^\flat_{\calX\calY/\calY}\calP\hat{\otimes}^\mathbb{L}_{\calO_{\calX\times\calY\times\calZ}}\pi^\flat_{\calY\calZ/\calY}\calQ)\in D^b_{qcoh}(\Dhz_{\calX\times\calZ}).\]
Then $\Phi^{\calZ\leftarrow\calY}_\calQ\circ\Phi^{\calX\rightarrow\calY}_\calP\simeq\Phi^{\Dhz}_\calR$.

\item If $\calX$, $\calY$ and $\calZ$ are proper, given $\calP\in D^b_{qcoh}(\Dhz_{\calX\times\calY})$ and $\calQ\in D^b_{qcoh}(\Dhz_{\calY\times\calZ})$, denote
\[\calR=\pi_{\calX\calZ+}(\pi^\flat_{\calX\calY}\calP\hat{\otimes}^\mathbb{L}_{\calO_{\calX\times\calY\times\calZ}}\pi^\flat_{\calY\calZ}\calQ)\in D^b_{qcoh}(\Dhz_{\calX\times\calZ}).\]
Then $\Phi^{\Dhz}_\calQ\circ\Phi^{\Dhz}_\calP\simeq\Phi^{\Dhz}_\calR$.

\end{itemize}

\end{prop}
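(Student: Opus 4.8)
The plan is to prove all eight isomorphisms by one and the same computation — the arithmetic analogue of the Mukai composition formula, carried out for $\calO$-modules in \cite[Section~5.1]{Huy} — performing it in detail only for the most symmetric case (the last one, all three formal schemes proper, $\calP$ and $\calQ$ carrying full $\Dhz$-structures) and then observing that each of the remaining seven cases is obtained from it by mechanically relabelling the functors according to which formal scheme each sheaf is a $\Dhz$-module relative to.

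Concretely, for the last case I would first unfold the composition into
\[\Phi^{\Dhz}_\calQ\circ\Phi^{\Dhz}_\calP(\calE^\cdot)\simeq q'_+\Big(\calQ\,\hat{\otimes}^{\mathbb{L}}_{\calO_{\calY\times\calZ}}\,p'^\flat q_+\big(\calP\,\hat{\otimes}^{\mathbb{L}}_{\calO_{\calX\times\calY}}\,p^\flat\calE^\cdot\big)\Big),\]
where $p,q$ and $p',q'$ are the two-factor projections of $\calX\times\calY$ and $\calY\times\calZ$, $p'',q''$ those of $\calX\times\calZ$, $p_\calX$ the projection $\calX\times\calY\times\calZ\to\calX$, and $\pi_{\calX\calY},\pi_{\calX\calZ},\pi_{\calY\calZ}$ the partial projections from the triple product. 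Then I would apply base change (Proposition \ref{chgmt_base}) to the cartesian square with rows $q$ and $p'$ and columns $\pi_{\calX\calY}$ and $\pi_{\calY\calZ}$, which turns $p'^\flat q_+$ into $\pi_{\calY\calZ+}\pi_{\calX\calY}^\flat$ (the shift relating $^\flat$ to $^!$ being $\dim\calZ$ on both sides). Using $p\circ\pi_{\calX\calY}=p_\calX$ and compatibility of $\flat$ with $\hat{\otimes}^{\mathbb{L}}$, then the projection formula (Proposition \ref{proj}) along $\pi_{\calY\calZ}$ to absorb $\calQ$, then the transitivity $q'_+\pi_{\calY\calZ+}\simeq q''_+\pi_{\calX\calZ+}$ (from $q'\circ\pi_{\calY\calZ}=q''\circ\pi_{\calX\calZ}$), I would arrive at
\[q''_+\,\pi_{\calX\calZ+}\big(\pi_{\calX\calY}^\flat\calP\,\hat{\otimes}^{\mathbb{L}}\,\pi_{\calY\calZ}^\flat\calQ\,\hat{\otimes}^{\mathbb{L}}\,p_\calX^\flat\calE^\cdot\big);\]
a last use of the projection formula, now along $\pi_{\calX\calZ}$ and with $p_\calX=p''\circ\pi_{\calX\calZ}$, pulls $p''^\flat\calE^\cdot$ out of the pushforward and leaves exactly $\Phi^{\Dhz}_\calR(\calE^\cdot)$ with $\calR=\pi_{\calX\calZ+}(\pi_{\calX\calY}^\flat\calP\,\hat{\otimes}^{\mathbb{L}}\,\pi_{\calY\calZ}^\flat\calQ)$. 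Properness of $\calX,\calY,\calZ$ is used only to know that the pushforwards preserve coherence, hence that all intermediate objects, and $\calR$ itself, lie in the stated bounded quasi-coherent categories.

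For the seven other cases the same five-step computation applies verbatim; what changes is the decoration. Each time a sheaf is a $\Dhz$-module relative to a factor, the pertinent $f^\flat$ becomes the relative inverse image $f^{!(\bullet)}$ of that factor and the pertinent $f_+$ becomes the relative direct image $f^{(\bullet)}_+$, and one invokes Propositions \ref{chgmt_base_rel} and \ref{proj_rel} instead of \ref{chgmt_base} and \ref{proj}; in the mixed cases, where one of $\calP,\calQ$ carries a full $\Dhz$-structure and the other only a relative one, the compatibility isomorphisms of Proposition \ref{compat} are precisely what let one commute a non-relative functor past a relative one. I would also stress that every base change used is along a projection — which is essential, since in positive characteristic the general base-change statement would require Kashiwara's theorem — but that this costs nothing here, all maps in sight being projections.

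The hard part is not any single computation but the bookkeeping: one has to check, for each of the eight cases, that every intermediate sheaf is a $\Dhz$-module over the base that the next functor in the chain expects, and that the flavour (relative versus non-relative, and relative to which factor) of each inverse or direct image occurring is one for which base change and the projection formula have been established in the previous section. Once the model case is written out carefully, confirming this for the remaining seven is routine, and verifying that the displayed kernel $\calR$ indeed lands in the category claimed in each item amounts to reading off which derivations survive the successive pushforwards.
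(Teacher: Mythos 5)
Your proposal is correct and follows essentially the same strategy as the paper: write out one model case as a chain of base-change and projection-formula isomorphisms (checking that every base change is along a projection), then observe that the other seven cases differ only in which relative or non-relative version of each lemma — together with Proposition \ref{compat} in the mixed cases — is invoked. The only cosmetic differences are that the paper works out the seventh case (the mixed one, which exercises the relative machinery) rather than the fully non-relative eighth, and runs the chain of isomorphisms from $\Phi_\calR$ to the composition rather than the other way around.
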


\begin{proof}

The proof is only a succession of base changes and projection formulas and the only difference between the proofs of the eight properties are which version of base change and projection formula is used. Hence, we will only prove the seventh one as an exemple.

Asume then $\calP\in D^b_{qcoh}(\Dhz_{\calX\times\calY/\calY})$, $\calQ\in D^b_{qcoh}(\Dhz_{\calY\times\calZ/\calY})$ and
\[\calR=\pi^{(\calY)}_{\calX\calZ+}(\pi^\flat_{\calX\calY/\calY}\calP\hat{\otimes}^\mathbb{L}_{\calO_{\calX\times\calY\times\calZ}}\pi^\flat_{\calY\calZ/\calY}\calQ)\in D^b_{qcoh}(\Dhz_{\calX\times\calZ}).\]

Using the following notations

\[\xymatrix{
	& & &\mathcal{X}\times\mathcal{Y}\times\mathcal{Z} \ar@/_40pt/_{\pi_\mathcal{X}}[llldd] \ar_{\pi_\mathcal{XY}}[dll] \ar^{\pi_\mathcal{XZ}}[dd] \ar^{\pi_\mathcal{YZ}}[rrd] \ar@/^40pt/^{\pi_\mathcal{Z}}[rrrdd] & & & \\
	& \mathcal{X}\times\mathcal{Y} \ar_q[dl] \ar^p[dr] & & & & \mathcal{Y}\times\mathcal{Z} \ar_u[ld] \ar^t[rd]& \\
	\mathcal{X} \ar@{=}[rrd]& &\mathcal{Y} \ar@/^20pt/@{=}[rr] &\mathcal{X}\times\mathcal{Z} \ar_s[dl] \ar^r[rd]&\mathcal{Y} & &\mathcal{Z} \ar@{=}[lld] \\
	& &\mathcal{X} & &\mathcal{Z}. & &
}\]

one has the following isomorphisms

\begin{align*}
\Phi_\calR^{\Dhz}(\calE^\cdot)&= r_+(\mathcal{R}\hat{\otimes} s^\flat\mathcal{E}^\cdot) & \\
&= r_+\left(\pi_{\mathcal{XZ}+}^{(\mathcal{Y})}(\pi^\flat_\mathcal{XY/Y}\mathcal{P}\hat{\otimes}\pi^\flat_\mathcal{YZ/Y}\mathcal{Q})\hat{\otimes} s^\flat\mathcal{E}^\cdot\right) &  \\
 &\simeq r_+\left( \pi_{\mathcal{XZ}+}^{(\mathcal{Y})}(\pi^\flat_\mathcal{XY/Y}\mathcal{P}\hat{\otimes}\pi^\flat_\mathcal{YZ/Y}\mathcal{Q}\hat{\otimes} \pi_\mathcal{XZ}^{!(\mathcal{Y})}s^\flat\mathcal{E}^\cdot)\right) &(\text{Projection ; Property }\ref{proj_rel}) \\
&\simeq r_+ \pi_{\mathcal{XZ}+}^{(\mathcal{Y})}(\pi^\flat_\mathcal{XY/Y}\mathcal{P}\hat{\otimes}\pi^\flat_\mathcal{YZ/Y}\mathcal{Q}\hat{\otimes} \pi_\mathcal{XY/Y}^\flat q^{!(\mathcal{Y})}\mathcal{E}^\cdot) & (\text{Property } \ref{compat})\\
&\simeq r_+ \pi_{\mathcal{XZ}+}^{(\mathcal{Y})}(\pi^\flat_\mathcal{YZ/Y}\mathcal{Q}\hat{\otimes}\pi^\flat_\mathcal{XY/Y}(\mathcal{P}\hat{\otimes} q^{!(\mathcal{Y})}\mathcal{E}^\cdot)) &\\
 &\simeq t_+^{(\mathcal{Y})} \pi_{\mathcal{YZ}+/\mathcal{Y}} (\pi^\flat_\mathcal{YZ/Y}\mathcal{Q}\hat{\otimes}\pi^\flat_\mathcal{XY/Y}(\mathcal{P}\hat{\otimes} q^{!(\mathcal{Y})}\mathcal{E}^\cdot))& (\text{Property } \ref{compat})\\
 &\simeq t_+^{(\mathcal{Y})}  (\mathcal{Q}\hat{\otimes}\pi_{\mathcal{YZ}+/\mathcal{Y}}\pi^\flat_\mathcal{XY/Y}(\mathcal{P}\hat{\otimes} q^{!(\mathcal{Y})}\mathcal{E}^\cdot))& (\text{Projection ; Property } \ref{proj})\\
 &\simeq t_+^{(\mathcal{Y})} (\mathcal{Q}\hat{\otimes} u^\flat_{/\mathcal{Y}} p_{+/\mathcal{Y}}(\mathcal{P}\hat{\otimes} q^{!(\mathcal{Y})}\mathcal{E}^\cdot))&(\text{Base change ; Property } \ref{chgmt_base}) \\
&=\Phi_\mathcal{Q}^{\mathcal{Z}\leftarrow\mathcal{Y}}\circ\Phi_\mathcal{P}^{\mathcal{X}\rightarrow\mathcal{Y}}(\mathcal{E}^\cdot).&
\end{align*}

\end{proof}

The most important thing to recall is that the composition of two Fourier-Mukai transforms of kernel $\calP$ and $\calQ$ is a Fourier-Mukai transform of kernel $\calR$ obtained by pulling back $\calP$ and $\calQ$ on $\calX\times\calY\times\calZ$, taking their tensor product, and finally pushing it forward on $\calX\times\calZ$, while making sure to use the good functors regarding the structures of $\Dhz$-modules of $\calP$ and $\calQ$.

\subsection{Fourier-Mukai transform of Poincaré kernel}

From now on, let $\calA$ be a formal abelian scheme over $\calS$ of dimension $d$ and denote by $A_i$ its reduction modulo $\pi^i$. Let also $m$ (resp. $m_i$), $\epsilon$ (resp. $\epsilon_i$) and $\langle-1\rangle$ (resp. $\langle-1\rangle_i$) be respectively the multiplication, neutral element and inverse morphisms of $\calA$ (resp. $A_i$).


As the goal of the Fourier-Mukai transform on $\Dhz$-modules is to take into account the structure of $\Dhz$-module of the sheaves over $\calA$, the usual definition of dual abelian scheme as the one representing the Picard functor can't be a good one. The idea is to build another functor, $Pic^\natural$, which will take into account the structure of $\Dhz$-module of the sheaves over $\calA$. Namely, $Pic^\natural(\calA/\calS)$ will be the group of rigid extensions of $\calA$.

This construction and the one of the Poincar\'e sheaf are very similar to what is done in the case of non-arithmetical $\calD$-modules in \cite{Lau}, as it is a summary of the results already proved in \cite{Ma-Me} (chapter I).

\begin{defi}

Let $p_1,p_2:\calA\times\calA\rightarrow\calA$ be the canonical projections. $Pic^\natural(\calA/\calS)$ is defined as the group of sheaves of $\Dhz_{\calA/\calS}$-modules $\calO_\calA$-invertible satisfying the theorem of the square, i.e.
\[m^\flat\calL\simeq p_1^\flat\calL\otimes p_2^\flat\calL,\]
where $m^\flat$ is the inverse image for $\Dhz_{\calA/\calS}$-modules without the shift.

\end{defi}

\begin{prop}

The functor $Pic^\natural(\calA\times\bullet/\bullet):\{$Noetherian $\calS$-schemes $\}\rightarrow\{$groups$\}$ is representable by a smooth group scheme over $\calS$ of dimension $2d$, denoted $\calA^\natural$.

\end{prop}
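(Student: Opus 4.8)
The plan is to reduce the statement to the algebraic situation over the artinian bases $S_i$, where the representability is the theorem of Mazur and Messing, and then to pass to the $\pi$-adic limit.

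First I would recall that, since a $\Dz$-module structure on an $\calO$-module is the same thing as an integrable connection, for each $i$ the group $Pic^\natural(A_i\times T_i/T_i)$ is exactly the group of rigid extensions of $A_i\times T_i/T_i$; by \cite{Ma-Me} (Chapter I) the functor $Pic^\natural(A_i\times\bullet/\bullet)$ on noetherian $S_i$-schemes is representable by the universal vectorial extension $A_i^\natural$ of $A_i$, which is a smooth commutative group scheme over $S_i$ of relative dimension $2d$. The second ingredient is that the formation of $A_i^\natural$ commutes with base change; applied to the closed immersions $S_i\hookrightarrow S_{i+1}$ this yields canonical isomorphisms $A_{i+1}^\natural\times_{S_{i+1}}S_i\simeq A_i^\natural$ compatible with the group laws. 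Hence $(A_i^\natural)_i$ is an inductive system of the kind considered in Section~1, and $\calA^\natural:=\varinjlim_i A_i^\natural$ is a smooth formal group scheme over $\calS$ of relative dimension $2d$.

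The next step is to check that $Pic^\natural$ is compatible with $\pi$-adic completion: for a noetherian $\calS$-scheme $\calT$ with reductions $T_i$, I claim there is a natural group isomorphism $Pic^\natural(\calA\times\calT/\calT)\simeq\varprojlim_i Pic^\natural(A_i\times T_i/T_i)$. Indeed, an $\calO_{\calA\times\calT}$-invertible $\Dhz_{\calA\times\calT/\calT}$-module is the same datum as an invertible $\calO_{\calA\times\calT}$-module equipped with an integrable connection relative to $\calT$ (the $\Dhz$-action being recovered from the connection by $\pi$-adic continuity, the module being coherent, hence complete), and such a module is equivalent to a compatible system of invertible sheaves with integrable connection on the $A_i\times T_i$; moreover the theorem of the square can be tested modulo each $\pi^i$, since $m^\flat$, $p_1^\flat$ and $p_2^\flat$ commute with the reductions. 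This identification is clearly functorial in $\calT$ and compatible with the group structures.

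Combining the two points then gives, for every noetherian $\calS$-scheme $\calT$,
\[Pic^\natural(\calA\times\calT/\calT)\simeq\varprojlim_i Pic^\natural(A_i\times T_i/T_i)\simeq\varprojlim_i\mathrm{Hom}_{S_i}(T_i,A_i^\natural)\simeq\mathrm{Hom}_\calS(\calT,\calA^\natural),\]
the last isomorphism being the universal property of the inductive limit $\calA^\natural=\varinjlim_i A_i^\natural$. By Yoneda, $\calA^\natural$ represents $Pic^\natural(\calA\times\bullet/\bullet)$, and since this functor takes values in groups, $\calA^\natural$ is a group object, hence a smooth group scheme over $\calS$ of dimension $2d$. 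I expect the main obstacle to be the middle step, namely making precise that an $\calO$-invertible $\Dhz_{\calA\times\calT/\calT}$-module satisfying the theorem of the square is nothing more and nothing less than a compatible system of such data modulo the powers of $\pi$; once this is granted, the statement is a formal consequence of the Mazur--Messing representability at finite level together with the base-change compatibility of the universal vectorial extension.
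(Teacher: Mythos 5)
Your proposal is correct and follows essentially the same route as the paper, which simply invokes the Mazur--Messing representability of the rigidified Picard functor (\cite{Ma-Me}, Chapter I) at each finite level and uses the compatibility of all the definitions with reduction modulo $\pi^i$ to pass to the formal limit $\calA^\natural\simeq\varinjlim A_i^\natural$. Your explicit verification that an $\calO$-invertible $\Dhz_{\calA\times\calT/\calT}$-module satisfying the theorem of the square is equivalent to a compatible system of such data modulo the powers of $\pi$ is exactly the point the paper leaves implicit.
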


For the proof, see \cite{Ma-Me}, chapter I. It is important to note that the proof uses the fact that we are looking at sheaves with integrable connection, so that one has to consider $\Dhz$-modules and not $\calD^{(m)}$-modules with $m>0$. This is the main reason why the following results may be harder to prove, if even true, in a more general case.

\begin{defi}

The element of $Pic^\natural(\calA\times\calA^\natural/\calA^\natural)$ associated to the identity $id_{\calA^\natural}\in Hom(\calA^\natural,\calA^\natural)$ is called the Poincar\'e sheaf and denoted by $\calP$.

\end{defi}

\begin{rem}
As it is an element of $Pic^\natural(\calA\times\calA^\natural/\calA^\natural)$, $\calP$ has a structure of $\Dhz_{\calA\times\calA^\natural/\calA^\natural}$-module, hence a structure of relative $\Dhz$-module. That fact implies that $\calA\times\calA^\natural$ and $\calA^\natural$ have to be considered as formal schemes over $\calA^\natural$, so that the Fourier-Mukai transform applied to a $\Dhz_\calA$-module can only give rise to a $\calO_{\calA^\natural}$-module.
\end{rem}

The previous definitions can also be written on the reductions modulo $\pi^i$, leading to $\Dz$-dual group schemes $A_i^\natural$ and Poincar\'e sheaves $\calP_i$. As these definitions are compatible with reductions modulo $\pi^i$, the following result is immediate.

\begin{prop}

There are canonical isomorphisms $\calA^\natural\simeq\varinjlim A_i^\natural$ and $\calP\simeq\varprojlim\calP_i$.

\end{prop}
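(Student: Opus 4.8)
The statement to prove is that there are canonical isomorphisms $\calA^\natural\simeq\varinjlim A_i^\natural$ and $\calP\simeq\varprojlim\calP_i$. The strategy is entirely formal: both sides are defined by universal properties (representability of $Pic^\natural$-functors), and the construction of $Pic^\natural$ commutes with the reduction maps, so the isomorphisms will follow from uniqueness of representing objects. I would organize the argument around a compatibility-of-functors statement followed by a Yoneda-type identification.

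\emph{First step.} I would recall that for each $i$, $A_i^\natural$ represents the functor $Pic^\natural(A_i\times\bullet/\bullet)$ on noetherian $S_i$-schemes, and that the transition maps $A_i\hookrightarrow A_{i+1}$ (as closed subschemes cut out by $\pi^i$) induce, by base change / restriction of the relevant invertible $\Dz$-modules with connexion satisfying the theorem of the square, morphisms of functors $Pic^\natural(A_{i+1}\times\bullet/\bullet)\to Pic^\natural(A_i\times\bullet/\bullet)$ (restricting a test $S_{i+1}$-scheme to its reduction mod $\pi^i$). By representability this yields transition morphisms $A_i^\natural\to A_{i+1}^\natural$ compatible with the reductions, hence an inductive system whose colimit is a formal scheme $\varinjlim A_i^\natural$ over $\calS$. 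I would then check that $\Dhz$-modules $\calO_\calA$-invertible satisfying the theorem of the square are, by the very definition of $\Dhz_{\calA/\calS}$ as a $\pi$-adic completion and of quasi-coherence ($\calL\simeq R\varprojlim\calL_i$), the same data as a compatible system of $\calO_{A_i}$-invertible $\Dz_{A_i}$-modules $\calL_i$ each satisfying the theorem of the square over $A_i$; the theorem of the square for $\calL$ is equivalent to it holding for every $\calL_i$ because $m^\flat$, $p_1^\flat$, $p_2^\flat$ all commute with reduction mod $\pi^i$ and an isomorphism of $\pi$-adically complete modules is detected modulo $\pi^i$. This identifies the functor $Pic^\natural(\calA\times\bullet/\bullet)$ (on noetherian $\calS$-schemes, i.e. compatible systems of noetherian $S_i$-schemes) with $\varprojlim_i Pic^\natural(A_i\times\bullet/\bullet)$.

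\emph{Second step.} With the functorial identification in hand, I would conclude: $\calA^\natural$ represents $Pic^\natural(\calA\times\bullet/\bullet)\simeq\varprojlim_i Pic^\natural(A_i\times\bullet/\bullet)$, while $\varinjlim_i A_i^\natural$ represents the same inverse limit of functors (a point of $\varinjlim A_i^\natural$ with values in $T=\varinjlim T_i$ is a compatible system of points $T_i\to A_i^\natural$, i.e. a compatible system of elements of $Pic^\natural(A_i\times T_i/T_i)$). By the uniqueness of a representing object, there is a canonical isomorphism $\calA^\natural\simeq\varinjlim A_i^\natural$. For the Poincar\'e sheaf: $\calP$ is by definition the universal object corresponding to $id_{\calA^\natural}$, and under the identification $Hom(\calA^\natural,\calA^\natural)\simeq\varprojlim Hom(A_i^\natural,A_i^\natural)$ the identity corresponds to the compatible system $(id_{A_i^\natural})_i$, whose associated universal objects are precisely the $\calP_i$. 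Since $\calP$ is quasi-coherent it equals $R\varprojlim\calP_i$; as each $\calP_i$ is an invertible sheaf concentrated in degree $0$ and the transition maps $\calP_{i+1}\to\calP_i$ are surjective (they are reductions mod $\pi^i$ of an invertible module), the derived limit $R\varprojlim$ agrees with the naive $\varprojlim$, giving $\calP\simeq\varprojlim\calP_i$.

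\emph{Main obstacle.} The genuinely delicate point is the identification of the functor $Pic^\natural(\calA\times\bullet/\bullet)$ with the inverse limit $\varprojlim_i Pic^\natural(A_i\times\bullet/\bullet)$ at the level of \emph{families}, i.e. over an arbitrary noetherian $\calS$-scheme $T$ rather than just over $\calS$ itself: one must know that a relative invertible $\Dhz$-module with connexion over $\calA\times T$ satisfying the theorem of the square is the same as a compatible system of such objects over the $A_i\times T_i$, which is exactly the content of Berthelot's characterization of quasi-coherent $\Dhz$-modules (the theorem attributed to Berthelot in the excerpt) applied in families, together with the fact that $\calO$-invertibility is a mod-$\pi$ condition. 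Granting that (and the cited results of \cite{Ma-Me}, chapter I, which handle the representability and hence contain the needed descent-type statements), the rest is a purely categorical manipulation with universal properties and the vanishing of $R^1\varprojlim$ for a system with surjective transition maps.
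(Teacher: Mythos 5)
Your argument is correct and matches the paper's approach: the paper simply observes that the definitions of $A_i^\natural$ and $\calP_i$ are compatible with the reductions modulo $\pi^i$ and declares the result immediate, and your proposal is the natural spelling-out of that compatibility (identification of $Pic^\natural(\calA\times\bullet/\bullet)$ with the inverse limit of the $Pic^\natural(A_i\times\bullet/\bullet)$, uniqueness of representing objects, and the correspondence of universal elements, with $R\varprojlim=\varprojlim$ since $\calP$ is invertible). No discrepancy to report.
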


One can use the representability of $Pic^\natural(\calA\times\bullet/\bullet)$ to find some useful results, such as the following.

\begin{prop}\label{repres}

Given $\calL\in Pic^\natural(\calA\times\calT/\calT)$, there exists only one morphism $f:\calT\rightarrow\calA^\natural$ such that
\[\calL\simeq(id_\calA\times f)^{!(\calT,\calA^\natural)}\calP.\]

In particular, the points of $\calA^\natural$ are in bijection with the $\Dhz_{\calA/\calS}$-modules $\calO_\calA$-invertible verifying the theorem of the square.

Moreover, $(id_\calA\times\epsilon^\natural)^{!(\calA^\natural)}\calP\simeq\calO_\calA$.

\end{prop}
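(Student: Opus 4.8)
\emph{Proof proposal.} The plan is to deduce every assertion from the representability of $Pic^\natural(\calA\times\bullet/\bullet)$ by $\calA^\natural$, once the transition maps of this functor have been identified with the relative inverse image functors. First I would record that representability means: for every noetherian $\calS$-scheme $\calT$ there is a bijection $\theta_\calT:Hom_\calS(\calT,\calA^\natural)\xrightarrow{\sim}Pic^\natural(\calA\times\calT/\calT)$, natural in $\calT$, and that by definition $\calP=\theta_{\calA^\natural}(id_{\calA^\natural})$. The only point requiring genuine care is to check that, for a morphism $f:\calT\to\calT'$, the transition map $Pic^\natural(\calA\times\calT'/\calT')\to Pic^\natural(\calA\times\calT/\calT)$ is exactly $(id_\calA\times f)^{!(\calT,\calT')}$, and in particular that this functor does land in $Pic^\natural$. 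This uses the properties of the relative inverse image established above: on underlying $\calO$-modules $(id_\calA\times f)^{!(\calT,\calT')}$ is $L(id_\calA\times f)^{*}=(id_\calA\times f)^{*}$ (an invertible sheaf being flat), so it preserves $\calO$-invertibility; it commutes with $\otimes_{\calO}$; and, combining the transitivity formulas with the relative base change (Property \ref{chgmt_base_rel}) and the compatibility between relative and non-relative functors (Property \ref{compat}) applied to the multiplication $m$ and the projections $p_1,p_2$ of $\calA$ (all defined over $\calS$), it carries the isomorphism $m^\flat\calL\simeq p_1^\flat\calL\otimes p_2^\flat\calL$ to the corresponding one for the pulled-back sheaf; finally it respects the relative $\Dhz$-module structure by construction. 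Hence $(id_\calA\times f)^{!(\calT,\calT')}$ is a well-defined natural transformation, and by uniqueness of the functor structure it coincides with the transition map of $Pic^\natural(\calA\times\bullet/\bullet)$.

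Granting this, Yoneda's lemma gives $\theta_\calT(f)=(id_\calA\times f)^{!(\calT,\calA^\natural)}\calP$ for every $f:\calT\to\calA^\natural$: indeed $\theta_\calT(f)$ is the image of $\calP=\theta_{\calA^\natural}(id_{\calA^\natural})$ under the transition map attached to $f$, which we have just identified. Since $\theta_\calT$ is a bijection, for any $\calL\in Pic^\natural(\calA\times\calT/\calT)$ there is exactly one $f=\theta_\calT^{-1}(\calL)$ with $\calL\simeq(id_\calA\times f)^{!(\calT,\calA^\natural)}\calP$, which is the main statement. Specializing to $\calT=\calS$, the set $Pic^\natural(\calA\times\calS/\calS)=Pic^\natural(\calA/\calS)$ of $\Dhz_{\calA/\calS}$-modules that are $\calO_\calA$-invertible and satisfy the theorem of the square is in bijection with $Hom_\calS(\calS,\calA^\natural)$, i.e. with the points of $\calA^\natural$; this is the ``in particular''.

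For the last formula, I would use that $Pic^\natural(\calA\times\bullet/\bullet)$ is a functor to abelian groups and that $\calA^\natural$ represents it \emph{as a group scheme}, so each $\theta_\calT$ is an isomorphism of groups. Consequently the neutral element $\epsilon^\natural\in\calA^\natural(\calS)$ is sent by $\theta_\calS$ to the neutral element of $Pic^\natural(\calA/\calS)$, namely the class of $\calO_\calA$ equipped with its canonical (trivial) integrable connection; combining with the Yoneda identification $\theta_\calS(\epsilon^\natural)=(id_\calA\times\epsilon^\natural)^{!(\calS,\calA^\natural)}\calP$ yields $(id_\calA\times\epsilon^\natural)^{!(\calA^\natural)}\calP\simeq\calO_\calA$ as $\Dhz_{\calA/\calS}$-modules. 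I expect no serious obstacle here: the whole argument is formal once representability is granted, and the single delicate verification is that the relative inverse image preserves the theorem of the square, for which one leans precisely on Properties \ref{compat} and \ref{chgmt_base_rel}.
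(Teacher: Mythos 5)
Your argument is correct and is precisely the one the paper intends: the proposition is stated without proof as an immediate consequence of the representability of $Pic^\natural(\calA\times\bullet/\bullet)$ by $\calA^\natural$, and your write-up simply makes explicit the Yoneda identification of the transition maps with the relative inverse images and the group-theoretic origin of $(id_\calA\times\epsilon^\natural)^{!(\calA^\natural)}\calP\simeq\calO_\calA$. The one verification you rightly single out --- that $(id_\calA\times f)^{!(\calT,\calT')}$ preserves the theorem of the square --- is exactly the point the paper leaves implicit, and your treatment of it is sound.
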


There is also a kind of compatibility result with the dual abelian formal scheme $\calA^\vee$.

\begin{prop}

Define $i : H^0(\calA,\Omega^1_{\calA/\calS})\rightarrow Pic^\natural(\calA/\calS)$ and $p:Pic^\natural(\calA/\calS)\rightarrow Pic^0(\calA/\calS)$ by $i(\omega)=\mathcal{O}_\calA$ with the integrable connection $d+\omega$ and $p(\mathcal{L})=\mathcal{L}$ seen as a $\mathcal{O}_\calA$-module.
Then there is an exact sequence of abelian schemes:
\[E(\calA/\calS) = (0\rightarrow H^0(\calA,\Omega^1_{\calA/\calS})\rightarrow Pic^\natural(\calA/\calS)\rightarrow Pic^0(\calA/\calS)).\]
Moreover, if $\calS$ is affine, the morphism $p$ is surjective.

One then has a functor $\calT\mapsto E(\calA\times \calT/\calT):\{$Locally noetherian $\calS$-schemes$\}\rightarrow\{$Exact sequences of abelian schemes$\}$. This functor is representable by an exact sequence of abelian schemes over $\calS$:

\[0\rightarrow\mathbb{V}(\epsilon^*\mathcal{T}_{\calA/\calS})\rightarrow \calA^\natural\rightarrow \calA^\vee\rightarrow0,\]
whith $\mathbb{V}(\mathcal{E})=Spec(Sym(\mathcal{E}))$.
Denote by $\theta$ the morphism from $\calA^\natural$ to $\calA^\vee$.

\end{prop}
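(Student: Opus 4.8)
The plan is to reduce everything to the corresponding statements over the reductions $A_i/S_i$, which are exactly the results of \cite{Ma-Me} (chapter I), and then to pass to the inductive limit $\calA=\varinjlim A_i$ using the already established identification $\calA^\natural\simeq\varinjlim A_i^\natural$. I would start by checking the exactness of $E(\calA/\calS)$ directly, since this is elementary and valid over any noetherian $\calS$-scheme (so that $E(\calA\times\bullet/\bullet)$ is indeed a functor to exact sequences of abelian groups). Injectivity of $i$: an isomorphism $(\calO_\calA,d+\omega)\simeq(\calO_\calA,d)$ of $\Dhz_{\calA/\calS}$-modules is multiplication by a unit $g\in\Gamma(\calA,\calO_\calA^\times)$ with $\omega=g^{-1}dg$, and since $\calA$ is proper over $\calS$ with geometrically connected fibres $g$ is pulled back from $\calS$, hence $\omega=0$. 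The composite $p\circ i$ is zero by construction. Conversely, if $p(\calL)=0$ then $\calL\simeq\calO_\calA$ as an $\calO_\calA$-module and its connection has the form $d+\omega$ with $\omega\in H^0(\calA,\Omega^1_{\calA/\calS})$; integrability gives $d\omega=0$ and the theorem of the square, read on the underlying $\calO$-modules, forces $m^*\omega=p_1^*\omega+p_2^*\omega$, so $\omega$ is an invariant differential and $\calL=i(\omega)$.

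For the surjectivity of $p$ over an affine base and for representability I would invoke \cite{Ma-Me} level by level. Over the affine $S_i$, every $\calL_0\in Pic^0(A_i\times T/T)$ with $T$ affine carries an integrable connection satisfying the theorem of the square, so $p_i$ is surjective; and the functor $T\mapsto E(A_i\times T/T)$ is representable by an exact sequence of smooth group schemes
\[0\rightarrow\mathbb{V}(\epsilon_i^*\calT_{A_i/S_i})\rightarrow A_i^\natural\rightarrow A_i^\vee\rightarrow0,\]
in which the left-hand term represents $T\mapsto H^0(A_i\times T,\Omega^1_{A_i\times T/T})$ via the duality $\mathbb{V}(\calE)(T)=\mathrm{Hom}(\calE,\calO_T)$, the middle term is the group scheme of Property \ref{repres}, and the right-hand term is the dual abelian scheme.

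Finally I would assemble the formal picture. Since $\epsilon^*\calT_{\calA/\calS}=\varprojlim\epsilon_i^*\calT_{A_i/S_i}$ is locally free of rank $d$ over $\calO_\calS$, one has $\mathbb{V}(\epsilon^*\calT_{\calA/\calS})=\varinjlim\mathbb{V}(\epsilon_i^*\calT_{A_i/S_i})$; combined with $\calA^\natural=\varinjlim A_i^\natural$ and $\calA^\vee=\varinjlim A_i^\vee$, the desired sequence is the filtered colimit of the sequences above, and as the transition maps all come from reduction modulo $\pi^j$ and filtered colimits of fppf abelian sheaves are exact, the colimit stays exact. This produces $0\rightarrow\mathbb{V}(\epsilon^*\calT_{\calA/\calS})\rightarrow\calA^\natural\rightarrow\calA^\vee\rightarrow0$ together with the morphism $\theta$. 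I expect the genuinely non-formal point to be the surjectivity of $p$ over affine bases: this is where the de Rham/crystalline nature of the universal extension enters in \cite{Ma-Me}, and the work is to confirm that its proof --- which does not use the characteristic-zero hypothesis --- still goes through over the Artinian bases $S_i$; everything else is bookkeeping with inverse and direct limits.
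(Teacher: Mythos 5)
Your proposal follows the same route as the paper, whose entire proof is to reduce modulo $\pi^i$, invoke the results of sections 2.6 and 3.2.3 of chapter I of Mazur--Messing, and pass to the limit. You simply make explicit the exactness check, the level-by-level citation, and the limit bookkeeping that the paper leaves implicit, so the two arguments coincide.
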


\begin{proof}

This is proved on the reductions modulo $\pi^i$, using results of sections 2.6 and 3.2.3 of chapter I of \cite{Ma-Me}. Then simply take the inverse limit.

\end{proof}

\begin{prop}\label{corresp_P}
There is an isomorphism of $\mathcal{O}_{\calA\times \calA^\natural}$-modules
\[\mathcal{P}\simeq(id_\calA\times \theta)^*\mathcal{P}^\vee,\]
where $\mathcal{P}^\vee$ is the Poincar\'e sheaf on $\calA\times \calA^\vee$.

\end{prop}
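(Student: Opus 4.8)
The plan is to identify both sides with the invertible sheaf on $\calA\times\calA^\natural$ that classifies the morphism $\theta$ for the Picard functor. First I would recall the universal property of the classical Poincar\'e sheaf: $\calP^\vee$ is the universal element of $Pic^0(\calA\times\bullet/\bullet)$, so that for any locally noetherian $\calS$-scheme $\calT$ and any morphism $g:\calT\to\calA^\vee$ the invertible sheaf $(id_\calA\times g)^*\calP^\vee$ is precisely the element of $Pic^0(\calA\times\calT/\calT)$ classified by $g$. Applying this with $\calT=\calA^\natural$ and $g=\theta$, the right-hand side $(id_\calA\times\theta)^*\calP^\vee$ is, by definition, the element of $Pic^0(\calA\times\calA^\natural/\calA^\natural)$ classified by $\theta$.

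Next I would use the construction of $\theta$. By the previous proposition, $\theta:\calA^\natural\to\calA^\vee$ is the morphism obtained, via Yoneda, from the natural transformation $p:Pic^\natural(\calA\times\bullet/\bullet)\to Pic^0(\calA\times\bullet/\bullet)$ of representable functors (the third arrow in the exact sequence $E(\calA\times\bullet/\bullet)$). By the very definition of the morphism on representing objects induced by a natural transformation, this means that $p$ applied to the universal element $\calP\in Pic^\natural(\calA\times\calA^\natural/\calA^\natural)$ — that is, $\calP$ with its $\Dhz_{\calA\times\calA^\natural/\calA^\natural}$-module structure forgotten, keeping only the underlying $\calO_{\calA\times\calA^\natural}$-module — yields precisely the element of $Pic^0(\calA\times\calA^\natural/\calA^\natural)$ classified by $\theta$. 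Comparing with the previous paragraph gives the isomorphism of $\calO_{\calA\times\calA^\natural}$-modules
\[p(\calP)\simeq(id_\calA\times\theta)^*\calP^\vee,\]
which is the statement, since the left-hand side is $\calP$ viewed as an $\calO$-module. Here one uses that $\calP$ itself is the element of $Pic^\natural(\calA\times\calA^\natural/\calA^\natural)$ classified by $id_{\calA^\natural}$ (Property \ref{repres}), and that the relative inverse image $(id_\calA\times\theta)^{!(\calA^\natural,\calA^\vee)}$ agrees on underlying $\calO$-modules with the plain pullback $(id_\calA\times\theta)^*$, as recorded in the proposition on relative inverse images.

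In keeping with the style of the earlier proofs, one may equivalently carry this out on the reductions modulo $\pi^i$: the statement $\calP_i\simeq(id_{A_i}\times\theta_i)^*\calP_i^\vee$ is part of the theory of \cite{Ma-Me} (chapter I). Since $\calP\simeq\varprojlim\calP_i$, $\calP^\vee\simeq\varprojlim\calP_i^\vee$, $\theta=\varprojlim\theta_i$, and the inverse image commutes with the inverse limit, passing to the limit over $i$ gives the result over $\calS$. The only genuinely delicate point is bookkeeping: one must check that $p$ is compatible with base change so that "the element classified by $\theta$" is unambiguous in the relative group $Pic^0(\calA\times\calA^\natural/\calA^\natural)$, and that the identification of $\theta$ with the morphism induced by $p$ is the one recorded in the preceding proposition; everything else is formal.
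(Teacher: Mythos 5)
Your argument is correct and is essentially the paper's own proof: both identify $\theta$ as the morphism induced by the forgetful natural transformation $Pic^\natural(\calA\times\bullet/\bullet)\to Pic^0(\calA\times\bullet/\bullet)$, evaluate the resulting commutative square at the universal element $id_{\calA^\natural}\mapsto\calP$, and use Yoneda to identify $\Phi^0_{\calA^\natural}(\theta)$ with $(id_\calA\times\theta)^*\calP^\vee$. The mod-$\pi^i$ reduction you mention at the end is an optional variant, not needed once the functorial argument is in place.
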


\begin{proof}

The representability of $\calT\mapsto(Pic^\natural(\calA\times\calT/\calT)\rightarrow Pic^0(\calA\times\calT/\calT)$ leads to the following commutative diagram
\[\xymatrix@=40pt{
		Pic^\natural(\calA\times \calA^\natural/\calA^\natural) \ar^{\theta_{\calA^\natural}}[r] & Pic^0(\calA\times \calA^\natural/\calA^\natural) \\
		Hom(\calA^\natural,\calA^\natural) \ar^{\Phi^\natural_{\calA^\natural}}[u] \ar_{\theta\circ\bullet}[r] & Hom(\calA^\natural,\calA^\vee) \ar_{\Phi^0_{\calA^\natural}}[u]
}\]
where $\theta_{\calA^\natural}$ is the forgetting of the structure of $\mathcal{D}_{\calA\times \calA^\natural/\calA^\natural}$-module and $\Phi^\natural_{\calA^\natural}$ and $\Phi^0_{\calA^\natural}$ are the representation isomorphisms.
Evaluating in the element $id_{\calA^\natural}\in Hom(\calA^\natural,\calA^\natural)$, one obtains
\[\xymatrix@=40pt{
		\mathcal{P} \ar@{|->}[r] & \mathcal{P} \\
		id_{\calA^\natural} \ar@{<->}[u] \ar@{|->}[r] & \theta \ar@{<->}[u]
}\]
where the left $\mathcal{P}$ is a $\mathcal{D}$-module and the right one is a $\mathcal{O}$-module.

On the other hand, as $\mathcal{P}^\vee=\Phi^0_{\calA^\vee}(id_{\calA^\vee})$, Yoneda's lemma states that 
\[\Phi^0_{\calA^\natural}(\theta)=\left(Pic^0(\calA\times\bullet/\bullet)(\theta)\right)(\mathcal{P}^\vee)=(id_\calA\times \theta)^*\mathcal{P}^\vee.\]
Hence, $\mathcal{P}=\Phi^0_{\calA^\natural}(\theta)=(id_\calA\times \theta)^*\mathcal{P}^\vee$, up to an isomorphism and as $\mathcal{O}$-modules.

\end{proof}

\begin{prop}

The morphism $\theta:\calA^\natural\rightarrow\calA^\vee$ is a torsor, hence affine.

Moreover, the sheaf $\theta_*\calO_{\calA^\natural}$ has an exhaustive filtration by locally free sub-$\calO_{\calA^\vee}$-modules of finite rank which makes it a filtrated $\calO_{\calA^\vee}$-algebra, with an isomorphism of graduated $\calO_{\calA^\vee}$-algebras
\[\varphi_\bullet:gr_\bullet(\theta_*\calO_{\calA^\natural}) \rightarrow \pi^{\vee*}Sym_\bullet^{\mathcal{O}_\calS}(\epsilon^*\mathcal{T}_\mathcal{A/S}).\]

\end{prop}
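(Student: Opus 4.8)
The plan is to reduce everything to the corresponding statement over each reduction $A_i^\natural\to A_i^\vee$, which is itself treated in \cite{Ma-Me} (chapter I, sections 2.6 and 3.2.3), and then pass to the inverse limit. First I would recall that $\theta:\calA^\natural\to\calA^\vee$ is obtained by taking the inverse limit of the maps $\theta_i:A_i^\natural\to A_i^\vee$, and that each $\theta_i$ fits into the exact sequence
\[0\rightarrow\mathbb{V}(\epsilon_i^*\calT_{A_i/S_i})\rightarrow A_i^\natural\rightarrow A_i^\vee\rightarrow0\]
constructed in the previous proposition. Since $\mathbb{V}(\epsilon_i^*\calT_{A_i/S_i})$ is a vector group over $A_i^\vee$ and the sequence is exact (hence $A_i^\natural$ is, locally on $A_i^\vee$, a trivial torsor under this vector group), $\theta_i$ is affine and $A_i^\natural\to A_i^\vee$ is a torsor under the vector bundle $\mathbb{V}(\pi_i^{\vee*}\epsilon_i^*\calT_{A_i/S_i})$. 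Being an affine morphism is stable under $\pi$-adic completion, so $\theta=\varprojlim\theta_i$ is affine as well; this is the easy half.

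For the filtration, the key point is the standard fact that the sheaf of functions on a torsor under a vector bundle $\mathbb{V}(\calE^\vee)$ carries a canonical exhaustive filtration by the "polynomials of degree $\leq n$", with associated graded the symmetric algebra $Sym_\bullet(\calE)$ pulled back to the base; locally, after trivializing the torsor, this is just the filtration of a polynomial ring by degree, and the gluing data (which is affine-linear rather than linear) does not affect the graded pieces, which is exactly why one gets an isomorphism of graded algebras and not merely a filtered one. Applying this to $\theta_i$ with $\calE=\epsilon_i^*\calT_{A_i/S_i}$ gives, for each $i$, an exhaustive filtration of $\theta_{i*}\calO_{A_i^\natural}$ by locally free $\calO_{A_i^\vee}$-modules of finite rank together with an isomorphism of graded $\calO_{A_i^\vee}$-algebras
\[\varphi_{i,\bullet}:gr_\bullet(\theta_{i*}\calO_{A_i^\natural})\rightarrow\pi_i^{\vee*}Sym_\bullet^{\calO_{S_i}}(\epsilon_i^*\calT_{A_i/S_i}).\]
This is precisely the content extracted from \cite{Ma-Me}.

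Then I would check compatibility of the $\varphi_{i,\bullet}$ with the transition maps, i.e. that reducing the filtration on $A_{i+1}^\natural$ modulo $\pi^i$ recovers the one on $A_i^\natural$ and similarly for the symmetric algebra — this holds because all the constructions (the exact sequence, the tangent sheaf, the symmetric algebra) are compatible with base change $S_i\hookrightarrow S_{i+1}$ and in particular with reduction mod $\pi^i$. Taking inverse limits, the filtration $\varprojlim_i$ of the finite-rank pieces gives an exhaustive filtration of $\theta_*\calO_{\calA^\natural}=\varprojlim_i\theta_{i*}\calO_{A_i^\natural}$ by locally free sub-$\calO_{\calA^\vee}$-modules of finite rank (finiteness and local freeness of each graded piece survive the limit since we can test them modulo $\pi$, and each $Sym_n$ is a coherent locally free module), the multiplicativity of the filtration is inherited levelwise, and the $\varphi_{i,\bullet}$ assemble into the desired isomorphism of graded $\calO_{\calA^\vee}$-algebras
\[\varphi_\bullet:gr_\bullet(\theta_*\calO_{\calA^\natural})\rightarrow\pi^{\vee*}Sym_\bullet^{\calO_\calS}(\epsilon^*\calT_{\calA/\calS}).\]
The only mildly delicate point I anticipate is verifying that forming the associated graded commutes with the inverse limit $R\varprojlim$ here — this is where one uses that each step of the filtration is a locally free (hence flat, $\pi$-torsion-free) module of finite rank, so the relevant $\varprojlim^1$ terms vanish and reductions mod $\pi^i$ behave well; everything else is formal bookkeeping transported from the non-formal case.
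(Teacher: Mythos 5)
Your proposal follows essentially the same route as the paper: the paper's proof consists precisely of invoking the classical result (it cites Laumon, part 2.3, which rests on the torsor structure coming from the exact sequence $0\rightarrow\mathbb{V}(\epsilon^*\mathcal{T}_{\calA/\calS})\rightarrow\calA^\natural\rightarrow\calA^\vee\rightarrow0$) on each reduction modulo $\pi^i$ and then passing to the inverse limit. You simply spell out the levelwise argument and the compatibility/limit bookkeeping that the paper leaves implicit, so the proposal is correct and matches the paper's approach.
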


\begin{proof}

It is proved in the same way as in \cite{Lau}, part 2.3, for the reductions modulo $\pi^i$ and one can then take the inverse limit of these results modulo $\pi^i$.

\end{proof}


The Fourier-Mukai transform on $\calA$ for $\Dhz$-modules is defined as the Fourier-Mukai transform with kernel the Poincar\'e (relative) sheaf.

\begin{defi}

Denote by $p:\calA\times\calA^\natural\rightarrow\calA$ and $p^\natural:\calA\times\calA^\natural\rightarrow\calA^\natural$ the canonical projections.

The Fourier-Mukai transform on $\calA$ for $\Dhz$-modules is defined as the functor $\calF:D^b_{qcoh}(\Dhz_{\calA/\calS})\rightarrow D^b_{qcoh}(\calO_{\calA^\natural})$ given by
\[\calF(\calE^\cdot)=\Phi_\calP^{\calA\rightarrow\calA^\natural}(\calE^\cdot)=p^\natural_{+/\calA^\natural}(\calP\overset{\mathbb{L}}{\otimes}_{\calA\times\calA^\natural}p^{!(\calA^\natural)}\calE^\cdot).\]

The $\calD$-dual Fourier-Mukai transform is then defined as the functor $\calF^\natural:D^b_{qcoh}(\calO_{\calA^\natural})\rightarrow D^b_{qcoh}(\Dhz_{\calA/\calS})$ given by
\[\calF^\natural(\calE^\cdot)=\Phi_\calP^{\calA\leftarrow\calA^\natural}(\calE^\cdot)=p^{(\calA^\natural)}_+(\calP\overset{\mathbb{L}}{\otimes}_{\calA\times\calA^\natural}p^{\natural\flat}_{\calA^\natural}\calE^\cdot).\]

\end{defi}

\subsection{Essential surjectivity of the Fourier-Mukai transform for $\Dhz$-modules}

As stated in the introduction, the goal of this Fourier-Mukai transform is to be involutive and hence to give an equivalence of categories between $D^b_{qcoh}(\Dhz_{\calA/\calS})$ and $D^b_{qcoh}(\calO_{\calA^\natural})$. However, the proof of the involutivity in the non-arithmetic case, which can be found in \cite{Lau} for instance, will not fully apply in the arithmetic case, mostly because of the lack of Kashiwara's theorem for $\Dhz$-modules.

Still, it is good to have an idea of the proof to know where we are going, so let us recall here the main steps. First, it is known that the composition of two Fourier-Mukai transform is a Fourier-Mukai transform, hence $\calF^\natural\circ\calF$ (or $\calF\circ\calF^\natural$) is a Fourier-Mukai transform of kernel $\calR$ from the category $D^b_{qcoh}(\Dhz_{A/S})$ (or $D^b_{qcoh}(\calO_{A^\natural})$) into itself. The goal is to prove that the Fourier-Mukai transform of kernel $\calR$ is an isomorphism, and hence understand the kernel itself. After some transformations using the base change theorem, the seesaw principle and some basic properties of the elements of $Pic^\natural(A\times A^\natural/A^\natural)$, $\calR$ is proved to be isomorphic to the pullback by the multiplication of $p_+^{(A^\natural)}\calP$ (or $p^\natural_{+/A^\natural}\calP$). A very difficult lemma states that this sheaf is isomorphic to the push-forward of $\calO_S$ by the neutral element, modulo a shift. Once this lemma proven, the end of the proof is straightforward using base change and projection formulas.


So, the key argument is the understanding of the push-forward of $\calP$ by the two projections. The main idea will be to adapt the proofs in \cite{Lau} to the arithmetic case. If one of the two push-forward is computable using some more arguments than in the non-arithmetic case, the other one is more complicated to compute, hence the following lemma.

\begin{lem}


	 $p_{+/\calA^\natural}^\natural\calP\simeq\epsilon^\natural_*\calO_\calS[-d]\in D^b_{qcoh}(\calO_{\calA^\natural})$.
	


\end{lem}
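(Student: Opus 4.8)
The plan is to reduce the computation to the reductions modulo $\pi^i$ and then adapt Laumon's computation from \cite{Lau}, part 2. Since all our functors are defined as $R\varprojlim$ of their classical counterparts, and since the expected isomorphism $p^\natural_{i+/A_i^\natural}\calP_i\simeq\epsilon^\natural_{i*}\calO_{S_i}[-d]$ is compatible with reductions modulo $\pi^j$ (the morphisms being built from base change and adjunction data), it suffices to establish the statement over each $S_i$ and then pass to the inverse limit. So from now on I work with a fixed $i$ and drop it from the notation, writing $A$, $A^\natural$, $\calP$ for $A_i$, $A_i^\natural$, $\calP_i$.

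First I would recall the key geometric input: the push-forward $p^\natural_{+/A^\natural}\calP$ can be analyzed fibrewise over $A^\natural$ using the base change theorem (Property \ref{chgmt_base}, in its relative form), which reduces the computation of the complex to computing, for each point $y\in A^\natural$, the cohomology $R\Gamma(A, \calP_y)$ where $\calP_y=\iota_y^*\calP$ is the invertible sheaf with connection on $A$ classified by $y$. By Property \ref{repres}, the fibre over the neutral element $\epsilon^\natural$ is $\calO_A$ with its trivial connection, whose de Rham cohomology — which is what the relative direct image over $A^\natural$ computes, since the derivations along $A$ act — is $H^*_{dR}(A/S)$. The crucial point, exactly as in \cite{Lau}, is the vanishing statement: for $y\neq\epsilon^\natural$, the sheaf $\calP_y$ is $\calO_A$-invertible with a \emph{non-trivial} integrable connection, and the de Rham cohomology of such a connection on an abelian scheme vanishes in all degrees. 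Over a field of characteristic zero this is classical; here one argues on each $A_i$, so the de Rham cohomology is that of a $\Dz_{A_i}$-module which is $\calO_{A_i}$-invertible but not in $Pic^0$ together with trivial connection, and the vanishing must be obtained from the structure of $\calP$ relative to the universal vectorial extension. Concretely I would use the filtration of $\theta_*\calO_{A^\natural}$ by locally free $\calO_{A^\vee}$-modules with graded pieces symmetric powers of $\epsilon^*\calT_{A/S}$ (the last displayed Property before the definition of $\calF$), together with Property \ref{corresp_P} relating $\calP$ to the classical Poincaré sheaf $\calP^\vee$ on $A\times A^\vee$, to reduce the de Rham computation to the known cohomology of $\calP^\vee$ twisted by these symmetric powers, i.e. to a Koszul-type computation coming from the vectorial extension direction.

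Concretely the steps are: (1) write $p^\natural_{+/A^\natural}\calP = Rp^\natural_*\big(\Dhz_{A^\natural\leftarrow A\times A^\natural}\otimes^{\mathbb L}_{\Dhz_{A\times A^\natural/A^\natural}}\calP\big)$, i.e. as the relative de Rham complex of $\calP$ along $A$; (2) push this forward through $\theta$ in the target direction, using that $\theta$ is an affine torsor and that $\theta_*\calO_{A^\natural}$ carries the exhaustive filtration with graded pieces $\pi^{\vee*}Sym^{\calO_S}_\bullet(\epsilon^*\calT_{A/S})$, so that $p^\natural_{+/A^\natural}\calP$ is computed (after pushing to $A^\vee$) as an iterated extension of the relative de Rham complexes of $\calP^\vee\otimes Sym^n(\ldots)$; (3) invoke the classical fact (valid on each $A_i$, by base change from the field case via the $V_i$ being Artinian) that $Rp^\vee_*\calP^\vee\simeq\epsilon^\vee_*\calO_S[-d]$ and that the higher symmetric power twists contribute a Koszul complex that is exact except at the origin, whose total cohomology at the origin is $\calO_S$ placed in degree $d$; (4) assemble these to get $p^\natural_{+/A^\natural}\calP\simeq\epsilon^\natural_*\calO_S[-d]$; (5) take $R\varprojlim$ over $i$.

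The main obstacle, as the paper itself flags, is precisely step (3) in the arithmetic setting: the classical proof of the vanishing of the de Rham cohomology of $\calP^\vee$ away from the origin, and of the symmetric-power twists, uses Kashiwara's theorem and the characteristic-zero structure theory of holonomic $\calD$-modules, which are unavailable for $\Dhz$-modules. The hard part will be to replace this by a direct computation: one must show on each $A_i$ that $R\Gamma_{dR}(A_i,\calP_i\otimes Sym^n)$ vanishes for the relevant points and assemble the torsor filtration correctly, controlling the $\pi$-adic behaviour so that the $R\varprojlim$ in step (5) does not introduce extra cohomology (i.e. checking the relevant Mittag-Leffler / derived-limit conditions, which follow from the quasi-coherence hypotheses and Berthelot's theorem recalled above). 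I expect that once the fibrewise de Rham vanishing is secured — most plausibly by transporting the computation through $\theta$ to $\calA^\vee$ and using the explicit symmetric-algebra structure of $\theta_*\calO_{\calA^\natural}$ together with the classical result on $\calP^\vee$ — the remaining assembly is formal.
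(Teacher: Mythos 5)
Your overall strategy --- analyse $p^\natural_{+/\calA^\natural}\calP$ fibrewise over $\calA^\natural$ via base change, isolate the de Rham vanishing for the fibres classified by points $\iota\neq\epsilon^\natural$ as the crux, and then globalize --- is the right skeleton, and it matches the paper's Steps 1--3. But there are two genuine gaps. First, you never prove the key vanishing $H^n_{dR}(\calL)=0$ for $\calL\in Pic^\natural(\calA/\calS)$, $\calL\not\simeq\calO_\calA$; you only propose to obtain it by transporting the computation through $\theta$ to $\calA^\vee$ and running a Koszul/symmetric-algebra argument against the classical coherent vanishing for $\calP^\vee$. That is essentially Laumon's characteristic-zero route, and its viability here is precisely the non-trivial issue: the filtration of $\theta_*\calO_{\calA^\natural}$ is infinite and the degeneration/convergence of the associated spectral sequence is not automatic in mixed characteristic. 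The paper avoids this entirely and proves the vanishing by a different mechanism: pass to the generic fibre $\calL_\Q$ and use Kedlaya's structure result to show $H^0_{dR}(\calL_\Q)=0$, deduce $H^0_{dR}(\calL)=0$ by a torsion argument, and then bootstrap to all degrees $n$ using the theorem of the square together with the K\"unneth formula. None of that appears in your plan, and your parenthetical claim that the needed facts are ``valid on each $A_i$ by base change from the field case'' is not justified, since the relevant residue fields have characteristic $p$.

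Second, your order of reduction is backwards in a way that matters. You propose to reduce modulo $\pi^i$ at the outset and prove the statement over each $S_i$; but the vanishing argument fundamentally lives at the formal level (it uses the generic fibre $\calL_\Q$, which does not exist over $V_i$), and there is no direct characteristic-$p$ proof of the fibrewise de Rham vanishing on a single $A_i$. The paper's actual order is: construct a \emph{global} comparison morphism $p^\natural_{+/\calA^\natural}\calP\rightarrow\epsilon^\natural_*\calO_\calS[-d]$ first (by adjunction from a truncation of $R\pi_*DR(\calA/\calS)[d]$, which requires computing $H^{2d}_{dR}(\calA/\calS)\simeq V$ --- a step you omit entirely, and without which fibrewise isomorphisms do not assemble into a global one); then check it is an isomorphism after pullback along formal points $Spf(W(k))\hookrightarrow\calA^\natural$ lifting the closed points of the $A_i^\natural$; and only then reduce modulo $\pi^i$ and invoke coherence plus Nakayama. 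So the reductions modulo $\pi^i$ enter at the end, to globalize a pointwise statement proved at the formal level, not at the beginning.
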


\begin{rem}

The expected second isomorphism should be $p_+^{(\calA^\natural)}\calP\simeq\epsilon_+\calO_{\calS}[-d]\in D^b_{qcoh}(\Dhz_{\calA/\calS})$, however it can't be proved using the same method as in \cite{Lau} (or, at least, not without a lot of work). The main reason is that the proof of this isomorphism in the non-arithmetic case uses the Kashiwara's equivalence theorem, which is known to be false for $\Dhz$-modules. However, as there exists a version of this result for $\Ddag_\Q$-modules (see e.g. \cite{Caro09}), one could expect an isomorphism of the form
\[\Ddag_{\calA/\calS,\Q}\otimes_{\Dhz_{\calA/\calS,\Q}}p_+^{(\calA^\natural)}\calP_\Q\simeq\epsilon_+\calO_{\calS,\Q}[-d]\in D^b_{qcoh}(\Ddag_{\calA/\calS,\Q}).\]
But even if this isomorphism were true it will not be of use for the proof of the involutivity of the Fourier-Mukai transform, hence we will not try to prove it.

\end{rem}

\begin{proof}


In order to prove the isomorphism, the first thing to do is to find a morphism of $\calO_{\calA^\natural}$-modules between $p^\natural_{+/\mathcal{A}^\natural}\mathcal{P}$ and $\epsilon_*^\natural\mathcal{O}_{\mathcal{S}}[-d]$.
Note that, as $\calP$ is invertible, hence coherent, and $p^\natural$ and $\epsilon^\natural$ are proper, the sheaves $p^\natural_{+/\mathcal{A}^\natural}\mathcal{P}$ and $\epsilon^\natural_* \mathcal{O}_\mathcal{S}$ are coherent.

The goal is to construct the morphism $p^\natural_{+/\mathcal{A}^\natural}\mathcal{P}\rightarrow\epsilon_*^\natural\mathcal{O}_{\mathcal{S}}[-d]$ by adjunction from a morphism $\epsilon^{\natural*}p^\natural_{+/\mathcal{A}^\natural}\mathcal{P}\rightarrow\mathcal{O}_{\mathcal{S}}[-d]$ (recall that the complex of $\mathcal{O}_\mathcal{S}$-modules $\epsilon^{\natural*}\mathcal{E}^\cdot$ is defined as $R\varprojlim \epsilon^{\natural*}_i\mathcal{E}_i^\cdot$).
Consider the following base chance diagram
\[\xymatrix@=40pt{
\mathcal{A} \ar@{^(->}^{id_\mathcal{A}\times\epsilon^\natural}[r] \ar_\pi[d] & \mathcal{A}\times \mathcal{A}^\natural \ar^{p^\natural}[d]\\
\mathcal{S} \ar@{^(->}^{\epsilon^\natural}[r] & \mathcal{A}^\natural.
}\]

Using the base change formula for $\calO$-modules in the previous diagram, one finds the following isomorphisms.

\[\epsilon^{\natural*} p^\natural_{+/\mathcal{A}^\natural}\mathcal{P}
= \epsilon^{\natural*} Rp^\natural_{*}(\Dhz_{\mathcal{A}^\natural\leftarrow\mathcal{A}\times\mathcal{A}^\natural/\mathcal{A}^\natural}\otimes_{\Dhz_{\mathcal{A}\times\mathcal{A}^\natural/\mathcal{A}^\natural}}\mathcal{P})
\simeq R\pi_* (id_\mathcal{A}\times\epsilon^\natural)^{*}(\Dhz_{\mathcal{A}^\natural\leftarrow\mathcal{A}\times\mathcal{A}^\natural/\mathcal{A}^\natural}\otimes_{\Dhz_{\mathcal{A}\times\mathcal{A}^\natural/\mathcal{A}^\natural}}\mathcal{P}).\]
Denote by $DR_{\mathcal{A}\times \mathcal{A}^\natural/\mathcal{A}^\natural}(\mathcal{P})$ the de Rham complex of $\mathcal{P}$ (between degrees $0$ and $d$):
\[DR_{\mathcal{A}\times \mathcal{A}^\natural/\mathcal{A}^\natural}(\mathcal{P}) : 0\rightarrow\mathcal{P}\rightarrow\Omega^1_{\mathcal{A}\times\mathcal{A}^\natural/\mathcal{A}^\natural}\otimes\mathcal{P}\rightarrow\ldots\rightarrow\Omega^{d}_{\mathcal{A}\times\mathcal{A}^\natural/\mathcal{A}^\natural}\otimes\mathcal{P}\rightarrow0\]

The transfer bimodule $\Dhz_{\mathcal{A}^\natural\leftarrow \mathcal{A}\times \mathcal{A}^\natural/\calA^\natural}$ is canonically isomorphic to its Spencer complex
\[0\rightarrow\Dhz_{\calA\times\calA^\natural/\calA^\natural}\rightarrow\Omega_{\calA\times\calA^\natural/\calA^\natural}\otimes \Dhz_{\calA\times\calA^\natural/\calA^\natural}\rightarrow\ldots\rightarrow\Omega_{\calA\times\calA^\natural/\calA^\natural}^{dim(\calA)}\otimes \Dhz_{\calA\times\calA^\natural/\calA^\natural}\rightarrow0,\]
hence there is a canonical isomorphism
\[\Dhz_{\mathcal{A}^\natural\leftarrow\mathcal{A}\times\mathcal{A}^\natural/\mathcal{A}^\natural}\otimes_{\Dhz_{\mathcal{A}\times\mathcal{A}^\natural/\mathcal{A}^\natural}}\mathcal{P}\simeq DR_{\mathcal{A}\times \mathcal{A}^\natural/\mathcal{A}^\natural}(\mathcal{P})[d],\]
where the second complex is then between degrees $-d$ and $0$.
There is then an isomorphism
\[\epsilon^{\natural*} p^\natural_{+/\mathcal{A}^\natural}\mathcal{P}\simeq R\pi_* (id_\mathcal{A}\times\epsilon^\natural)^{*}DR_{\mathcal{A}\times \mathcal{A}^\natural/\mathcal{A}^\natural}(\mathcal{P})[d].\]
As the elements of the de Rham complex of $\calP$ are flats, they are acyclic for $(id_\mathcal{A}\times\epsilon^\natural)^{*}$ and $(id_\mathcal{A}\times\epsilon^\natural)^{*}\Omega^k_{\mathcal{A}\times\mathcal{A}^\natural/\mathcal{A}^\natural}\simeq\Omega^k_{\mathcal{A/S}}$. Hence,
\[(id_\mathcal{A}\times\epsilon^\natural)^{*}DR_{\mathcal{A}\times \mathcal{A}^\natural/\mathcal{A}^\natural}(\mathcal{P})\simeq DR_{\mathcal{A/S}}((id_\mathcal{A}\times\epsilon^\natural)^{*}\mathcal{P}).\]

As previously settled (property \ref{repres}), the $\calO_\calA$-module $(id_\mathcal{A}\times\epsilon^\natural)^{*}\mathcal{P}$ is well known: in fact, as $\calD_{\calA/\calS}$-modules, $(id_\mathcal{A}\times\epsilon^\natural)^{!(\calA^\natural)}\mathcal{P}\simeq\calO_\calA$. Their underlying $\calO_\calA$-modules are then isomorphic too.

There is then a canonical isomorphism
\[\epsilon^{\natural*} p^\natural_{+/\mathcal{A}^\natural}\mathcal{P}\simeq R\pi_* DR_\mathcal{A/S}(\mathcal{O}_\mathcal{A})[d]\simeq R\pi_{*} DR(\mathcal{A/S})[d],\]
which is then a complex in degrees between $-d$ and $0$.

As the $2d$-th cohomology group of $R\pi_* DR(\mathcal{A/S})$ is $\mathcal{H}^{2d}_{dR}(\mathcal{A/S})$, it will be usefull to understand the cohomology of $\calA$.

\begin{lem}\label{cohom}
\[H^n_{dR}(\mathcal{A}/\mathcal{S})\simeq \varprojlim H^n_{dR}(A_i/S_i)\simeq \bigwedge^n Lie(\mathcal{A}^\natural).\]
\end{lem}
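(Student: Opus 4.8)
The plan is to prove the two isomorphisms separately. For the first one, $H^n_{dR}(\mathcal{A}/\mathcal{S})\simeq\varprojlim H^n_{dR}(A_i/S_i)$, the idea is that de Rham cohomology of $\mathcal{A}/\mathcal{S}$ is by definition the hypercohomology of the de Rham complex $DR(\mathcal{A}/\mathcal{S})$, which is computed $\pi$-adically as $R\varprojlim DR(A_i/S_i)$. First I would note that since $\mathcal{A}$ is a proper formal scheme, each $H^n_{dR}(A_i/S_i)$ is a finitely generated $V_i$-module, so the projective system $(H^n_{dR}(A_i/S_i))_i$ satisfies the Mittag-Leffler condition; the same holds for the systems of boundaries and cocycles. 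This kills the $\varprojlim^1$ terms in the spectral sequence (or exact sequence) relating $H^n(R\varprojlim)$ to $\varprojlim H^n$, giving the first isomorphism. Concretely, one invokes the comparison $H^n(R\varprojlim C_i^\cdot)\simeq\varprojlim H^n(C_i^\cdot)$ valid when the relevant systems are Mittag-Leffler.

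For the second isomorphism, $\varprojlim H^n_{dR}(A_i/S_i)\simeq\bigwedge^n Lie(\mathcal{A}^\natural)$, I would work level by level and then pass to the limit. The classical fact (due to Mazur-Messing, see \cite{Ma-Me}) is that for an abelian scheme $A_i/S_i$ in any characteristic, there is a canonical isomorphism $H^1_{dR}(A_i/S_i)\simeq Lie(A_i^\natural)$, where $A_i^\natural$ is the universal vectorial extension: indeed $H^1_{dR}(A_i/S_i)$ sits in an extension $0\to H^0(A_i,\Omega^1)\to H^1_{dR}(A_i/S_i)\to H^1(A_i,\mathcal{O})\to 0$, which is exactly the Lie-algebra version of the defining extension $0\to \mathbb{V}(\epsilon^*\mathcal{T})\to A_i^\natural\to A_i^\vee\to 0$ of the proposition above, since $Lie(A_i^\vee)\simeq H^1(A_i,\mathcal{O})$ and $Lie(\mathbb{V}(\epsilon^*\mathcal{T}_{A_i}))\simeq H^0(A_i,\Omega^1_{A_i})$. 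Then I would use that for an abelian scheme the de Rham cohomology is the exterior algebra on $H^1$, i.e. the cup product induces $\bigwedge^n H^1_{dR}(A_i/S_i)\xrightarrow{\sim} H^n_{dR}(A_i/S_i)$; this follows from the degeneration of the Hodge-de Rham spectral sequence for abelian schemes together with the analogous statement for the Hodge pieces $\bigwedge^n H^1 = \bigoplus_{p+q=n}\bigwedge^p H^0(\Omega^1)\otimes\bigwedge^q H^1(\mathcal{O})$, which holds since $H^\cdot(A_i,\mathcal{O})$ and $H^\cdot(A_i,\Omega^1)$ are exterior algebras on $H^1(\mathcal{O})$ and (for the full Hodge cohomology) generated in degree one. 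Taking $\varprojlim$ over $i$ and using that $\varprojlim$ commutes with finite exterior powers of these finite free modules (again Mittag-Leffler), one gets $\varprojlim H^n_{dR}(A_i/S_i)\simeq\bigwedge^n\varprojlim Lie(A_i^\natural)\simeq\bigwedge^n Lie(\mathcal{A}^\natural)$, the last step using $\calA^\natural\simeq\varinjlim A_i^\natural$ proved above.

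The main obstacle I expect is making the identification $H^1_{dR}(A_i/S_i)\simeq Lie(A_i^\natural)$ functorial and compatible with the transition maps in a way that survives the inverse limit, and in particular checking that the crystalline/de Rham comparison is the right one in positive residue characteristic — this is precisely where one must lean on the Mazur-Messing description of the universal vectorial extension rather than on any characteristic-zero argument. A secondary, more routine difficulty is verifying the degeneration of the Hodge-de Rham spectral sequence and the exterior-algebra structure integrally over $V_i$ (not just after inverting $p$); for abelian schemes this is classical, but one should cite it carefully. Everything else — the Mittag-Leffler bookkeeping and the passage to the limit — is formal.
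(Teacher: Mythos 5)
Your proof is correct and follows essentially the same route as the paper: identify $H^n_{dR}(\mathcal{A}/\mathcal{S})$ with $\varprojlim H^n_{dR}(A_i/S_i)$ by killing the $R^1\varprojlim$ term in the composed-functor spectral sequence, then invoke the Mazur--Messing isomorphism $H^1_{dR}(A_i/S_i)\simeq Lie(A_i^\natural)$ together with the exterior-algebra structure of the de Rham cohomology of an abelian scheme. The only (harmless) divergence is that the paper kills $R^1\varprojlim$ by citing Coleman's explicit description $H^n_{dR}(A_i/S_i)\simeq\epsilon^{\natural*}\Omega^n_{A_i^\natural/S_i}$, whereas you use the formal Mittag--Leffler argument for finite-length $V_i$-modules, which works just as well.
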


\begin{proof}

It is known that the functors $R\Gamma$ and $R\varprojlim$ commute, hence
\[R\Gamma R\varprojlim DR(A_i/S_i) \simeq R\varprojlim R\Gamma DR(A_i/S_i).\]

As the elements of $DR(A_i/S_i)$ are the reductions modulo $\pi^i$ of the elements of $DR(A_{i+1}/S_{i+1})$, $R\varprojlim DR(A_i/S_i)$ is the complex which elements of the projective limits of the elements of the $DR(A_i/S_i)$, i.e. $DR(\calA/\calS)$. Hence, the cohomology groups of the left hand side are the $H^n_{dR}(\calA/\calS)$. It is then sufficient to compute the cohomology groups of the right hand side.

A priori, the spectral sequence of the composed functors gives two terms for the $n$-th cohomology group of the right hand side: $\varprojlim H^n_{dR}(A_i/S_i)$ and $R^1\varprojlim H^{n-1}_{dR}(A_i/S_i)$, the functor $R\varprojlim$ being of cohomological dimension 1.
However, it is known that $H^n_{dR}(A_i/S_i)\simeq\epsilon^{\natural*}\Omega^n_{A_i^\natural/S_i}$ (\cite{Coleman}, theorem 2.2), hence the $R^1\varprojlim$ is trivial and that conclude the proof of the first isomorphism.

The second isomorphism is a direct consequence of the isomorphism $H^1(A_i/S_i)\simeq Lie(A_i^\natural)$ (see \cite{Ma-Me}, parts I.4.1 and I.4.2).

\end{proof}

As $H_{dR}^1(\mathcal{A/S})\simeq Lie(\mathcal{A}^\natural)$ is a free $\mathcal{V}$-module of rank $2d$, it is clear that $H^{2d}_{dR}(\mathcal{A/S})\simeq \mathcal{V}$.


Hence, denoting $F^\bullet=R\pi_*DR(\mathcal{A}/\mathcal{S})[d]$, one finds a morphism of complexes
\[\xymatrix@=30pt{
0 \ar[r] & F^{-d} \ar[r]\ar[d] & \ldots \ar[r] & F^{d-1} \ar^{d_{d-1}}[r] \ar[d] & F^{d} \ar[r] \ar[d] & 0 \\
0 \ar[r] & 0 \ar[r] & \ldots \ar[r] & 0 \ar[r] & \faktor{F^{d}}{Im(d_{d-1})} \ar@{=}[d] \ar[r] & 0 \\
& & & & \mathcal{H}^{d}(F^\bullet)\simeq\mathcal{O}_{\mathcal{S}},&\\
}\]
i.e. a morphism
\[\epsilon^{\natural*} p_{+/\mathcal{A}^\natural}^\natural\mathcal{P}\rightarrow\mathcal{O}_{\mathcal{S}}[-d].\]
By adjunction, one finds the expected morphism
\[p_{+/\mathcal{A}^\natural}^\natural\mathcal{P}\rightarrow\epsilon_*^\natural\mathcal{O}_{\mathcal{S}}[-d].\]


The goal is then to prove it is an isomorphism. In order to do that we will proceed as follow: First we will understand more precisely the points of $\calA^\natural$, then prove the expected isomorphism on every point of $\calA^\natural$, next we will use the coherence of the sheaves and Nakayama's lemma to prove the isomorphism on the varieties $A_i^\natural$, and finally deduce the expected isomorphism on the formal scheme $\calA$.

\underline{Step 1:} Recall that the points of $\calA^\natural$ are the elements of $Pic^\natural(\mathcal{A/S})$. One have the following lemma.
\begin{lem}

For all $\mathcal{L}\in Pic^\natural(\mathcal{A/S})$
\[H^{n}_{dR}(\mathcal{L})\simeq\left\{\begin{matrix} 0 & \text{if }\mathcal{L}\not\simeq\mathcal{O}_\mathcal{A}\\
L_{2d-n}\epsilon^{\natural*}\epsilon_*^\natural \mathcal{O}_{\mathcal{S}} & \text{if }\mathcal{L}\simeq\mathcal{O}_\mathcal{A}\end{matrix}\right.\]

\end{lem}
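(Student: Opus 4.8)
The plan is to compute the de Rham cohomology of $\mathcal{L}$ on $\mathcal{A}$ by reducing to the reductions $A_i$ and using the two ways the abelian scheme structure acts. For the case $\mathcal{L}\not\simeq\mathcal{O}_\mathcal{A}$, I would argue exactly as in the classical situation (cf. \cite{Lau}, part 2): the group structure gives, via the theorem of the square satisfied by $\mathcal{L}$, that the multiplication map $m$ pulls $\mathcal{L}$ back to $p_1^\flat\mathcal{L}\otimes p_2^\flat\mathcal{L}$, and using the Künneth formula together with the fact that $m$, $p_1$, $p_2$ are all proper one gets a relation that forces $H^\bullet_{dR}(\mathcal{L})$ to be a module over itself with a "doubling" property; since $\mathcal{L}$ is a nontrivial element of $Pic^\natural(\mathcal{A}/\mathcal{S})$, its de Rham cohomology must vanish. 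Concretely, I would first establish this over each $A_i/S_i$ (where all the standard $\mathcal{D}$-module machinery over a noetherian affine base applies, since $V_i$ has characteristic zero or not — but here the key point is only the group structure and Künneth, not Kashiwara), obtaining $H^n_{dR}(\mathcal{L}_i)=0$ for all $i$ and all $n$ when $\mathcal{L}_i\not\simeq\mathcal{O}_{A_i}$, and then pass to the limit: since $R\Gamma$ and $R\varprojlim$ commute and $DR(\mathcal{A}/\mathcal{S})=R\varprojlim DR(A_i/S_i)$ (as in the proof of lemma \ref{cohom}), the vanishing of the $H^n_{dR}(\mathcal{L}_i)$ together with the triviality of the relevant $R^1\varprojlim$ terms gives $H^n_{dR}(\mathcal{L})=0$.

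For the case $\mathcal{L}\simeq\mathcal{O}_\mathcal{A}$, the de Rham complex $DR_{\mathcal{A}/\mathcal{S}}(\mathcal{O}_\mathcal{A})$ is just $DR(\mathcal{A}/\mathcal{S})$, so $H^n_{dR}(\mathcal{O}_\mathcal{A})=H^n_{dR}(\mathcal{A}/\mathcal{S})$, which by lemma \ref{cohom} is $\bigwedge^n Lie(\mathcal{A}^\natural)$. It remains to identify this with $L_{2d-n}\epsilon^{\natural*}\epsilon^\natural_*\mathcal{O}_\mathcal{S}$. Here I would use that $\epsilon^\natural:\mathcal{S}\hookrightarrow\mathcal{A}^\natural$ is a section of the smooth morphism $\mathcal{A}^\natural\to\mathcal{S}$ of relative dimension $2d$, hence a regular immersion with conormal sheaf $\epsilon^{\natural*}\Omega^1_{\mathcal{A}^\natural/\mathcal{S}}$, a free $\mathcal{O}_\mathcal{S}$-module of rank $2d$ whose dual is $Lie(\mathcal{A}^\natural)$; the (derived) self-intersection formula / Koszul resolution then gives $L_j\epsilon^{\natural*}\epsilon^\natural_*\mathcal{O}_\mathcal{S}\simeq\bigwedge^j(\epsilon^{\natural*}\Omega^1_{\mathcal{A}^\natural/\mathcal{S}})^\vee=\bigwedge^j Lie(\mathcal{A}^\natural)$, and with $j=2d-n$ and Poincaré duality $\bigwedge^n Lie(\mathcal{A}^\natural)\simeq\bigwedge^{2d-n}Lie(\mathcal{A}^\natural)$ (the top exterior power being free of rank one) this matches. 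One checks the identification is canonical by tracing through the cup-product structures, or simply observes both sides are the cohomology of the Koszul complex on $Lie(\mathcal{A}^\natural)$.

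I expect the main obstacle to be the vanishing statement in the nontrivial case, and specifically making the "doubling" argument work uniformly over the non-reduced bases $S_i$ and then controlling the passage to the limit. Over a field one argues that $H^\bullet_{dR}(\mathcal{L})$ is either zero or one-dimensional by a character/semisimplicity argument; over $V_i$ one must instead work with the theorem-of-the-square isomorphism $m^\flat\mathcal{L}\simeq p_1^\flat\mathcal{L}\otimes p_2^\flat\mathcal{L}$ directly, apply $R(m)_{+}$ resp. $R(p_1\times p_2)_+$, invoke the Künneth formula for $\mathcal{D}$-module pushforward along the projections (which holds here since we only need it for smooth proper projections, not arbitrary maps), and deduce that the graded $V_i$-module $H^\bullet_{dR}(\mathcal{L}_i)$ satisfies $H^\bullet\otimes_{V_i}H^\bullet\simeq H^\bullet$ (up to shift) as a module over the Hopf-algebra-like structure coming from $m$; nontriviality of $\mathcal{L}_i$ in $Pic^\natural$ then forces $H^\bullet=0$. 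The subtlety is that $V_i$ is not a field, so one cannot immediately conclude from dimension count; instead one uses that the translation action of the $A_i$-points permutes the $H^\bullet_{dR}$ of the various translates $t_x^*\mathcal{L}_i$, and that $\mathcal{L}_i$ being a nontrivial point of $A_i^\natural$ means no translate is trivial, forcing the support of the cohomology sheaves to be empty. Once each $H^n_{dR}(\mathcal{L}_i)$ vanishes, the limit argument is routine: the transition maps are surjective (indeed the whole complex is a projective limit with surjective transitions), so all $R^1\varprojlim$ terms vanish and $H^n_{dR}(\mathcal{L})=\varprojlim H^n_{dR}(\mathcal{L}_i)=0$.
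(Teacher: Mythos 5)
Your treatment of the case $\mathcal{L}\simeq\mathcal{O}_\mathcal{A}$ is essentially the paper's: lemma \ref{cohom} gives $H^n_{dR}(\mathcal{A}/\mathcal{S})\simeq\bigwedge^n Lie(\mathcal{A}^\natural)$, and the Koszul resolution of $\epsilon^\natural_*\mathcal{O}_{\mathcal{S}}$ identifies this with $L_{2d-n}\epsilon^{\natural*}\epsilon^\natural_*\mathcal{O}_\mathcal{S}$. That half is fine.

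The vanishing case has a genuine gap. The Künneth/theorem-of-the-square "doubling" argument is indeed the engine the paper uses, but it is an induction on $n$ whose base case $H^0_{dR}(\mathcal{L})=0$ you never establish, and the doubling relation alone cannot force vanishing: a graded module concentrated in degree $0$ and idempotent under tensor product satisfies it. In the paper, the inductive step reads off, from the two Künneth decompositions of $H^n_{dR}(m^*\mathcal{L})$, the identity $0\simeq H^n_{dR}(\mathcal{L})\otimes H^0_{dR}(\mathcal{O}_\mathcal{A})$ \emph{only because} all $H^k_{dR}(\mathcal{L})$ with $k<n$, and in particular $H^0$, are already known to vanish. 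The base case is precisely where a genuinely $p$-adic input enters: the paper passes to $\mathcal{L}_\Q$ on the generic fiber, invokes Kedlaya's structure result for modules with integrable connection to show $H^0_{dR}(\mathcal{L}_\Q)=0$ (otherwise $\mathcal{L}_\Q$ would be trivialized by a horizontal section), and then uses that $H^0_{dR}(\mathcal{L})$ is simultaneously torsion and torsion-free. Your substitute — translation-invariance and "support of the cohomology sheaves" — does not apply here: $H^\bullet_{dR}(\mathcal{L})$ is a $V$-module, not a sheaf on $\mathcal{A}$, and for $\mathcal{L}\in Pic^\natural$ the theorem of the square gives $t_x^*\mathcal{L}\simeq\mathcal{L}$ for every point $x$, so "no translate is trivial" carries no information beyond the hypothesis.

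Your reduction strategy is also structurally wrong for this half of the lemma. You propose to prove $H^n_{dR}(\mathcal{L}_i)=0$ for each $i$ and then take $\varprojlim$. But a nontrivial $\mathcal{L}$ may well reduce to the trivial element of $Pic^\natural(A_i/S_i)$ for small $i$ (a nonzero $\mathcal{S}$-point of $\mathcal{A}^\natural$ can vanish modulo $\pi^i$), in which case $H^0_{dR}(\mathcal{L}_i)\neq 0$; and even for nontrivial $\mathcal{L}_i$ the finite-level vanishing is a characteristic-$p$ statement that fails in general (horizontal sections of $d+\omega$ abound in characteristic $p$). This is why the paper argues directly on the formal scheme and its generic fiber rather than level by level.
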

\begin{proof}




Take $\mathcal{L}\in Pic^\natural(\mathcal{A/S})$ such that $\mathcal{L}\not\simeq\mathcal{O}_\mathcal{A}$. Denote $H^0=H^0_{dR}(\mathcal{L}_\Q)$, which is then a $K=Frac(V)$-vector space and denote $r$ its dimension. It is known (\cite{Kedlaya}, lemma 5.1.5) that $r=0$ or $r=1$ and there is an exact sequence
\[0\rightarrow \mathcal{O}_{\mathcal{A},\Q}\otimes_K H^0 \rightarrow \mathcal{L}_\Q \rightarrow \mathcal{L}' \rightarrow 0,\]
where $\mathcal{L}'$ is a coherent $\mathcal{O}_{\mathcal{A},\Q}$-module with an integrable connection.

If we denote by $sp:\calA_K\rightarrow\calA$ the specialization morphism, it is known that the functors $sp^*$ and $sp_*$ achieve an equivalence of categories between the coherent $\mathcal{O}_{\mathcal{A},\Q}$-modules and the coherent $\mathcal{O}_{\mathcal{A}_K}$-modules. As settled in \cite{Coh_rig_1} (proposition 2.2.3) $\mathcal{L}'$ is then locally free of finite rank. Denote $r'$ its rank.
Hence, as $\mathcal{L}_\Q$ is invertible, $r+r'=1$. However, $r'\neq0$ because if not there will be an isomorphism $\mathcal{L}_\Q\simeq\mathcal{O}_{\mathcal{A},\Q} \otimes H^0 \simeq\mathcal{O}_{\mathcal{A},\Q}$. Hence $r'=1$ and $r=0$, then $H^0=0$.

\noindent We then know that $H^0_{dR}(\mathcal{L}_\Q)=0$, hence $H^0_{dR}(\mathcal{L})$ is of torsion. However, as $H^0_{dR}(\mathcal{L})\subset H^0(\mathcal{L})$ and none of the sections of $\calL$ can be of torsion over $\calA$ (because locally $\mathcal{L}\simeq\mathcal{O}_\mathcal{A}$), $H^0_{dR}(\mathcal{L})$ is torsion-free. Hence $H^0_{dR}(\mathcal{L})=0$.

Assume then there is an integer $n>0$ such that $H^n_{dR}(\mathcal{L})\neq0$ and consider the smallest one. As $\mathcal{L}\in Pic^\natural(\mathcal{A/S})$, it verifies the theorem of the square: $m^*\mathcal{L}\simeq p_1^*\mathcal{L} \otimes p_2^*\mathcal{L}$. Moreover, there exists an isomorphism $\varphi=(m,\langle-1\rangle\circ p_2)$ of $\calA\times\calA$ ($\varphi^2=id$) such that $m=p_1\circ\varphi$, hence $\varphi^*m^*\mathcal{L}\simeq p_1^*\mathcal{L}$. The K\"unneth formula then settles
\[H^n_{dR}(m^*\mathcal{L})\simeq\bigoplus_{k+l=n} H^k_{dR}(\mathcal{L})\otimes H^l_{dR}(\mathcal{L})\simeq\bigoplus_{k+l=n} H^k_{dR}(\mathcal{L})\otimes H^l_{dR}(\mathcal{O}_{\mathcal{A}}).\]
As for all $k<n$, $H^k_{dR}(\mathcal{L})=0$, one deduces that
\[0\simeq H^n_{dR}(\mathcal{L})\otimes H^0_{dR}(\mathcal{O}_{\mathcal{A}})\simeq H^n_{dR}(\mathcal{L}),\]
which is a contradiction. Hence for all $n$, $H^n_{dR}(\mathcal{L})=0$.

Let now assume $\mathcal{L}\simeq\mathcal{O}_\mathcal{A}$.
The goal is to compute

\[H^n_{dR}(\mathcal{A/S})\simeq R\varprojlim H^n_{dR}(A_i/S_i) \simeq R\varprojlim \bigwedge^n Lie(A_i^\natural).\]

In order to do that, denote by $\mathcal{I}$ the ideal associated to the injection $\epsilon^\natural_i$, $\mathcal{I}=V(x_1,...,x_{2d})$ and $R=\bigoplus_{i=1}^{2d}\calO_{A_i^\natural}e_i$ and consider the Koszul complex
\[0\rightarrow\bigwedge^{2d} R\rightarrow\ldots\rightarrow\bigwedge^2 R\rightarrow R\rightarrow\mathcal{O}_{A^\natural_i}\rightarrow\epsilon^\natural_* \calO_{S_i}\rightarrow0,\]
where the arrow $R\rightarrow\mathcal{O}_{A^\natural_i}$ is given by $e_n\mapsto x_n$. Applying $Hom(\bullet,\epsilon_{i*}^\natural\calO_{S_i})$, one find the complex whose elements are the $\epsilon_{i*}^\natural\calO_{S_i}\otimes\left(\bigwedge^n\faktor{\mathcal{I}}{\mathcal{I}^2}\right)^\vee$, hence the $\epsilon_{i*}^\natural\calO_{S_i}\otimes\bigwedge^n Lie(A_i^\natural)$, and whose differentials are zero. Hence, the $n$-th cohomology group of this complex is
\[\calE xt^n(\epsilon_{i*}^\natural\calO_{S_i},\epsilon_{i*}^\natural\calO_{S_i})\simeq\epsilon_{i*}^\natural\calO_{S_i}\otimes\bigwedge^n Lie(A_i^\natural),\]

then $H^n_{dR}(A_i/S_i)\simeq \mathcal{H}^n((\epsilon_i^\natural)^{-1}R\mathcal{H}om_{\calO_{A_i}}(\epsilon_{i*}^\natural\calO_{S_i},\epsilon_{i*}^\natural\calO_{S_i}))$. Hence, by adjunction
\[H^n_{dR}(A_i/S_i)\simeq\mathcal{H}^n(R\mathcal{H}om_{\calO_{S_i}}(L\epsilon^{\natural*}_{i}\epsilon_{i*}^\natural\calO_{S_i},\calO_{S_i}))\simeq L_{2d-n}\epsilon^{\natural*}_i\epsilon^\natural_{i*}\calO_{S_i}.\]

As these isomorphisms are compatible to the reductions modulo $\pi^i$, one has the expected isomorphism
\[H^n_{dR}(\mathcal{A/S})\simeq R\varprojlim H^n_{dR}(A_i/S_i) \simeq L_{2d-n}\epsilon^{\natural*}\epsilon^\natural_*\mathcal{O}_\mathcal{S}.\]

\end{proof}

\underline{Step 2:} As the goal is to use Nakayama's lemma on the schemes $A_i^\natural$, we will need to have an isomorphism with restriction of each closed point of each $A_i^\natural$. We will see that such a point can be viewed as a $Spf(W(k(x)))$-point of $\calA^\natural$. This is the reason of the hypothesis in the following lemma.

\begin{lem}
Let $k$ be a finite extension of the residue field of $V$, $W(k)$ its ring of Witt vectors and $\mathcal{S}'=Spf(W(k))$.
For all $\mathcal{S}'$-point of $\mathcal{A}^\natural$, if $\iota:\mathcal{S}'\hookrightarrow \mathcal{A}^\natural$ is the injection, then

\[L\iota^* p^\natural_{+/\mathcal{A}^\natural}\mathcal{P}\simeq L\iota^*\epsilon_*^\natural\mathcal{O}_\mathcal{S}[-d].\]
\end{lem}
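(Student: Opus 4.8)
The plan is to reduce, by base change, to a statement about de Rham cohomology over the base $\mathcal{S}'$, and then to feed in the two cohomological lemmas already proved. Set $\mathcal{A}_{\mathcal{S}'}=\mathcal{A}\times_\mathcal{S}\mathcal{S}'$, a formal abelian scheme over $\mathcal{S}'$; since $\iota$ exhibits $\mathcal{S}'$ as an $\mathcal{S}$-scheme, the base change of $p^\natural\colon\mathcal{A}\times\mathcal{A}^\natural\to\mathcal{A}^\natural$ along $\iota$ is the projection $\pi'\colon\mathcal{A}_{\mathcal{S}'}\to\mathcal{S}'$, sitting in a cartesian square with $id_\mathcal{A}\times\iota\colon\mathcal{A}_{\mathcal{S}'}\hookrightarrow\mathcal{A}\times\mathcal{A}^\natural$ on top. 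On underlying $\mathcal{O}$-modules, $p^\natural_{+/\mathcal{A}^\natural}$ is $Rp^\natural_*$ applied to $\Dhz_{\mathcal{A}^\natural\leftarrow\mathcal{A}\times\mathcal{A}^\natural/\mathcal{A}^\natural}\otimes_{\Dhz_{\mathcal{A}\times\mathcal{A}^\natural/\mathcal{A}^\natural}}\mathcal{P}$, a complex with flat terms once the transfer bimodule is replaced by its Spencer complex, so the base change formula for $\mathcal{O}$-modules yields
\[L\iota^* p^\natural_{+/\mathcal{A}^\natural}\mathcal{P}\simeq R\pi'_*\,L(id_\mathcal{A}\times\iota)^*\bigl(\Dhz_{\mathcal{A}^\natural\leftarrow\mathcal{A}\times\mathcal{A}^\natural/\mathcal{A}^\natural}\otimes_{\Dhz_{\mathcal{A}\times\mathcal{A}^\natural/\mathcal{A}^\natural}}\mathcal{P}\bigr).\]

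Next I would identify the pulled-back complex with a de Rham complex, exactly as in the proof of the preceding lemma: one has $\Dhz_{\mathcal{A}^\natural\leftarrow\mathcal{A}\times\mathcal{A}^\natural/\mathcal{A}^\natural}\otimes\mathcal{P}\simeq DR_{\mathcal{A}\times\mathcal{A}^\natural/\mathcal{A}^\natural}(\mathcal{P})[d]$, whose terms $\Omega^k_{\mathcal{A}\times\mathcal{A}^\natural/\mathcal{A}^\natural}\otimes\mathcal{P}$ are flat, so $L(id_\mathcal{A}\times\iota)^*$ is computed termwise; since $(id_\mathcal{A}\times\iota)^*\Omega^k_{\mathcal{A}\times\mathcal{A}^\natural/\mathcal{A}^\natural}\simeq\Omega^k_{\mathcal{A}_{\mathcal{S}'}/\mathcal{S}'}$ and the differentials are those of the connection, this complex is $DR_{\mathcal{A}_{\mathcal{S}'}/\mathcal{S}'}\bigl((id_\mathcal{A}\times\iota)^{!(\mathcal{S}',\mathcal{A}^\natural)}\mathcal{P}\bigr)[d]$. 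By Property \ref{repres}, $(id_\mathcal{A}\times\iota)^{!(\mathcal{S}',\mathcal{A}^\natural)}\mathcal{P}$ is the element $\mathcal{L}_\iota\in Pic^\natural(\mathcal{A}_{\mathcal{S}'}/\mathcal{S}')$ classified by $\iota$ under the identification $\mathcal{A}^\natural(\mathcal{S}')=Pic^\natural(\mathcal{A}_{\mathcal{S}'}/\mathcal{S}')$, whence
\[L\iota^* p^\natural_{+/\mathcal{A}^\natural}\mathcal{P}\simeq R\pi'_*\,DR_{\mathcal{A}_{\mathcal{S}'}/\mathcal{S}'}(\mathcal{L}_\iota)[d].\]

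On the target side, base changing along $\iota$ the square that defines the zero section identifies $L\iota^*\epsilon^\natural_*\mathcal{O}_\mathcal{S}$ with $L\varepsilon^*\varepsilon_*\mathcal{O}_{\mathcal{S}'}$, where $\varepsilon\colon\mathcal{S}'\hookrightarrow\mathcal{A}^\natural\times_\mathcal{S}\mathcal{S}'$ is the zero section, when $\iota$ is that zero section, and with $0$ when $\iota$ is disjoint from it; by the Koszul computation carried out in the proof of the preceding lemma this is the formal complex with homology $\bigwedge^{\ell}Lie(\mathcal{A}^\natural\times_\mathcal{S}\mathcal{S}')^\vee$ in degree $-\ell$. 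It then remains to compare the two sides, that is, to check that $L\iota^*$ of the morphism $p^\natural_{+/\mathcal{A}^\natural}\mathcal{P}\to\epsilon^\natural_*\mathcal{O}_\mathcal{S}[-d]$ constructed earlier is an isomorphism. For this I would apply the lemma on $H^n_{dR}(\mathcal{L})$ over the base $\mathcal{S}'$ — its proof uses only that $W(k)$ is a complete discrete valuation ring of residue characteristic $p$ — which computes $R\pi'_*DR_{\mathcal{A}_{\mathcal{S}'}/\mathcal{S}'}(\mathcal{L}_\iota)$: when $\mathcal{L}_\iota$ is non-trivial that lemma handles the left-hand complex directly, and when $\mathcal{L}_\iota$ is trivial the identification $H^n_{dR}(\mathcal{A}_{\mathcal{S}'}/\mathcal{S}')\simeq L_{2d-n}\varepsilon^*\varepsilon_*\mathcal{O}_{\mathcal{S}'}$ matches it, after the shift $[d]$, with $L\iota^*\epsilon^\natural_*\mathcal{O}_\mathcal{S}[-d]$, using $\bigwedge^{\ell}E^\vee\simeq\bigwedge^{2d-\ell}E$ for a free $\mathcal{O}_{\mathcal{S}'}$-module $E$ of rank $2d$. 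The passage from an isomorphism on cohomology to one of complexes is legitimate because every $\mathcal{O}_{\mathcal{S}'}$-module occurring is free, respectively cyclic, so the complexes split.

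The difficulty I expect to be the real obstacle is this last comparison for the points $\iota$ that reduce onto the zero section without being equal to it: there both sides carry non-trivial $\pi$-torsion and one must verify that it agrees on the nose, which is precisely where the failure of Hodge--de Rham degeneration in residue characteristic $p$, and the absence of Kashiwara's theorem, would otherwise bite. Controlling this torsion is the technical heart of the argument, and it is also what makes the specific base $\mathcal{S}'=Spf(W(k))$ — rather than an arbitrary complete local base — the appropriate one for the subsequent Nakayama argument over the schemes $A_i^\natural$.
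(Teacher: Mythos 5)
Your argument is essentially the paper's: base change along $\iota$ identifies $L\iota^* p^\natural_{+/\mathcal{A}^\natural}\mathcal{P}$ with $\pi_+\mathcal{L}_\iota\simeq H^\bullet_{dR}(\mathcal{L}_\iota)[d]$ for the sheaf $\mathcal{L}_\iota\in Pic^\natural(\mathcal{A}_{\mathcal{S}'}/\mathcal{S}')$ classified by $\iota$ via Property \ref{repres}, and the preceding lemma on $H^n_{dR}(\mathcal{L})$ then gives vanishing when $\mathcal{L}_\iota$ is non-trivial and the Koszul answer $L_{2d-n}\epsilon^{\natural*}\epsilon^\natural_*\mathcal{O}_{\mathcal{S}}$ when it is trivial, exactly as in the paper. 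Concerning the obstacle you flag at the end: the left-hand side vanishes for \emph{every} non-trivial $\mathcal{L}_\iota$ (not only for $\iota$ disjoint from the zero section), so the only delicate case is that $L\iota^*\epsilon^\natural_*\mathcal{O}_\mathcal{S}$ is a non-zero torsion complex when $\iota\neq\epsilon^\natural$ but $\iota_0=\epsilon^\natural_0$; the paper passes over this with a bare support argument, and since the subsequent Nakayama step only requires lifts $\iota$ of closed points with either $\iota_0\neq\epsilon^\natural_0$ or $\iota=\epsilon^\natural$ itself, your worry, though legitimate as a criticism of the lemma's stated generality, does not need to be resolved for the application.
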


\begin{proof}

First of all, remark that the morphism
\[L\iota^* p^\natural_{+/\mathcal{A}^\natural}\mathcal{P}\rightarrow L\iota^*\epsilon_*^\natural\mathcal{O}_\mathcal{S}[-d].\]
is well defined, as it is the pullback by $\iota$ of the morphism $p^\natural_{+/\mathcal{A}^\natural}\mathcal{P}\rightarrow\epsilon_*^\natural\mathcal{O}_\mathcal{S}[-d]$ previously built.

Also note that, using some base change in the diagram
\[\xymatrix{
\mathcal{S}' \ar[r] \ar@{^(->}[d]_{{\epsilon^\natural}'} & \mathcal{S} \ar@{^(->}[d]^{\epsilon^\natural} \\
{\mathcal{A}^\natural}' \ar[r]_{{p^\natural}'} & \mathcal{A}^\natural
}\]
one can assume $\calS'=\calS$.

In that case, as $\epsilon^\natural_* \mathcal{O}_\mathcal{S}$ has support in $\epsilon^\natural(\mathcal{S})$, it is sufficient to prove that
\[\left\{
\begin{matrix}
\forall \iota:\mathcal{S}\hookrightarrow\mathcal{A}^\natural,\iota\neq\epsilon^\natural & L\iota^*p^\natural_{+/\mathcal{A}^\natural}\mathcal{P}=0. \\
& L\epsilon^{\natural*}p^\natural_{+/\mathcal{A}^\natural}\mathcal{P}\simeq L\epsilon^{\natural*}\epsilon^\natural_* \mathcal{O}_\mathcal{S} [-d].
\end{matrix}
\right.\]

The base change formula in the diagram
\[\xymatrix@=40pt{
\mathcal{A} \ar@{^(->}^{id_\mathcal{A}\times\iota}[r] \ar_\pi[d] & \mathcal{A}\times \mathcal{A}^\natural \ar^{p^\natural}[d]\\
\mathcal{S} \ar@{^(->}_{\iota}[r] & \mathcal{A}^\natural
}\]
gives the isomorphism
\[L\iota^* p^\natural_{+/\mathcal{A}^\natural}\mathcal{P}\simeq\pi_+ L(id_\mathcal{A}\times\iota)^*\mathcal{P}.\]
However, $L(id_\mathcal{A}\times\iota)^*\mathcal{P}=(id_\mathcal{A}\times\iota)^*\mathcal{P}$ is by definition of $\mathcal{P}$ the element of $Pic^\natural(\mathcal{A/S})$ which corresponds to $\iota$. Denote by $\mathcal{L}$ this element. As $\pi_+\mathcal{L}\simeq H^\bullet_{dR}(\mathcal{L})[d]$, the previous lemma states that
\[L\iota^* p^\natural_{+/\mathcal{A}^\natural}\mathcal{P}\simeq\left\{\begin{matrix}
0 &\text{if }\mathcal{L}\not\simeq\mathcal{O}_\mathcal{A} & \text{i.e. } \iota\neq \epsilon^\natural. \\
L\epsilon^{\natural*}\epsilon^\natural_* \mathcal{O}_\mathcal{S}[-d] & \text{if }\mathcal{L}\simeq\mathcal{O}_\mathcal{A} & \text{i.e. } \iota=\epsilon^\natural.
\end{matrix}\right.\]
Hence the isomorphism
\[L\iota^*p^\natural_{+/\mathcal{A}^\natural}\mathcal{P}\simeq L\iota^*\epsilon^\natural_* \mathcal{O}_\mathcal{S}[-d].\]


\end{proof}

\underline{Step 3:} To conclude, we will use the coherence of the sheaves to obtain an isomorphism on the schemes $A_i^\natural$. In order to do so, remark that the morphism
\[p^\natural_{+/\mathcal{A}^\natural}\mathcal{P}\rightarrow\epsilon_*^\natural\mathcal{O}_\mathcal{S}[-d]\]
induces a morphism
\[p^\natural_{+/A_i^\natural}\mathcal{P}_i\rightarrow\epsilon_{i*}^\natural\mathcal{O}_{S_i}[-d]\]
on the reductions modulo $\pi^i$.
Fix a closed point $x$ of $A_i^\natural$, which is then a closed point of $A_0^\natural$. Denote $k$ its residue field, which is then a finite extension of the residue field of $V$. As $k$ is finite, it is perfect and one can take its Witt vectors ring $W(k)$ and denote by $\omega$ its uniformizer. One then has the following diagram
\[\xymatrix@=40pt{
Spec\left(\faktor{W(k)}{\omega}\right) \ar@{^(->}[r] \ar@{^(->}[d] \ar@{^(->}[rd] & A_0^\natural \ar@{^(->}[d]\\
Spec\left(\faktor{W(k)}{\omega^2}\right) \ar@{^(-->}[r]_f & A_1^\natural
}\]
with $f$ that exists by formal smoothness. Doing an immediate recurrence, one find the closed point $x$ can be lift up on all the varieties $A_i^\natural$. Denote by $\iota:Spf(W(k))\hookrightarrow\mathcal{A}^\natural$ this lift. The previous lemma then settles that the sheaves $p^\natural_{+/\mathcal{A}^\natural}\mathcal{P}$ and $\epsilon_*^\natural\mathcal{O}_\mathcal{S}[-d]$ restricted to $\iota$ are isomorphic. Hence, after reduction modulo $\pi^i$, the sheaves $p^\natural_{i+/A_i^\natural}\mathcal{P}_i$ and $\epsilon_{i*}^\natural\mathcal{O}_{S_i}[-d]$ are isomorphic in restriction to $x$, and that for all $x$.

\noindent As the sheaves $p^\natural_{+/A_i^\natural}\mathcal{P}_i$ and $\epsilon_{i*}^\natural\mathcal{O}_{S_i}[-d]$ are coherents on $A_i^\natural$ and are isomorphic on each closed point of $A_i^\natural$, Nakayama's lemma allows to claim that one has an isomorphism
\[p^\natural_{+/A_i^\natural}\mathcal{P}_i\simeq\epsilon_{i*}^\natural\mathcal{O}_{S_i}[-d],\]
and that for all $i$.
As all these isomorphisms came from a same morphism on the formal scheme $\mathcal{A}^\natural$, it has to be an isomorphism too, i.e.
\[p^\natural_{+/\mathcal{A}^\natural}\mathcal{P}\simeq\epsilon_*^\natural\mathcal{O}_\mathcal{S}[-d].\]

\end{proof}


\begin{prop}

\[\mathcal{F}\circ\mathcal{F}^\natural\simeq\langle-1\rangle^{\natural*}\bullet[-d]:D^b_{qcoh}(\mathcal{O}_{\mathcal{A}^\natural})\rightarrow D^b_{qcoh}(\mathcal{O}_{\mathcal{A}^\natural}).\]

\end{prop}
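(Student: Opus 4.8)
The plan is to recognise $\calF\circ\calF^\natural$ as a Fourier--Mukai transform of an explicit kernel and to compute that kernel by means of the lemma just established. Since $\calF^\natural=\Phi^{\calA\leftarrow\calA^\natural}_{\calP}$ and $\calF=\Phi^{\calA\rightarrow\calA^\natural}_{\calP}$, the composition formula for Fourier--Mukai transforms (the case in which the middle formal scheme, here the proper formal abelian scheme $\calA$, is proper) gives $\calF\circ\calF^\natural\simeq\Phi_{\calR}$, with $\calR\in D^b_{qcoh}(\calO_{\calA^\natural\times\calA^\natural})$ obtained on $\calA^\natural\times\calA\times\calA^\natural$ by pulling back the two Poincar\'e kernels (one of them with the factors swapped), tensoring them over $\calO$, and pushing forward along the projection forgetting the $\calA$--factor, relatively to $\calA^\natural\times\calA^\natural$.

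To compute $\calR$ I would first identify the tensor product occurring in it. Since $\calP$ is the universal element of $Pic^\natural(\calA\times\calA^\natural/\calA^\natural)$ and $\calA^\natural$ is a group scheme, Yoneda's lemma applied to the addition morphism $\nu\colon\calA^\natural\times\calA^\natural\to\calA^\natural$ gives the additivity of the Poincar\'e sheaf in the dual variable: writing $q_1,q_2\colon\calA\times\calA^\natural\times\calA^\natural\to\calA\times\calA^\natural$ for the projections retaining the first, resp.\ the second, copy of $\calA^\natural$, one has $(id_\calA\times\nu)^{!}\calP\simeq q_1^{!}\calP\otimes q_2^{!}\calP$ (the analogue, for the addition map, of Property \ref{repres}). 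After reordering the factors this identifies, as a $\Dhz$--module relative to $\calA^\natural\times\calA^\natural$, the tensor product in $\calR$ with $(id_\calA\times\nu)^{!}\calP$ read on $\calA\times\calA^\natural\times\calA^\natural$; so $\calR\simeq\mathrm{pr}_{+/\calA^\natural\times\calA^\natural}(id_\calA\times\nu)^{!}\calP$, where $\mathrm{pr}\colon\calA\times\calA^\natural\times\calA^\natural\to\calA^\natural\times\calA^\natural$ is the projection. The square with $id_\calA\times\nu$ and $\nu$ downwards and $\mathrm{pr}$ and $p^\natural$ across is cartesian with the horizontal arrows projections, so the base change results (Properties \ref{chgmt_base} and \ref{chgmt_base_rel}, together with the compatibility \ref{compat}) apply and give $\calR\simeq\nu^{!}\bigl(p^\natural_{+/\calA^\natural}\calP\bigr)$ --- that is, $\calR$ is the pullback by the addition of $\calA^\natural$ of $p^\natural_{+/\calA^\natural}\calP$, as announced at the start of this subsection.

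Now the lemma above applies: $p^\natural_{+/\calA^\natural}\calP\simeq\epsilon^\natural_*\calO_\calS[-d]$. As $\nu$ is smooth, hence flat, $L\nu^{*}=\nu^{*}$ and $\nu^{*}\epsilon^\natural_*\calO_\calS\simeq\iota_*\calO_{\calA^\natural}$ with no higher Tor, where $\iota=(id_{\calA^\natural},\langle-1\rangle^\natural)\colon\calA^\natural\hookrightarrow\calA^\natural\times\calA^\natural$ is the graph of $\langle-1\rangle^\natural$, i.e. the (smooth) closed subscheme $\nu^{-1}(\epsilon^\natural(\calS))$. Hence $\calR\simeq\iota_*\calO_{\calA^\natural}[-d]$. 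Finally, when the kernel is the structure sheaf of the graph of a morphism, the associated Fourier--Mukai transform is that morphism's direct image: applying the projection formula to the closed immersion $\iota$ and using $p\circ\iota=id_{\calA^\natural}$ and $q\circ\iota=\langle-1\rangle^\natural$ (with $p,q$ the two projections of $\calA^\natural\times\calA^\natural$), one gets $\Phi_{\calR}(\calE^\cdot)\simeq\langle-1\rangle^\natural_{*}\calE^\cdot[-d]=\langle-1\rangle^{\natural*}\calE^\cdot[-d]$, the last equality because $\langle-1\rangle^\natural$ is an involution. This gives $\calF\circ\calF^\natural\simeq\langle-1\rangle^{\natural*}\bullet[-d]$.

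I expect the main obstacle to be the second step: making the identification of the tensor product precise with the correct relative $\Dhz$--structures, choosing the right form of the (relative versus non-relative) base change and projection formulas to pass from $\mathrm{pr}_{+/\calA^\natural\times\calA^\natural}(id_\calA\times\nu)^{!}\calP$ to $\nu^{!}(p^\natural_{+/\calA^\natural}\calP)$, and checking that all the de Rham shifts --- ultimately the $[-d]$ coming from $p^\natural_{+/\calA^\natural}$ --- are consistent with those built into the composition formula. Once the lemma is in hand, the remainder is formal.
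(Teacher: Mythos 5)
Your proposal is correct and follows essentially the same route as the paper: the composition formula, identification of the kernel with $m^{\natural*}p^\natural_{+/\calA^\natural}\calP$ via the theorem of the square in the $\calA^\natural$-variable plus base change, the key lemma $p^\natural_{+/\calA^\natural}\calP\simeq\epsilon^\natural_*\calO_\calS[-d]$, and the projection formula applied to the structure sheaf of the anti-diagonal. The only (harmless) divergence is how you establish $\pi_{12}^*\calP\otimes\pi_{23}^*\calP\simeq m_{13}^*\calP$: you deduce it directly from the representability of $Pic^\natural$ (the universal correspondence being a group isomorphism, so the addition morphism corresponds to the tensor product of the two pullbacks), whereas the paper transports the question to the classical Poincar\'e sheaf $\calP^\vee$ on $\calA\times\calA^\vee$ via $\theta$ and biduality.
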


\begin{proof}

The main result of the previous section states that
\[\mathcal{F}\circ\mathcal{F}^\natural=\Phi^{\mathcal{A}\rightarrow\mathcal{A}^\natural}_\mathcal{P}\circ\Phi^{\mathcal{A}\leftarrow\mathcal{A}^\natural}_\mathcal{P}\simeq\Phi_\mathcal{R},\]
with $\mathcal{R}=\pi_{13+/(1,3)}(\pi_{12/1}^{!(3)}\mathcal{P}\otimes\pi_{23/3}^{!(1)}\mathcal{P})$, where the $\pi_{ij}$ are the projections of $\mathcal{A}^\natural\times\mathcal{A}\times\mathcal{A}^\natural$.

\noindent In order to understand the kernel $\calR$, we will consider the sheaf
\[\calQ=\pi_{12}^*\mathcal{P}\otimes\pi_{23}^*\mathcal{P}\otimes m_{13}^*\calP^{-1},\]
with $m_{13}:\mathcal{A}^\natural\times\mathcal{A}\times\mathcal{A}^\natural\rightarrow\calA\times\calA^\natural$ being defined as $m_{13}(a,b,c)=(b,m^\natural(a,c))$.
As the kernel $\calR$ doesn't have any structure of $\calD$-module, one can forget the structure of $\calD$-module of $\calP$. Remember also that $\varphi^*\calP$ is defined as $\varprojlim\varphi_i^*\calP_i$ (there is no need to derive the functors here as the three morphisms are flat).
Using the property \ref{corresp_P}, one finds the isomorphism
\[\mathcal{Q}\simeq\pi_{12}^*(id_\calA\times \theta)^*\mathcal{P}^\vee\otimes\pi_{23}^*(id_\calA\times \theta)^*\mathcal{P}^\vee\otimes m_{13}^*(id_\calA\times \theta)^*\mathcal{P}^{\vee-1},\]
where $\calP^\vee$ is the Poincar\'e sheaf on $\calA\times\calA^\vee$. Denote by $\pi_{ij}^\vee$ the projections of $\calA^\vee\times\calA\times\calA^\vee$ and $m_{13}^\vee$ the multiplication of the first and third factors. Then
\[\mathcal{Q}\simeq(\theta\times id_\calA\times\theta)^*(\pi_{12}^{\vee*}\mathcal{P}^\vee\otimes\pi_{23}^{\vee*}\mathcal{P}^\vee\otimes m_{13}^{\vee*}\mathcal{P}^{\vee-1}).\]
As ${\calA^\vee}^\vee\simeq\calA$, $\calP^\vee$ is also the Poincar\'e sheaf on $\calA^\vee\times{\calA^\vee}^\vee$ and
\[\pi_{12}^{\vee*}\mathcal{P}^\vee\otimes\pi_{23}^{\vee*}\mathcal{P}^\vee\otimes m_{13}^{\vee*}\mathcal{P}^{\vee-1}\simeq\mathcal{O}_{\calA^\vee\times \calA\times \calA^\vee}.\]
This is just saying that $\calP^\vee$ verifies the theorem of the square for the $\calO_{\calA^\vee\times{\calA^\vee}^\vee}$-modules. Hence, as $\calO_{\calA^\natural\times\calA\times\calA^\natural}$-modules,
\[\mathcal{Q}\simeq\mathcal{O}_{\calA^\natural\times \calA\times \calA^\natural},\]
hence
\[\pi_{12}^*\mathcal{P}\otimes\pi_{23}^*\mathcal{P}\simeq m^*_{13}\calP.\]

\noindent Note that one can actually use the seesaw principle for $\calD$-modules (property \ref{D-bascule}) to prove that, as $\calD_{\calA^\natural\times \calA\times \calA^\natural/\calA^\natural\times \calA^\natural}$-modules, $\pi_{12/1}^{!(3)}\mathcal{P}\otimes\pi_{23/3}^{!(1)}\mathcal{P}\simeq m_{13/3}^{!(1)}\mathcal{P}$.

Rewriting the $\calO_{\calA^\natural\times \calA^\natural}$-module $\calR$ as $\pi_{13+/(1,3)}m_{13}^*\mathcal{P}$ and using the base change formula, one can easily find that 
$m^{\natural*}p_{+/\mathcal{A}^\natural}\mathcal{P}$.

The previous lemma allows to find an isomorphism $\mathcal{R}\simeq m^{\natural*}\epsilon_*^\natural\mathcal{O}_\mathcal{S}[-d]$. Then,
considering the commutative diagram
\[\xymatrix{
\Gamma^\natural \ar@{^{(}->}_{\tilde{\epsilon}^\natural}[d] \ar^{\tilde{m}^\natural}[r] & S \ar@{^{(}->}^{\epsilon^\natural}[d]\\
A^\natural\times A^\natural \ar_{m^\natural}[r] & A^\natural
}\]
where $\Gamma^\natural$ is the anti-diagonal of $\mathcal{A}^\natural$ (i.e. the elements of the form $(a,-a)$),
another base change leads to the isomorphism $\mathcal{R}\simeq \tilde{\epsilon}^\natural_*\mathcal{O}_{\Gamma^\natural}[-d]$.

Hence, the composition $\mathcal{F}\circ\mathcal{F}^\natural$ is the Fourier-Mukai transform of kernel $\tilde{\epsilon}^\natural_*\mathcal{O}_{\Gamma^\natural}[-d]$. The projection formula then states that 
\[\mathcal{F}\circ\mathcal{F}^\natural(\mathcal{E}^\cdot)\simeq(p_1\circ\tilde{\epsilon}^\natural)_*(p_2\circ\tilde{\epsilon}^\natural)^*\mathcal{E}^\cdot[-d],\]
with $p_1,p_2:\mathcal{A}^\natural\times\mathcal{A}^\natural\rightarrow\mathcal{A}^\natural$ the projections. As there is a commutative diagram of isomorphisms
\[\xymatrix{
&\Gamma^\natural\ar_{p_1\circ\tilde{\epsilon}^\natural}[ldd]\ar^{p_2\circ\tilde{\epsilon}^\natural}[rdd] & \\
&&\\
\mathcal{A}^\natural\ar_{\langle-1\rangle^\natural}[rr]&&\mathcal{A}^\natural
}\]
one finally concludes that
\[\mathcal{F}\circ\mathcal{F}^\natural(\mathcal{E}^\cdot)\simeq\langle-1\rangle^*\mathcal{E}^\cdot[-d].\]

\end{proof}

The following result is a direct consequence.

\begin{theo}\label{Dhz_equiv}

$\mathcal{F}$ is essentially surjective and $\mathcal{F}^\natural$ is faithfull.

\end{theo}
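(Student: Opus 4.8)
The plan is to deduce both assertions purely formally from the preceding proposition, which identifies $\mathcal{F}\circ\mathcal{F}^\natural$ with the endofunctor $\langle-1\rangle^{\natural*}\bullet[-d]$ of $D^b_{qcoh}(\mathcal{O}_{\mathcal{A}^\natural})$. The first thing I would record is that this endofunctor is an \emph{autoequivalence}: $\langle-1\rangle^\natural$ is an automorphism of $\mathcal{A}^\natural$, so $\langle-1\rangle^{\natural*}$ is an equivalence, the shift $[-d]$ is an equivalence, and since $\langle-1\rangle^\natural$ is an involution the composite $\langle-1\rangle^{\natural*}\bullet[d]$ is a quasi-inverse (both round-trip composites being isomorphic to the identity, using $(\langle-1\rangle^\natural\circ\langle-1\rangle^\natural)^*=\mathrm{id}$).

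For the essential surjectivity of $\mathcal{F}$: given $\mathcal{E}^\cdot\in D^b_{qcoh}(\mathcal{O}_{\mathcal{A}^\natural})$, set $\mathcal{G}^\cdot=\mathcal{F}^\natural(\langle-1\rangle^{\natural*}\mathcal{E}^\cdot[d])\in D^b_{qcoh}(\Dhz_{\calA/\calS})$. Then, applying the previous proposition and the involutivity of $\langle-1\rangle^\natural$,
\[\mathcal{F}(\mathcal{G}^\cdot)=(\mathcal{F}\circ\mathcal{F}^\natural)(\langle-1\rangle^{\natural*}\mathcal{E}^\cdot[d])\simeq\langle-1\rangle^{\natural*}\langle-1\rangle^{\natural*}\mathcal{E}^\cdot[d][-d]\simeq\mathcal{E}^\cdot,\]
so $\mathcal{E}^\cdot$ lies in the essential image of $\mathcal{F}$, which is therefore essentially surjective.

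For the faithfulness of $\mathcal{F}^\natural$: an equivalence of categories is in particular faithful, so $\mathcal{F}\circ\mathcal{F}^\natural$ is faithful; and whenever a composite $G\circ F$ of functors is faithful, so is $F$ itself (if $F(u)=F(v)$ then $G(F(u))=G(F(v))$, hence $u=v$). Applying this with $F=\mathcal{F}^\natural$ and $G=\mathcal{F}$ gives that $\mathcal{F}^\natural$ is faithful. There is no genuine obstacle in this argument: the theorem is a formal corollary of the proposition on $\mathcal{F}\circ\mathcal{F}^\natural$, the only point requiring a word being that $\langle-1\rangle^\natural$ is an involution, which is immediate from the group scheme structure of $\mathcal{A}^\natural$.
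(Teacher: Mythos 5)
Your argument is correct and is exactly the formal deduction the paper intends when it calls the theorem a ``direct consequence'' of the proposition $\mathcal{F}\circ\mathcal{F}^\natural\simeq\langle-1\rangle^{\natural*}\bullet[-d]$: since that composite is an autoequivalence, every object is in the essential image of $\mathcal{F}$ via the explicit preimage you exhibit, and $\mathcal{F}^\natural$ is faithful because a functor whose post-composition with another functor is faithful is itself faithful. Nothing is missing.
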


		\nocite{Rot96_err}

	\bibliographystyle{alpha}
	\bibliography{Biblio}

\end{document}